\newcommand{\for}{\hspace{0.5pt} :\hspace{0.5pt}} 
\renewcommand{\subset}{\subseteq}
\DeclareMathOperator{\ima}{im}
\newcommand{\clos}[1]{\overline{#1}}
\DeclareMathOperator{\frontier}{fr}
\newcommand{\real}{\mathbb{R}}
\newcommand{\nat}{\mathbb{N}}
\renewcommand{\leq}{\leqslant}
\renewcommand{\geq}{\geqslant}
\newcommand{\abs}[1]{\left\lvert #1 \right\rvert}
\newcommand{\norm}[1]{\left\lVert #1 \right \rVert}
\newcommand{\length}[1]{\left\lvert #1 \right\rvert}
\newcommand{\grad}{\nabla}
\DeclareMathOperator{\rankaux}{rk}
\newcommand{\rank}[2]{\rankaux_{#1}(#2)}
\newcommand{\tangent}[2]{T_{#1}(#2)}
\newcommand{\formser}[2]{#1 \llbracket #2 \rrbracket}
\DeclareMathOperator{\supp}{supp}
\newcommand{\genser}[2]{#1 \llbracket #2^* \rrbracket}
\newcommand{\struct}{\real_H}
\newcommand{\graph}[1]{\Gamma(#1)}
\newcommand{\functionalg}[2]{\mathcal{A}_{#1, #2}}
\newcommand{\germalgnotH}[1]{\mathcal{A}_{#1}}
\newcommand{\germalg}[1]{\mathcal{A}_{#1}(H)}
\newcommand{\polydisk}[2]{I_{#1, #2}}
\DeclareMathOperator{\taylor}{\mathbf{T}}
\theoremstyle{definition}
\newtheorem{defn}{Definition}[section]
\newtheorem{remark}[defn]{Remark}
\theoremstyle{plain}
\newtheorem{thm}[defn]{Theorem}
\newtheorem{prp}[defn]{Proposition}
\newtheorem{cor}[defn]{Corollary}
\newtheorem{lem}[defn]{Lemma}
\author{R\'emi Gu\'enet}
\title{A quasianalytic class with weakly smooth germs}
\begin{document}

\maketitle
\tableofcontents
\pagebreak

\section{Introduction}

Recall that an expansion \(\langle M; <, \dots \rangle\) of a dense
linear order is called
\textbf{o-minimal} when the definable subsets of \(M\) are exactly the finite
unions of points and intervals. By the Tarski-Seidenberg Theorem, the
structure \(\real_\text{alg} = \langle \real; <, +, -, \cdot, 0, 1
\rangle\) has quantifier elimination. It is easy to deduce that it is
also model-complete and o-minimal.

Given a power series \(f \in \formser \real {X_1, \dots, X_n}\) that
converges in a neighborhood of \(I^n\), where \(I = [-1, 1]\), we
define
\begin{equation*}
  \widetilde f(x) =
  \begin{cases}
    f(x), &x \in I^n \\
    0, &x \not \in I^n
  \end{cases}
\end{equation*}
and call the function \(\widetilde f\) a \textbf{restricted analytic
function}. Consider \(\real_\text{an}\), the structure
\(\real_\text{alg}\) augmented by a function symbol for each
restricted analytic function. In
\cite{vddries-macintyre-marker-restricted-analytic-exponentiation},
the authors give a
quantifier elimination result for the structure \(\langle
\real_\text{an}; \exp, \log \rangle\) and they deduce that the
structure \(\real_{\text{an}, \exp} = \langle \real_\text{an};
\exp\rangle\) is o-minimal.

In particular, the structure \(\real_\text{an}\) is
o-minimal. Actually, this is already observed in
\cite{vddries-generalization-tarski-seidenberg}, where this
result is obtained as a consequence of Gabrielov's Theorem of the
complement proven in \cite{gabrielov-complement}. This method, which
does not rely on
quantifier elimination, is adapted in
\cite{vdDries-Speissegger-generalized-power-series} in the following
way. A \textbf{generalized series} \(f \in \genser \real X\), where \(X = (X_1,
\dots, X_m)\), is a formal sum
\[f = \sum_{\alpha} c_\alpha X^\alpha,\]
where \(\alpha\) runs over the set \([0, \infty)^m\) and such that
there are well ordered sets \(S_1, \dots, S_m \subset [0, \infty)\) with
\(\{\alpha \for c_\alpha \neq 0\} \subset S_1 \times\dots\times
S_m\). The set \(\supp (f) \coloneq \{\alpha \for c_\alpha \neq 0\}\) is
called the \textbf{support} of \(f\). The
condition on the support guarantees that these series can be added and
multiplied in the usual way. If \(f \in \genser \real X\), we say that
it converges when the series
\[\sum_{\alpha \in \supp (f)} c_\alpha  x^\alpha\]
converges absolutely on \(I^m\). In this case, we define
\begin{equation*}
  \widetilde f(x) =
  \begin{cases}
    f(x), &x \in I^m \\
    0, &x \not \in I^m
  \end{cases}
\end{equation*}
as in the analytic case. By adapting Gabrielov's Theorem of the
complement and by giving a suitable monomialization procedure, the
authors of \cite{vdDries-Speissegger-generalized-power-series} show
that the expansion of \(\real_\text{alg}\) by function symbols for
each \(\widetilde f\) is o-minimal. \\

Later on, in \cite{rolin-speissegger-wilkie-denjoy-carleman-classes},
the authors generalize this method to series that do not necessarily
converge. For each \(r = (r_1, \dots, r_n) \in (0, \infty)^n\), define
the \textbf{polydisk} \(\polydisk n r\) as
\[\polydisk n r = (-r_1, r_1) \times \dots \times (-r_n, r_n).\]
Then, for each \(n \geq 0\) and each \(r \in (0, \infty)^n\) consider
an algebra \(\functionalg n r\) of functions \(f \colon \polydisk n r
\to \real\) and let \(\germalgnotH n\) be the algebra of germs at
\(0\) of functions in \(\functionalg n r\). We assume that each germ
in \(\germalgnotH n\) is a \(C^\infty\) germ, meaning that it has a
\(C^\infty\) representative. Then, there is a morphism \(\taylor\colon
\germalgnotH n \to \formser \real X\) that takes a germ \(f \in
\germalgnotH n\) and that returns its Taylor series at \(0\). We say
that the algebras \(\germalgnotH n\) are \textbf{quasianalytic} when
this morphism is injective. In practice, if \(f \in \germalgnotH n\),
we can view \(f\) as a sum of the series \(\taylor(f)\), even when
this series does not converge in any neighborhood of \(0\). Under this
assumption, and other technical hypotheses about the algebras
\(\functionalg n r\), it is then shown in
\cite{rolin-speissegger-wilkie-denjoy-carleman-classes} that the
structure \(\langle \real_\text{alg}; (\widetilde f)_{f \in
  \mathcal{A}}\rangle\) is o-minimal and model-complete, where
\begin{equation*}
  \widetilde f(x) =
  \begin{cases}
    f(x), &x \in \polydisk n r \\
    0, &x \not \in \polydisk n r
  \end{cases}
\end{equation*}
for \(f \in \functionalg n r\).

This same method can also be applied with generalized series, although
more care is necessary in this latter case. An example of this method
is given in \cite{kaiser-rolin-speissegger-non-resonant} while Rolin
and Servi provide a proof of o-minimality and model-completeness under
general assumptions in \cite{rolin-servi-monomialization}. The authors
of the latter paper also remark that most known o-minimal structures
satisfy the hypotheses of their theorem. 
Furthermore, in \cite[Section
4]{rolin-servi-monomialization}, the authors also use o-minimality in the
form of the o-minimal Preparation Theorem, which is proven in 
\cite{vddries-speissegger-preparation}, to derive
a quantifier elimination result. However,
due to the hypotheses made, all the structures to which these results
apply necessarily have \(C^\infty\) cell-decomposition. \\

In \cite{legal-rolin-not-c-infty}, Le Gal and Rolin
show that there
exists an o-minimal expansion of the real field which does not admit
\(C^\infty\) cell-decomposition. Recall that a germ \(f\) of
function \(\real^n \to \real\) at \(0\) is called \textbf{weakly
  \(C^\infty\)} if it is
\(C^p\) for all \(p \geq 0\). This means that it has \(C^p\)
representatives for every \(p \geq 0\) but these can be defined on
smaller and smaller neighborhoods as \(p \to \infty\). Thus, in
particular, this does not imply that \(f\) has a \(C^\infty\)
representative. However, it still makes sense to talk about the Taylor
series of \(f\) since \(f\) has partial derivatives of every
order. Therefore, the concept of quasianalyticity still makes sense in
this context and the authors use the same ideas as the ones in
\cite{rolin-servi-monomialization} in order to prove o-minimality. The
main difference is that they cannot prove a global version of the
Fiber Cutting Lemma so that they have to use a local version of the
result instead. 

Firstly, let us recall some notations from
\cite{legal-rolin-not-c-infty}. We let \(\mathcal{W}_n\) be the
algebra of weakly \(C^\infty\) germs at \(0\) of functions \(\real^n
\to \real\). 

\begin{defn}
  \label{defn:smallest-algebras}
  Let \(H \colon \real \to \real\) be a function whose germ at \(0\)
  is weakly \(C^\infty\). We define \(\mathcal{A}(H) =
  (\mathcal{A}_n(H))\) to be the smallest family of subalgebras
  \(\mathcal{A}_n(H) \subset \mathcal{W}_n\) such that
  \begin{itemize}
  \item The germ of \(H\) belongs to \(\mathcal{A}_1(H)\) and polynomial
    germs in \(n\) variables are in \(\mathcal{A}_n(H)\).
  \item If the germ \(f \in \germalg n\) vanishes on the hyperplane
    \(x_i = 0\) then the germ which continuously extends
    \(\frac{f}{x_i}\) is in \(\germalg n\). 
  \item If \(f \in \germalg m\) and \(g_1, \dots, g_m \in
    \germalg n\) are such that \(g_i(0) = 0\) for every \(1 \leq i \leq
    m\), then \(f(g_1, \dots, g_m) \in  \germalg n\). 
  \item Let \(f \in \germalg n\) be such that \(f(0) = 0\) and
    \(\frac{\partial f}{\partial x_n}(0) \neq 0\). Then, the germ \(\varphi
    \in \mathcal{W}_{n-1}\) such that \(f(x', \varphi(x')) = 0\) is in
    \(\germalg{n-1}\). 
  \end{itemize}
\end{defn}

In \cite[Section 2]{legal-rolin-not-c-infty}, Le Gal and Rolin prove
the following theorem.

\begin{thm}
  \label{thm:properties-H}
  There exists a function \(H \colon \real \to \real\) which satisfies
  the following conditions.
  \begin{itemize}
  \item The germ of \(H\) at \(0\) is weakly \(C^\infty\) but not
    \(C^\infty\).
  \item The restriction of \(H\) to the complement of any neighborhood
    of \(0\) is piecewise given by finitely many polynomials.
  \item The algebras \(\germalg n\) are quasianalytic, meaning that
    the morphism \(\taylor \colon \germalg n \to \formser \real X\)
    which sends a germ to its Taylor series at \(0\) is injective. 
  \end{itemize}
\end{thm}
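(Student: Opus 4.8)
The plan is to construct \(H\) explicitly and then verify the three properties in turn, the third being by far the hardest. I would fix a strictly decreasing sequence \(1 = r_0 > r_1 > r_2 > \cdots\) tending to \(0\), a strictly increasing sequence of positive integers \((p_k)\), and a formal power series \(\hat H = \sum_{n \geq 0} a_n X^n\) with \(\hat H(0) = 0\), of radius of convergence \(0\), and whose coefficients \((a_n)\) are algebraically independent over \(\mathbb{Q}\); this genericity is exactly what the quasianalyticity argument will consume. On each interval \((r_{k+1}, r_k]\) I would set \(H := P_k\) for a polynomial \(P_k\) whose Taylor coefficients at \(0\) agree with those of \(\hat H\) up to an order \(d_k \to \infty\), such that \(P_k - P_{k-1}\) vanishes at \(r_k\) to order exactly \(p_k\), and of degree large enough that these finitely many linear conditions on the coefficients of \(P_k\) are compatible; a great deal of freedom remains in the \(P_k\), and this is where the algebraically independent \(a_j\) get inserted. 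Finally \(H\) is extended to all of \(\real\) by polynomials, so that outside any neighbourhood of \(0\) it coincides with finitely many polynomials on finitely many pieces.

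The first two bullet points then reduce to routine bookkeeping. Given \(p\), pick \(\delta_p > 0\) so small that every junction \(r_k\) inside \((-\delta_p, \delta_p)\) satisfies \(p_k \geq p\): on this neighbourhood the pieces glue to a \(C^p\) function, and since the low-order Taylor coefficients of the \(P_k\) agree with \(\hat H\), the function extends \(C^p\) across \(0\) with \(H^{(j)}(0) = j!\,a_j\) for \(j \leq p\); hence the germ of \(H\) is weakly \(C^\infty\) with \(\taylor(H) = \hat H\). On the other hand \(H\) is not \(C^\infty\) on any neighbourhood of \(0\), since such a neighbourhood contains junctions \(r_k\) with \(p_k\) arbitrarily large at which, by construction, \(H\) fails to be \(C^{p_k+1}\). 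The piecewise-polynomial statement away from \(0\) holds by construction.

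For quasianalyticity I would prove by induction on the construction of a germ \(f \in \germalg n\) that \(\taylor(f) = 0\) implies \(f = 0\), taking as inductive hypothesis the same statement for every germ of strictly smaller complexity, for a suitable well-founded complexity measure under which division by a coordinate and composition with polynomials do not raise complexity. Being the Taylor-series map, \(\taylor\) is compatible with each of the four generating operations and their formal analogues. The base cases are polynomial germs (immediate) and \(H\) itself, where \(\taylor(H) = \hat H \neq 0\). The division step is handled by \(\taylor(f/x_i) = \taylor(f)/X_i\). For the implicit-function step, if \(f(0) = 0\), \(\partial f/\partial x_n(0) \neq 0\) and the solution \(\varphi\) has \(\taylor(\varphi) = 0\), then substituting into \(f(x', \varphi(x')) = 0\) gives \(\taylor(f)(X', 0) = 0\), so \(\taylor(f) = X_n \hat u\) with \(\hat u(0) \neq 0\); therefore \(\taylor(f(x', 0)) = 0\), whence \(f(x', 0) = 0\) by the inductive hypothesis, whence \(f = x_n u\) with \(u\) a unit, and therefore \(\varphi = 0\).

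The composition step, \(f = F(g_1, \dots, g_m)\) with \(\taylor(f) = \taylor(F)\bigl(\taylor(g_1), \dots, \taylor(g_m)\bigr) = 0\), is the crux, and I expect it to be the main obstacle. One may assume every \(g_i \neq 0\), so by the inductive hypothesis every \(\taylor(g_i) \neq 0\). If \(\taylor(g_1), \dots, \taylor(g_m)\) are algebraically dependent over \(\real\), a minimal relation among them, put in Weierstrass form and solved for one variable, produces an algebraic-function germ \(\psi\) — obtained from a polynomial by the implicit-function operation, hence in the family and of strictly smaller complexity — with \(\taylor(g_j) = \taylor\bigl(\psi(g_i : i \neq j)\bigr)\); the inductive hypothesis yields the genuine identity \(g_j = \psi(g_i : i \neq j)\), and substituting it into \(f\) reduces the number of arguments. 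Iterating, one is left with the case that \(\taylor(g_1), \dots, \taylor(g_m)\) are algebraically independent, and there a leading-term (valuation) argument exploiting the algebraic independence of the coefficients \(a_n\) of \(\hat H\) — to preclude accidental cancellation in \(\taylor(F)\bigl(\taylor(g_1), \dots, \taylor(g_m)\bigr)\) — reduces the problem to \(\taylor(F) = 0\), whence \(F = 0\) by the inductive hypothesis and \(f = 0\). Making the dependence reduction rigorous (in particular keeping all auxiliary germs strictly simpler) and proving the leading-term lemma are the technical heart of the theorem; once the induction closes, \(\taylor\) is injective on every \(\germalg n\), which is the third bullet, and the proof is complete.
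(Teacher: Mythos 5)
First, a point of order: the present paper does not prove this theorem at all. It is stated as a quotation of the main construction of \cite[Section 2]{legal-rolin-not-c-infty} and is used as a black box in everything that follows, so there is no in-paper proof to compare your argument with; your proposal has to be judged as a reconstruction of Le Gal and Rolin's argument. The construction of \(H\) and the first two bullet points are broadly plausible, although the ``routine bookkeeping'' conceals a real constraint: matching the Taylor coefficients of \(P_k\) at \(0\) with those of \(\hat H\) up to order \(d_k\) does not by itself give \(C^p\)-differentiability \emph{at} \(0\); you also need \(P_k(x)-\sum_{j\leq p}a_jx^j=o(x^p)\) uniformly over the intervals \((r_{k+1},r_k]\), which is a quantitative condition on the sizes of the remaining free coefficients of \(P_k\) relative to \(r_k\), not merely a matching of low-order jets.

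The genuine gap is in the composition step of the quasianalyticity induction, exactly where you locate the crux. Your plan is to reduce to the case where \(\taylor(g_1),\dots,\taylor(g_m)\) are algebraically independent over \(\real\), and then to deduce by a leading-term argument that \(\taylor(F)(\taylor(g_1),\dots,\taylor(g_m))=0\) forces \(\taylor(F)=0\). That implication is false for formal series: algebraic independence does not make the substitution homomorphism injective. For instance \(G_1=X\) and \(G_2=e^X-1\) are algebraically independent over \(\real\) in \(\formser \real X\), yet the nonzero series \(F(Y_1,Y_2)=1+Y_2-\sum_{k\geq 0}Y_1^k/k!\) satisfies \(F(G_1,G_2)=0\). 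The obstruction is a formal relation of infinite order, invisible to any analysis of leading terms, and ruling out such relations among Taylor series of germs of \(\germalg n\) is essentially the whole content of quasianalyticity restated, not a lemma one can quote. The preliminary reduction is also unsound as written: a minimal polynomial relation \(P(\taylor(g_1),\dots,\taylor(g_m))=0\) can be converted into an implicit-function germ of the class only when some \(\partial P/\partial Y_j\) is a unit at the origin, which already fails for \(\taylor(g_1)=X^2\), \(\taylor(g_2)=X^3\) (relation \(Y_1^3-Y_2^2\)); in general one needs a desingularization or monomialization step before the implicit function operation of Definition \ref{defn:smallest-algebras} applies. The proof in \cite{legal-rolin-not-c-infty} does not pass through a substitution-injectivity statement of this kind; it rests on the piecewise polynomial structure of \(H\) away from \(0\) together with a genericity argument over the countably many possible constructions of germs, using the algebraic independence of the coefficients to exclude a nonzero flat germ construction by construction. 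As it stands, your third bullet is not proved.
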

Then, \cite[Section 3]{legal-rolin-not-c-infty} is dedicated to
proving that the structure \(\struct = \langle \real_\text{alg}; H,
H', H'', \dots \rangle\),
which is the expansion of the real ordered field with function symbols
for the derivatives of any order order of \(H\), is o-minimal and
model-complete. This structure clearly does not have \(C^\infty\)
cell-decomposition so that the result of
\cite{rolin-servi-monomialization} does not apply in this
case. Despite that, the proofs of o-minimality and model-completeness
given in \cite[Section 3]{rolin-servi-monomialization} 
and in \cite[Section 3]{legal-rolin-not-c-infty} are rather similar
and rely heavily on a Fiber Cutting Lemma.
The main difference between the two is that some global results in
\cite{rolin-servi-monomialization} have to be stated locally in
\cite{legal-rolin-not-c-infty}. However, the local methods used in
\cite{legal-rolin-not-c-infty} involve restricting to small
neighborhoods of \(0\) often and,
as shown in an example given in the Appendix to this text, their proof
of Lemma 3.7 sometimes constructs sets with empty germ meaning that
such sets cannot always be restricted.
The goal of the
present document is to provide a framework that lets us restrict to
smaller neighborhoods of \(0\) at will, and hence produce an
alternative proof of the o-minimality and model-completeness of
\(\struct\).  

In a
forthcoming paper, we weaken the hypotheses of
\cite{rolin-servi-monomialization} to prove a very general
o-minimality and quantifier elimination result for structures
generated by generalized quasianalytic classes of weakly \(C^\infty\)
germs. In particular, given \(H\) as in Theorem
\ref{thm:properties-H}, there is an expansion of \(\struct\) with
existentially definable functions that satisfies these
weaker hypotheses, so that we obtain a quantifier elimination theorem
for an o-minimal structure without smooth cell-decomposition. \\

We now describe the plan of the present document. We fix
\(H \colon \real \to 
\real\) a function as in Theorem \ref{thm:properties-H}.
Roughtly, a \textbf{\(\Lambda\)-set} \(A \subset \real^n\) is a
bounded and quantifier-free definable subset (see Definition \ref{defn:H-basic}).
A \textbf{sub-\(\Lambda\)-set} is defined as a projection of a
\(\Lambda\)-set. Thus, in particular, sub-\(\Lambda\)-sets are
existentially definable. Sub-\(\Lambda\)-sets are easily seen to be
stable under finite unions and intersections as well as
projections. Provided we show that sub-\(\Lambda\)-sets are stable
under set-theoretic difference, we can establish a bijective
correspondence between \(\struct\)-definable sets and
sub-\(\Lambda\)-sets. Model-completeness is then immediate while
o-minimality follows if we can prove that \(\Lambda\)-sets have
finitely many connected components since projections preserve this
property. 

In Section \ref{sec:simple-sub-sets}, we introduce the notion of
simple sub-\(\Lambda\)-sets which is at the heart of this new approach
to the proof of o-minimality. In Theorem
\ref{basic-simple-sub-lambda-set}, we show that germs of
quantifier-free definable sets at \(0\) (namely germs of \(H\)-sets,
see Definition \ref{defn:H-basic}) have representatives that are also
simple sub-\(\Lambda\)-sets. This is done by showing inductively on
the structure of germs \(f \in \germalg n\) (see Definition
\ref{defn:smallest-algebras}) that the graph of \(f\) 
has a representative which is also a simple
sub-\(\Lambda\)-set. 

In Section \ref{sec:parametrization}, we prove a monomialization
result (Theorem \ref{*-monomialization}) from which we derive several
parametrization results by manifolds for \(H\)-sets. In
particular, we show in Corollary \ref{cor:lambda-finite-components}
that \(\Lambda\)-sets have finitely many connected components which
will later yield o-minimality. Monomialization is a process by which
we can replace a germ \(f \in \germalg n\) with a family of normal
germs, namely germs of the form \(x^\alpha u(x)\) where \(u
\in \germalg n\) is such that \(u(0) \neq 0\). The main advantage of
such germs is that their sign depends only on the signs of the various
coordinates.

In Section \ref{cutting-fibers}, we show that every
sub-\(\Lambda\)-set is simple (see Theorem
\ref{sub-sets-are-simple}). Combining this with Lemma
\ref{global-parametrization}, we see that the hypotheses of
Gabrielov's Theorem of the Complement \cite[Theorem
2.7]{vdDries-Speissegger-generalized-power-series} are
satisfied. Finally, in Theorem \ref{thm:omin}, we establish a
bijection between \(\struct\)-definable sets and sub-\(\Lambda\)-sets,
allowing us to deduce model-completeness and o-minimality at once.
The main novelty of this approach has to do with the Fiber Cutting
Lemma, of which there are now two
versions, a local one (Proposition \ref{local-fiber-cutting}) and a
global one (Corollary \ref{global-fiber-cutting}). The classical proof
relies on \(\mathcal{A}\)-analyticity \cite[Definition
1.10]{rolin-servi-monomialization}, which is a global assumption that
does not hold in the present case.
Finally, the importance of simple sub-\(\Lambda\)-sets is stressed
throughout Paragraph \ref{par:global-fiber-cutting}, as they can be
replaced seamlessly by \(\Lambda\)-sets. \\

\section{A local property of \(H\)-basic sets}
\label{sec:simple-sub-sets}

Throughout the text, we fix a function \(H\) such as in Theorem
\ref{thm:properties-H} and we let \(\struct\) be the structure
\(\langle \real_\text{alg}; H, H', H'', \dots \rangle\). Notice that
\(\struct\) and \(\langle \real_\text{alg}; H\rangle\) have
the same definable sets so that one is o-minimal if and only if the
other is as well. However, \(\struct\) is model-complete while
\(\langle \real_\text{alg}; H \rangle\) is not. We start by recalling
a few definitions.

\begin{defn}
  \label{defn:H-basic}
  Firstly, if \(r = (r_1, \dots, r_n) \in \real^n\), then we define
  the \textbf{polydisk}
  \(\polydisk n r = (-r_1, r_1) \times \dots \times (-r_n,
  r_n)\). Given a continuous function
  \(f \colon \polydisk n r \to \real\), we write \(f \in \germalg n\)
  to mean that the germ at \(0\) of \(f\) is in \(\germalg n\). A
  subset \(A \subset \polydisk n r\) is said to be
  \textbf{\(H\)-basic} if there are
  \(g_0, g_1, \dots, g_q \in \germalg n\) such that
  \[A = \{x \in \polydisk n r \for g_0(x) = 0, g_1(x) > 0, \dots,
    g_q(x) > 0\}.\] An \textbf{\(H\)-set} is a finite union of
  \(H\)-basic sets. A set \(A \subset \real^n\) is said to be
  \textbf{\(H\)-semianalytic} when, for any \(a \in \real^n\), there
  is a polydisk \(\polydisk n r\) such that
  \((A-a) \cap \polydisk n r\) is an \(H\)-set. If
  \(A \subset \real^n\) is both \(H\)-semianalytic and bounded we say
  that \(A\) is a \textbf{\(\Lambda\)-set}. Given integers
  \(n, k \geq 0\), we let \(\Pi_n \colon \real^{n+k} \to \real^n\) be
  the projection on the first \(n\) variables. Then, if
  \(A \subset \real^n\) is such that there are an integer \(k \geq 0\)
  and a \(\Lambda \)-set \(A' \subset \real^{n+k}\) with
  \(\Pi_n(A') = A\), we say that \(A\) is a
  \textbf{sub-\(\Lambda\)-set}.
\end{defn}
In the defininition of \(H\)-basic sets, the condition \(g_0, \dots, q_q \in
\germalg n\) is really only a condition on the germs of \(g_0, \dots,
g_q\) at \(0\). Thus, \(H\)-basic sets can be quite arbitrary away
from \(0\), meaning that the definition of \(H\)-basic sets is local
around \(0\). On the
contrary, if \(A \subset \real^n\) is \(H\)-semianalytic
then we have information on the germ of \(A\) at every point \(a \in
\real^n\). Thus, this latter definition is global. As a first step
towards proving o-minimality, we would like to show that there is a
link between the two notions. In \cite{rolin-servi-monomialization},
such a link is obtained by forcing the functions \(g_0, g_1, \dots,
g_q\) that appear in the definition of \(H\)-basic sets to
be \(H\)-analytic, meaning that their germ at every point \(a \in
\polydisk n r\) is the translation at \(a\) of a germ in
\(\germalg n\) \cite[Definition
1.10]{rolin-servi-monomialization}. However, this approach is valid
only as long as each germ in \(\germalg n\) has an
\(H\)-analytic representative. But, such a representative would have
partial derivatives of every order showing that it is
\(C^\infty\). Since \(H\) has no \(C^\infty\)
representative, this method is unavailable in our case. Instead, we
introduce the notion of simple sub-\(\Lambda\)-sets, which are a
special instance of sub-\(\Lambda\)-sets, and we show that, for every
\(H\)-set \(A \subset \real^n\), there exists a polydisk
\(\polydisk n r\) such that \(A \cap \polydisk n r\) is a simple
sub-\(\Lambda\)-set.

\subsection{Simple sub-\(\Lambda\)-sets}

\begin{defn}
  \label{defn:simple-sub-sets}
  A sub-\(\Lambda\)-set \(A \subset \real^n\) is called
  \textbf{simple} when it is existentially definable in \(\struct\)
  and when there
  are an integer \(k \geq 0\) and a \(\Lambda\)-set \(A' \subset
  \real^{n+k}\) such that \(\Pi_n(A') = A\) and
  the fibers of \(\Pi_n \restriction A'\) are finite, namely, for
  every \(a \in A\), the set
  \[A'_a \coloneq \{b \in \real^k \for (a, b) \in A'\}\]
  is finite.
\end{defn}

In the notation of the definition, we will show in Proposition
\ref{dimension-fibers} that
\(\dim(A) = \dim(A')\) for a reasonable definition of dimension. This,
along with the following set theoretical stability properties of simple
sub-\(\Lambda\)-sets, explain the usefulness of the
notion. 

\begin{prp}
  \label{simple-prp}
  Let \(A, B \subset \real^n\) and \(C \subset \real^m\) be
  simple sub-\(\Lambda\)-sets. Then, the sets \(A \cap B, A \cup B\)
  and \(A \times C\) are also simple sub-\(\Lambda\)-sets. If \(k \leq
  n\) and \(x \in \real^k\) then the fiber
  \[A_x = \{y \in \real^{n-k} \for (x, y) \in A\}\]
  is a simple sub-\(\Lambda\)-set. Furthermore, if \(\Pi_k
  \restriction A\) has finite fibers then 
  \(\Pi_k(A)\) is also a simple sub-\(\Lambda\)-set. Finally, if
  \(s(1), \dots, s(n) \in \{1, \dots, k\}\) then
  \[\{(x_1, \dots, x_k) \in \real^k \for (x_{s(1)}, \dots, x_{s(n)})
    \in A\}\]
  is a simple sub-\(\Lambda\)-set. 
\end{prp}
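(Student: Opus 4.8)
The plan is to verify each closure property by exhibiting, from given $\Lambda$-set witnesses with finite fibers, a new $\Lambda$-set witness with finite fibers, while checking existential definability (which is automatic since every construction below is a finite boolean combination of projections and coordinate permutations of $\Lambda$-sets, and $\Lambda$-sets are quantifier-free definable). Let me fix notation: say $A = \Pi_n(A')$ with $A' \subset \real^{n+k}$ a $\Lambda$-set having finite $\Pi_n$-fibers, $B = \Pi_n(B')$ with $B' \subset \real^{n+l}$ similarly, and $C = \Pi_m(C')$ with $C' \subset \real^{m+j}$ similarly.

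First I would handle $A \cup B$: after padding with dummy variables (embedding $A'$ and $B'$ into $\real^{n+k+l}$ by inserting $0$'s in the unused coordinates, which keeps them $\Lambda$-sets and keeps the fibers finite), the union of the two padded sets is a $\Lambda$-set projecting onto $A \cup B$ with finite fibers. For $A \cap B$, I would take the set $\{(x, b, b') : (x,b) \in A', (x,b') \in B'\} \subset \real^{n+k+l}$; this is $H$-semianalytic and bounded, hence a $\Lambda$-set, its $\Pi_n$-fiber over $x$ is (a subset of) the product of the fibers $A'_x \times B'_x$ hence finite, and its projection is exactly $\{x : x \in A \text{ and } x \in B\} = A \cap B$. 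For $A \times C$, the set $\{(x, y, b, c) : (x,b) \in A', (y,c) \in C'\} \subset \real^{(n+m)+(k+j)}$ after reordering coordinates so that $(x,y)$ come first is a $\Lambda$-set with fiber $A'_x \times C'_y$ over $(x,y)$, hence finite, projecting onto $A \times C$. The fiber $A_x$ for $x \in \real^k$ fixed: the set $\{(y, b) : (x, y, b) \in A'\} \subset \real^{(n-k)+k}$ is a $\Lambda$-set (a fiber of a $\Lambda$-set is $H$-semianalytic and bounded) with finite $\Pi_{n-k}$-fibers, and it projects onto $A_x$. The coordinate-substitution statement is handled by composing $A'$ with the linear coordinate map $(x_1, \dots, x_k) \mapsto (x_{s(1)}, \dots, x_{s(n)})$: the set $\{(x_1,\dots,x_k, b) : (x_{s(1)}, \dots, x_{s(n)}, b) \in A'\}$, intersected with a box to keep it bounded, is a $\Lambda$-set, but one must check that its $\Pi_k$-fibers are finite — they need not be, since $x_i$ for $i$ not in the image of $s$ is unconstrained. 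To fix this I would first intersect with the box $\polydisk k r$ and then note that we only need the $x_i$ for $i \notin \ima(s)$ to range over a \emph{bounded} set on which the fiber over each point of $\Pi_k$ of the image is finite; actually the cleanest fix is to not drop those variables at all — keep them as part of the base, so that the relevant set sits in $\real^{k + k}$ (the last $k$ coordinates being the original $b$ padded appropriately), giving genuinely finite fibers over $\real^k$. Finally, for $\Pi_k(A)$ when $\Pi_k \restriction A$ has finite fibers: $\Pi_k(A) = \Pi_k(\Pi_n(A')) = \Pi_k(A')$ (viewing $A' \subset \real^{k + (n-k) + k}$), and the $\Pi_k$-fiber of $A'$ over $x \in \real^k$ is $\{(y, b) : (x, y) \in A, (x,y,b) \in A'\}$, which maps finite-to-one onto $A_x$ (finite by hypothesis) via $(y,b) \mapsto y$ with finite preimages (since $A'$ has finite $\Pi_n$-fibers); hence the composite fiber is finite, and $A'$ is already a $\Lambda$-set.

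The main obstacle is the bookkeeping in the coordinate-substitution case: ensuring finiteness of fibers when some target coordinates of $s$ are omitted or repeated. The clean resolution — which I expect to spell out — is that repetition is harmless (it only cuts down to a diagonal, shrinking fibers) and omission is handled by retaining the omitted variables in the base rather than projecting them away, combined with the observation that all sets in sight are bounded so intersecting with a polydisk costs nothing. Everything else is routine padding-and-reordering together with the three stability facts about $\Lambda$-sets used implicitly throughout: $\Lambda$-sets are closed under finite unions and intersections, under taking fibers, and under bounded-domain coordinate permutations, each of which is immediate from Definition \ref{defn:H-basic} since these operations preserve being a finite union of $H$-basic sets locally and preserve boundedness.
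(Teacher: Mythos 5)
The paper itself gives no proof (it declares every clause immediate), and your verification of the union, intersection, product, fiber, and finite-fiber-projection clauses is exactly the intended routine argument and is correct. One small imprecision: existential definability is not automatic because ``\(\Lambda\)-sets are quantifier-free definable'' (the functions \(g_i \in \germalg n\) are not terms of the language); rather it is inherited from the existential definability of \(A\), \(B\), \(C\) themselves, which is part of Definition \ref{defn:simple-sub-sets}, under positive boolean combinations, substitutions and projections — which is all you use, so the argument stands.

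The one clause where you sense trouble, the substitution clause, is the one place where trouble genuinely exists, and your ``cleanest fix'' does not resolve it. If \(s\) is not surjective and \(A \neq \varnothing\), the set \(S = \{x \in \real^k \for (x_{s(1)}, \dots, x_{s(n)}) \in A\}\) is unbounded in the coordinates \(x_i\) with \(i \notin \ima(s)\). Since \(\Lambda\)-sets are bounded by definition, so are their projections, hence \(S\) is not a sub-\(\Lambda\)-set at all; in particular, keeping the omitted variables ``in the base'' cannot help, because the witnessing set \(\{(x,b) \for ((x_{s(1)},\dots,x_{s(n)}), b) \in A'\}\) is itself unbounded and therefore not a \(\Lambda\)-set, however its coordinates are split between base and fiber. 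The defect lies in the literal statement rather than in your construction: in the paper the clause is only ever invoked together with an intersection with a bounded quantifier-free definable set (as in the proof of Theorem \ref{basic-simple-sub-lambda-set}, where the substituted graphs are intersected with \(\polydisk n r \times \polydisk {q+1} s\)), and with such an intersection in place your argument — substitute into the witness \(A'\), observe that the fiber over \(x\) is \(A'_{(x_{s(1)},\dots,x_{s(n)})}\), hence finite, and that repetitions in \(s\) merely cut down to diagonals — goes through. You should either prove the clause in that corrected form or record the hypothesis that \(s\) is surjective.
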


Every part of the proposition above is immediate so we omit the proof.

\subsection{The main result}

As announced in the introduction of this section, the main theorem we
are looking to prove is the following. 

\begin{thm}
  \label{basic-simple-sub-lambda-set}
  If \(A \subset \real^n\) is an \(H\)-set then there is a
  polydisk \(\polydisk n r\) such that \(A \cap \polydisk n r\) is a
  simple sub-\(\Lambda\)-set.
\end{thm}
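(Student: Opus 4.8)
The plan is to reduce to the case of an $H$-basic set and then to proceed by induction on the construction of the germs in $\germalg n$ described in Definition \ref{defn:smallest-algebras}. Since an $H$-set is a finite union of $H$-basic sets and Proposition \ref{simple-prp} tells us that finite unions (and intersections) of simple sub-$\Lambda$-sets are again simple, and since a finite intersection of polydisks contains a polydisk, it suffices to treat a single $H$-basic set $A = \{x \in \polydisk n r \for g_0(x) = 0, g_1(x) > 0, \dots, g_q(x) > 0\}$ with $g_0, \dots, g_q \in \germalg n$. The set $A$ is obtained from the graphs $\graph{g_j} = \{(x, t) \for t = g_j(x)\}$ by intersecting with the conditions $t = 0$ (for $j = 0$) or $t > 0$ (for $j \geq 1$) and then projecting away the last coordinate; this last projection has singleton fibers over $A$, so by the stability properties in Proposition \ref{simple-prp} it is enough to prove the key claim that for every $f \in \germalg n$ there is a polydisk on which (a representative of) $\graph f$ is a simple sub-$\Lambda$-set.

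This key claim I would prove by induction on the way $f$ is generated, matching the four clauses of Definition \ref{defn:smallest-algebras}. For the base case, polynomial germs have semialgebraic graphs, hence graphs that are $\Lambda$-sets after restriction to a bounded polydisk, and these have singleton vertical fibers; the germ of $H$ has a graph which, by the second bullet of Theorem \ref{thm:properties-H}, is piecewise polynomial away from $0$, hence again semialgebraic on a punctured polydisk and therefore a $\Lambda$-set (with singleton fibers) on a full polydisk — note the graph of a function automatically has singleton vertical fibers, so the ``simple'' requirement is essentially free here and throughout. For the division clause, if $f$ vanishes on $x_i = 0$ and $h$ continuously extends $f/x_i$, then $\graph h = \{(x, t) \for t \cdot x_i = f(x)\} \cup (\text{closure along } x_i = 0)$; writing this as a quantifier-free combination of $\graph f$ and the polynomial $t x_i - s$ and projecting, one sees $\graph h$ is a simple sub-$\Lambda$-set on a suitable polydisk by Proposition \ref{simple-prp}. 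For composition $f(g_1, \dots, g_m)$ with $g_i(0) = 0$, I would realize the graph as $\{(x, t) \for \exists y\, (y_1 = g_1(x), \dots, y_m = g_m(x),\ t = f(y))\}$; the inner set is a finite intersection of (preimages under coordinate projections of) the graphs of $g_1, \dots, g_m$ and $f$, all simple sub-$\Lambda$-sets by the inductive hypothesis and Proposition \ref{simple-prp}, and since the $g_i$ vanish at $0$ the relevant values $y = (g_1(x), \dots, g_m(x))$ stay in any prescribed polydisk for $x$ small, so one may shrink the polydisk in $x$ to land inside the polydisk on which $\graph f$ is controlled; the projection eliminating $y$ has singleton fibers, so simplicity is preserved. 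For the implicit-function clause, with $f(0) = 0$ and $\partial f/\partial x_n(0) \neq 0$ and $\varphi$ defined by $f(x', \varphi(x')) = 0$, I would write $\graph \varphi = \{(x', t) \for f(x', t) = 0\} \cap \polydisk n r$ for a polydisk small enough that, by the weak $C^1$ version of the implicit function theorem, $t \mapsto f(x', t)$ is strictly monotone and hence $\graph \varphi$ is exactly this zero set; this is a simple sub-$\Lambda$-set because $\graph f$ is, intersected with $t = 0$ and projected (with singleton fibers) onto the $(x', t)$-coordinates.

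The main obstacle I anticipate is the bookkeeping of polydisks across the composition step: at each stage the inductive hypothesis only furnishes a polydisk on which the relevant graph is a simple sub-$\Lambda$-set, and in a composition $f(g_1, \dots, g_m)$ one must arrange that the image of the domain polydisk under $(g_1, \dots, g_m)$ is contained in the polydisk produced for $f$. This is where the hypotheses $g_i(0) = 0$ are essential — they guarantee continuity of $(g_1,\dots,g_m)$ at $0$ with value $0$, so after shrinking the $x$-polydisk the constraint is met — but care is needed to check that this shrinking is compatible with the intersections demanded by Proposition \ref{simple-prp} and does not disturb the finiteness of the fibers. A secondary point requiring attention is making precise the passage between a germ and a chosen representative, and the fact that ``is a simple sub-$\Lambda$-set after restriction to some polydisk'' is stable under all the operations we use — but this is exactly the content of Proposition \ref{simple-prp} together with the observation that any finite collection of polydisks has a common sub-polydisk, so no genuinely new difficulty arises there.
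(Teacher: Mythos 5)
Your overall architecture (reduce to a single $H$-basic set, then induct on the four generating clauses of Definition \ref{defn:smallest-algebras} to show graphs of germs in $\germalg n$ are simple sub-$\Lambda$-sets) matches the paper's reduction to Lemma \ref{graphs}. But the inductive statement you carry — ``$\graph f$ is a simple sub-$\Lambda$-set on some polydisk'' — is exactly the naive hypothesis the paper explicitly rejects, and the place it breaks is the one you gloss over: monomial division. The set $\{(x,t) \for t\,x_i = f(x)\}$ is not the graph of the quotient $h$; over the hyperplane $x_i = 0$ (where $f$ vanishes) it contains \emph{every} $t$, so its fibers there are infinite and it is not even the graph of a function, let alone simple. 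The piece you wave at as ``closure along $x_i = 0$'' is precisely the graph of $h\restriction\{x_i = 0\}$, and the value of $h$ there is $\frac{\partial f}{\partial x_i}(x)$ (more generally $\frac{1}{(q+1)!}\frac{\partial^{q+1}f}{\partial x_i^{q+1}}(x)$ after dividing $q+1$ times). To exhibit that piece as a simple sub-$\Lambda$-set you need to know that the graph of the \emph{derivative} of $f$ is one — which is not part of your inductive hypothesis. You also cannot appeal to ``take the topological closure,'' since closure/complement stability for sub-$\Lambda$-sets is only available after the theorem of the complement, which sits downstream of this very result.

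This is why the paper strengthens the induction to the property $(*_p)$: for every $p$, $f$ has a representative that is $C^p$ and whose partial derivatives up to order $p$ all have graphs that are simple sub-$\Lambda$-sets. With that hypothesis the division step goes through (the graph of the quotient over $x_i \neq 0$ is handled as you propose, and the graph over $x_i = 0$ is the graph of a derivative of $f$, covered by $(*_{q+1})$), at the cost of having to re-verify closure of $(*_p)$ under composition and implicit functions and to do careful polydisk/regularity bookkeeping, since weakly $C^\infty$ germs are only $C^p$ on shrinking neighborhoods. Your composition and implicit-function steps are essentially sound as far as the graphs themselves are concerned, but as written the induction does not close, so the proposal has a genuine gap rather than being a correct alternative route.
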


The theorem is a consequence of the following lemma. 

\begin{lem}
  \label{graphs}
  Given \(f \colon \polydisk n r \to \real\) such that \(f \in
  \germalg n\), there is a polydisk \(\polydisk n{r'} \subset
  \polydisk n r\) such that \(\graph{f \restriction \polydisk n{r'}}\)
  is a simple sub-\(\Lambda\)-set.
\end{lem}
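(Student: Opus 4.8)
The proof goes by induction on the construction of the germ $f \in \germalg n$ following the four clauses of Definition \ref{defn:smallest-algebras}. For each clause I must show that, if the graphs of the input germs have representatives that are simple sub-$\Lambda$-sets (after shrinking the polydisk), then so does the graph of the output germ. The base case covers $H$ itself and polynomial germs: for a polynomial $P$, the graph $\{(x,t) \in \polydisk{n}{r} \times \real \for t = P(x)\}$ is, after intersecting with a bounded box in the $t$-direction, an $H$-basic set, hence a $\Lambda$-set, hence trivially a simple sub-$\Lambda$-set (the identity projection has singleton fibers). For $H$ itself one uses the second property in Theorem \ref{thm:properties-H}: away from $0$, $H$ is piecewise polynomial, so on a punctured polydisk its graph is an $H$-set; near $0$ one invokes that the germ of $H$ lies in $\germalg 1$ and argues as in the general inductive step. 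I would organize the bookkeeping so that at every stage I carry along a $\Lambda$-set $A' \subset \real^{n+k+1}$ with $\Pi_{n+1}(A') = \graph{f}$ and finite fibers, together with the witnessing existential definition.

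For the division clause (second bullet), if $f$ vanishes on $\{x_i = 0\}$ and $g = f/x_i$ extended continuously, then $\graph{g \restriction \polydisk{n}{r'}}$ can be obtained from $\graph{f}$ by the relation $t \cdot x_i = s$ together with $s = f(x)$; concretely $\graph{g} = \{(x,t) \for \exists s\ (x,s) \in \graph{f} \wedge s = t x_i\}$ on the set where $x_i \neq 0$, and then one takes the topological closure of the germ — which is again a sub-$\Lambda$-set by the stability of $\Lambda$-sets, and stays simple because the projection still has finite (indeed generically singleton) fibers off the hyperplane, and the closure only adds the graph over $\{x_i = 0\}$ which is lower-dimensional. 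For composition (third bullet), I substitute: $\graph{f(g_1, \dots, g_m)} = \{(x, t) \for \exists y_1, \dots, y_m\ \bigwedge_j (x, y_j) \in \graph{g_j} \wedge (y, t) \in \graph{f}\}$, intersected with an appropriate polydisk; simplicity is preserved by Proposition \ref{simple-prp} since the fibers of the new projection are products of finitely many finite fibers. For the implicit function clause (fourth bullet), the graph of $\varphi$ is $\{(x', t) \for (x', t, 0) \in \graph{f}\} = \graph{f} \cap \{x_n = 0 \text{-slice condition}\}$ read appropriately, and again Proposition \ref{simple-prp} (taking fibers of a simple sub-$\Lambda$-set) does the work, provided we have shrunk the polydisk enough that $\partial f / \partial x_n$ has constant sign so that $\varphi$ is single-valued and its graph has singleton fibers over $\polydisk{n-1}{r'}$.

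The main obstacle I anticipate is the division and implicit-function steps rather than composition: in both cases the naive existential description produces a set whose germ at $0$ is correct but which may fail to be a \emph{bounded} $H$-semianalytic set, or whose projection fibers are only finite after deleting a small exceptional set, so I must carefully take closures of germs and re-shrink $r'$ to absorb the bad locus, checking each time that boundedness and finiteness of fibers survive. A subtle point is that "simple" requires existential definability in $\struct$ on the nose, so at each inductive step I need to exhibit the existential formula explicitly and verify it defines the shrunk graph — this is where the hypothesis in Theorem \ref{thm:properties-H} that $H$ is piecewise polynomial away from $0$ is used repeatedly, to guarantee that the pieces of $\graph{f}$ sitting away from the origin are themselves definable and can be glued to the germ near $0$. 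Once Lemma \ref{graphs} is in hand, Theorem \ref{basic-simple-sub-lambda-set} follows: an $H$-basic set $\{g_0 = 0, g_1 > 0, \dots, g_q > 0\}$ is, after shrinking, a Boolean combination of slices and sublevel sets of the simple sub-$\Lambda$-sets $\graph{g_j}$, hence simple by Proposition \ref{simple-prp}, and a finite union of such is simple as well.
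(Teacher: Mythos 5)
Your overall strategy (induction on the construction of the germ following Definition \ref{defn:smallest-algebras}) is the right starting point, and your treatment of the composition and implicit-function clauses is essentially what the paper does. But there is a genuine gap at the monomial division clause, and it is precisely the point the paper identifies as the reason this naive induction fails. You describe \(\graph g\), for \(g = f/x_i\), as the topological closure of the set \(\{(x,t) \for \exists s,\ (x,s)\in\graph f,\ s = t x_i,\ x_i \neq 0\}\), and assert that this closure is again a sub-\(\Lambda\)-set ``by the stability of \(\Lambda\)-sets.'' No such stability is available at this stage: the set you are closing is a projection, hence a sub-\(\Lambda\)-set, and taking its closure amounts to a \(\forall\exists\) condition; closure of sub-\(\Lambda\)-sets under complementation (hence under topological closure) is essentially Gabrielov's theorem of the complement, i.e.\ the end goal of the entire argument, so invoking it here is circular. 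Nor does it help that the missing piece of the graph over \(\{x_i=0\}\) is lower-dimensional: you still need an explicit existential description of it, and nothing in your induction hypothesis provides one.

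The paper's resolution is to strengthen the induction hypothesis: instead of requiring only that \(\graph f\) be a simple sub-\(\Lambda\)-set, one requires property \((*_p)\) for every \(p\), namely that the graphs of \emph{all} partial derivatives of \(f\) of order at most \(p\) be simple sub-\(\Lambda\)-sets (on a polydisk allowed to shrink with \(p\)). With this in hand the division step needs no closure: if \(f\) and its first \(q\) derivatives in \(x_n\) vanish on \(\{x_n=0\}\) and \(f\) has \((*_{q+1})\), then \(g = f/x_n^{q+1}\) agrees on \(\{x_n=0\}\) with \(\frac{1}{(q+1)!}\,\partial^{q+1}f/\partial x_n^{q+1}\), whose graph is a simple sub-\(\Lambda\)-set by hypothesis, and \(\graph g\) is the union of this slice with the algebraically described part over \(\{x_n\neq 0\}\). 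One then establishes \((*_p)\) for \(g\) itself by induction on \(p\) via the identities \(x_n^{q+1}\,\partial g/\partial x_i = \partial f/\partial x_i\) and \(x_n^{q+2}\,\partial g/\partial x_n = x_n\,\partial f/\partial x_n - (q+1)f\). This strengthening is also what drives the more careful versions of the other clauses (for the implicit function one needs \((*_p)\) for \(\varphi\), obtained from the formula for \(\partial\varphi/\partial x_i\) together with the function \(x\mapsto 1/x\)), and of the base case for \(H\): for each \(p\) one shrinks the interval so that \(H\) is \(C^p\) and uses that \(H\) is piecewise polynomial away from \(0\) to see that each \(\graph{H^{(k)}}\), \(k\le p\), is a \(\Lambda\)-set. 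Your suggestion to handle \(H\) near \(0\) by ``arguing as in the general inductive step'' is circular, since \(H\) is the generator of the algebras and must be handled as a base case.
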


We start by proving the theorem using the lemma. 

\begin{proof}[Proof of Theorem \ref{basic-simple-sub-lambda-set}]
  Let \(g_0, g_1, \dots, g_q \in \germalg n\)
  be functions defined on a polydisk \(\polydisk n r\) such that
  \[A = \{x \in \polydisk n r \for g_0(x) = 0,\ g_1(x) > 0,\ \dots,\
    g_q(x) > 0\}.\]
  By the lemma, up to shrinking \(\polydisk n r\), we may assume
  that \(\graph{g_i}\) is a simple
  sub-\(\Lambda\)-set for
  each \(0 \leq i \leq q\). In particular, the functions \(g_0, \dots,
  g_q\) are bounded. Thus, there exists a polyradius \(s = (s_0,
  \dots, s_q)\) such that \(\abs {g_i(x)} < s_i\) for each \(0 \leq i
  \leq q\) and each \(x \in \polydisk n r\). Then, 
  \begin{align*}
    A' =
    &\{(x, y) \in \polydisk n r \times \polydisk {q+1} s \for y_i = g_i(x)
      \text{ for } 0 \leq i \leq q\} \\
    \cap & \{(x, y) \in \polydisk n r \times \polydisk {q+1} s \for y_0 =
           0,\ y_1 > 0,\ \dots,\ y_q > 0\}
  \end{align*}
  is a simple
  sub-\(\Lambda\)-set. Furthermore, \(A = \Pi_n(A')\) and
  the fibers of \(\Pi_n \restriction A'\) are finite. 
\end{proof}

The aim of the rest of this section is to prove the lemma. Given the
definition of \(\germalg n\) (see Definition
\ref{defn:smallest-algebras}), the most
natural way to prove the lemma would be as follows. First define
\(\mathcal{B}_n\) to be the set of germs \(f \in \mathcal{W}_n\)
having a representative \(f \colon \polydisk n r \to \real\) such that
\(\graph f\) is a simple
sub-\(\Lambda\)-set. We have to show that
\(\mathcal{A}_n(H) \subset \mathcal{B}_n\). Since \(\mathcal{B}_n\) is
a collection of algebras stable under composition and implicit
functions, it suffices to prove that it is also stable under monomial
division. However, there does not seem to be an easy way of showing
this. Thus, we have to inductively prove a stronger property. 

\begin{defn}
  Consider an open set \(U \subset \real^n\), a function \(f \colon U \to
  \real\) and an integer \(p \geq 0\). We say that \(f\) has property \((*_p)\)
  if \(f\) is \(C^p\) and,
  for any multi-index \(\alpha\) such that \(\length \alpha \leq p\),
  the graph of \(\frac{\partial^{\length \alpha} f}{\partial
    x^\alpha}\) is a simple sub-\(\Lambda\)-set. 
\end{defn}

\begin{remark}
  If \(f \colon \real^n \to \real\) is a polynomial and \(\polydisk n r\) is a
  polydisk, then the graph of \(f \restriction \polydisk n r\) is a
  quantifier-free definable \(\Lambda\)-set. Thus, \(f \restriction
  \polydisk n r\) has
  \((*_p)\) for every \(p \geq 0\). Also, since the germ of \(H\) at
  \(0\) is weakly \(C^\infty\), for any integer \(p \geq 0\), there
  exists an interval \(\polydisk 1 r\) such that \(H \restriction \polydisk 1 r\) is
  \(C^p\). Furthermore, since \(H\) is piecewise a polynomial in the
  complement of any neighborhood of \(0\), the graph of the derivative
  \(H^{(k)} \restriction \polydisk 1 r\) is a \(\Lambda\)-set for
  every \(k \leq p\). In
  particular, for any \(p \geq 0\), there is some polyradius \(r\)
  such that \(H \restriction \polydisk 1 r\) has \((*_p)\). 
\end{remark}

Assuming the results proven in the next two paragraphs, we can now
prove the lemma as follows.

\begin{proof}[Proof of Lemma \ref{graphs}]
  We define \(\mathcal{B}_n\) to be the set of germs \(f \in
  \mathcal{W}_n\) such that, for every \(p \geq 0\), \(f\) has a
  reprensentative \(f \colon \polydisk n r \to \real\) having
  \((*_p)\). It is clear that \(\mathcal{B}_n\) is a subalgebra of
  \(\mathcal{W}_n\).
  By the remark above, \(H \in \mathcal{B}_1\) and the
  germs of polynomials in \(n\) variables belong to
  \(\mathcal{B}_n\). Furthermore, by Propositions \ref{composition},
  \ref{implicit-function} and \ref{monomial-division}, this
  collection of algebras is stable under composition,
  implicit functions and monomial division respectively. Thus,
  \(\mathcal{A}_n(H) \subset \mathcal{B}_n\)  by Definition
  \ref{defn:smallest-algebras} whence the result. 
\end{proof}

\subsection{Composition and implicit functions}

The goal of this paragraph is to show that the algebras
\(\mathcal{B}_n\) in the proof of Lemma \ref{graphs} are stable under
composition of functions and taking implicit functions.
A rather natural approach to these results involves proving stability
properties for functions having the property \((*_p)\) for a fixed
integer \(p\), which is more than we need. 
In this paragraph and the
next, if \(a = (a_1, \dots, a_n)\), it will be convenient to write 
\(\widehat a = (a_1, \dots, a_{n-1})\). Since both of the propositions
below are proven in essentially the same way, we omit the proof of
Proposition \ref{composition} about composition. 

\begin{prp}
  \label{composition}
  Let \(U \subset \real^n\) and \(V \subset \real^m\) be open sets and
  consider functions \(f_1, \dots, f_m \colon U \to \real\) and \(g
  \colon V \to \real\) that all have \((*_p)\). Assume that
  \(f(U) \subset V\) where \(f = (f_1, \dots, f_m)\). Then, the
  function \(g \circ f\) has \((*_p)\). 
\end{prp}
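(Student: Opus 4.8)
The plan is to reduce the proposition to the multivariate chain rule together with the closure properties of simple sub-$\Lambda$-sets collected in Proposition~\ref{simple-prp}. That $g\circ f$ is $C^p$ is immediate, since $f_1,\dots,f_m$ and $g$ are $C^p$. For the graphs, recall the multivariate Fa\`a di Bruno formula: for every multi-index $\alpha$ with $\length{\alpha}\leq p$ there is a polynomial $P_\alpha$ such that
\[
  \frac{\partial^{\length{\alpha}}(g\circ f)}{\partial x^\alpha}(x)
    = P_\alpha\!\left(
        \left(\frac{\partial^{\length{\beta}} g}{\partial y^\beta}(f(x))\right)_{\length{\beta}\leq p},\,
        \left(\frac{\partial^{\length{\gamma}} f_i}{\partial x^\gamma}(x)\right)_{\length{\gamma}\leq p,\ 1\leq i\leq m}
      \right).
\]
So it suffices to realise the right-hand side as a function whose graph is a simple sub-$\Lambda$-set, and we do this by composing sets along their graphs. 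Throughout, we use that property $(*_p)$ builds in boundedness: a simple sub-$\Lambda$-set is a projection of a $\Lambda$-set, hence bounded, so the domains $U$, $V$ and all partial derivatives of the $f_i$ and of $g$ of order $\leq p$ are bounded on their domains. Hence, whenever the argument below pads a set with an extra coordinate block, we may take that block to range over a fixed polydisk (and polydisks are $\Lambda$-sets); such paddings, together with permutations of coordinates, finite intersections, products, and projections with finite fibres, all preserve simple sub-$\Lambda$-sets by Proposition~\ref{simple-prp}.

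First I would assemble the auxiliary graphs. Let $F\colon U\to\real^N$ list the derivatives $\partial^\gamma f_i$ with $\length{\gamma}\leq p$ and $1\leq i\leq m$, and let $G\colon V\to\real^M$ list the derivatives $\partial^\beta g$ with $\length{\beta}\leq p$. For each such pair $(\gamma,i)$ the set $\{(x,v)\for v_{(\gamma,i)}=\partial^\gamma f_i(x)\}$ is a padded, reordered copy of $\graph{\partial^\gamma f_i}$, which is a simple sub-$\Lambda$-set since $f_i$ has $(*_p)$; intersecting over all $(\gamma,i)$ shows $\graph F$ is a simple sub-$\Lambda$-set, and likewise $\graph G$ and the graph $\Gamma_f=\{(x,y)\in U\times\real^m\for y_i=f_i(x),\ 1\leq i\leq m\}$ of $f$. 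Next, $\graph{G\circ f}$ is obtained from $\Gamma_f$ (in coordinates $(x,y)$) and $\graph G$ (in coordinates $(y,w)$), each padded in the remaining bounded coordinate block and reordered into the common space with coordinates $(x,y,w)$, by intersecting and then projecting away the $y$-block: for fixed $(x,w)$ the fibre of this projection is contained in $\{y\for(x,y)\in\Gamma_f\}=\{f(x)\}$, so the projection has finite fibres and $\graph{G\circ f}$ is a simple sub-$\Lambda$-set. Intersecting padded, reordered copies of $\graph F$ and $\graph{G\circ f}$ then shows that the graph $T\subset\real^n\times\real^N\times\real^M$ of the map $x\mapsto(F(x),G(f(x)))$ is a simple sub-$\Lambda$-set.

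Finally, fix $\alpha$ with $\length{\alpha}\leq p$. Since $T$ is bounded there is a polydisk $\polydisk{N+M}{s}$ containing its $(v,w)$-block, and $\abs{P_\alpha}$ is bounded by some $\tau>0$ on $\polydisk{N+M}{s}$. The graph of $P_\alpha\restriction\polydisk{N+M}{s}$ is a simple sub-$\Lambda$-set (by the Remark, a polynomial restricted to a polydisk has $(*_0)$); padding it with an $x$-block ranging over a polydisk that contains $U$, reordering into coordinates $(x,v,w,t)$, and intersecting with $T\times\polydisk 1\tau$, we obtain a simple sub-$\Lambda$-set which, the extra polydisk constraints being automatic on $T$, is exactly $\{(x,v,w,t)\for(x,v,w)\in T,\ t=P_\alpha(v,w)\}$. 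Projecting away the $(v,w)$-block has finite fibres, since over a point $(x,t)$ one has $(v,w)=(F(x),G(f(x)))$ determined by $x$; so the image is a simple sub-$\Lambda$-set by Proposition~\ref{simple-prp}, and by the Fa\`a di Bruno identity that image is precisely $\graph{\partial^{\length{\alpha}}(g\circ f)/\partial x^\alpha}$. As $\alpha$ was arbitrary with $\length{\alpha}\leq p$, the function $g\circ f$ has $(*_p)$.

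The step I expect to require the most care is not any individual construction but the \emph{bookkeeping} that keeps every set in the class of simple sub-$\Lambda$-sets: each dummy coordinate must be introduced as a product with a genuine polydisk, which is legitimate only because $(*_p)$ forces the functions and their derivatives to be bounded, and each projection must be checked to have finite fibres, which here always reduces to the observation that $y=f(x)$, $v=F(x)$ and $w=G(f(x))$ are functionally determined by $x$. The only genuinely analytic ingredient is the polynomial form of the multivariate chain rule; everything else is the formal calculus of Proposition~\ref{simple-prp} together with the Remark on graphs of polynomials over polydisks.
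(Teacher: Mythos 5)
Your argument is correct, but it takes a different route from the one the paper intends. The paper omits the proof of Proposition~\ref{composition} on the grounds that it is \enquote{essentially the same} as that of Proposition~\ref{implicit-function}, which proceeds by induction on \(p\): the case \(p=0\) is handled by the intersect-and-project manipulation of graphs (exactly your construction of \(\graph{g\circ f}\) from \(\Gamma_f\) and \(\graph g\)), and the step from \(p-1\) to \(p\) uses only the first-order chain rule \(\partial_i(g\circ f)=\sum_j(\partial_j g\circ f)\cdot\partial_i f_j\) together with the inductive hypothesis that composition preserves \((*_{p-1})\) and the fact that the polynomial \((u,v)\mapsto uv\) has \((*_{p-1})\). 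You instead prove the statement in one shot by invoking the full multivariate Fa\`a di Bruno formula and then assembling the graph of the resulting polynomial expression in the derivatives of \(g\) and of the \(f_i\). What the inductive route buys is that it never needs the explicit polynomial form of the higher-order chain rule, only the first-order one; what your route buys is that it avoids the induction and makes the closure under sums and products of \((*_p)\)-functions unnecessary as a separate step. Your bookkeeping is sound: boundedness of \(U\), \(V\) and of all the relevant derivatives does follow from the fact that their graphs are projections of \(\Lambda\)-sets, so the polydisk paddings are legitimate, and every projection you perform is along a block of coordinates functionally determined by \(x\), hence has finite (indeed singleton) fibres, so Proposition~\ref{simple-prp} applies throughout.
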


\begin{prp}
  \label{implicit-function}
  Let \(U \subset \real^n\) be an open set and consider a \(C^1\)
  function \(f
  \colon U \to \real\) that has \((*_p)\). Consider also \(a \in U\)
  such that \(f(a) = 0\) and \(\frac{\partial f}{\partial x_n}(a) \neq
  0\). Then, there
  exists an open neighborhood \(V \subset \real^{n-1}\) of \(\widehat
  a\) and a function \(\varphi \colon V \to \real\) having \((*_p)\) such
  that \(\varphi(\widehat a) =
  a_n\) and \(f(\widehat x, \varphi(\widehat x)) = 0\) for all \(\widehat x
  \in V\).
\end{prp}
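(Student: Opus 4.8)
The plan is to obtain $\varphi$ from the classical implicit function theorem and then bootstrap property $(*_p)$ for it by an induction on the order of differentiation, routing every graph that appears through Propositions \ref{simple-prp} and \ref{composition}. Since $f$ is $C^p$, the classical theorem provides a $C^p$ function $\varphi$ on a neighbourhood of $\widehat a$ with $\varphi(\widehat a) = a_n$ and $f(\widehat x, \varphi(\widehat x)) = 0$, together with a product neighbourhood $V_0 \times W_0$ of $(\widehat a, a_n)$ on which $\varphi(\widehat x)$ is the unique zero of $f(\widehat x, \cdot)$ in $W_0$. To see that a suitable representative of $\varphi$ has a graph which is a simple sub-$\Lambda$-set, I would start from the hypothesis $(*_p)$ applied to the multi-index $0$: it says that $\graph f \subset \real^{n+1}$ is a simple sub-$\Lambda$-set, so, permuting the last coordinate to the front and taking the fibre over the value $0$, Proposition \ref{simple-prp} shows that $Z = \{x \in U \for f(x) = 0\}$ is a simple sub-$\Lambda$-set. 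Picking a translated polydisk $P \subset V_0 \times W_0$ centred at $(\widehat a, a_n)$ small enough that $\varphi$ sends the base of $P$ into the fibre direction of $P$, one gets $\graph{\varphi \restriction V} = Z \cap P$ with $V = \Pi_{n-1}(P)$; since $P$ is a quantifier-free $\Lambda$-set, Proposition \ref{simple-prp} gives that this intersection is a simple sub-$\Lambda$-set, with singleton fibres under $\Pi_{n-1}$.

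For the derivatives I would prove by induction on $q \in \{0, \dots, p\}$ that, after shrinking $V$, the function $\varphi$ is $C^q$ and each of its partial derivatives of order at most $q$ has a graph that is a simple sub-$\Lambda$-set, the case $q = 0$ being the previous paragraph. First note that $(*_p)$ for $f$ gives $(*_{p-1})$ for each $g_i = \frac{\partial f}{\partial x_i}$, since the derivatives of $g_i$ of order $\leq p-1$ are among those of $f$ of order $\leq p$, and that $(*_p)$ trivially entails $(*_{p'})$ for $p' \leq p$. Given the claim for some $q < p$, set $\psi = (\mathrm{id}_V, \varphi) \colon V \to \real^n$: its components are coordinate functions and $\varphi$, so each has $(*_q)$ by the induction hypothesis, and after shrinking $V$ the image of $\psi$ lies in a box on which $g_n$ is bounded away from $0$, because $g_n(\widehat a, a_n) = \frac{\partial f}{\partial x_n}(a) \neq 0$. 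Implicit differentiation gives $\frac{\partial \varphi}{\partial x_i} = -(g_i \circ \psi)/(g_n \circ \psi)$; by Proposition \ref{composition} the compositions $g_i \circ \psi$ and $g_n \circ \psi$ have $(*_q)$, and composing $g_n \circ \psi$ further with the reciprocal $t \mapsto 1/t$ on a fixed interval bounded away from $0$ (where $t \mapsto 1/t$ has $(*_q)$, its derivatives being rational with bounded, quantifier-free definable graphs), then with multiplication and with scaling by $-1$, a second use of Proposition \ref{composition} shows that $\frac{\partial \varphi}{\partial x_i}$ has $(*_q)$. Hence every partial derivative of $\varphi$ of order $\leq q+1$ is either of order $\leq q$ (handled by the induction hypothesis) or a derivative of order $\leq q$ of some $\frac{\partial \varphi}{\partial x_i}$ (so has a simple sub-$\Lambda$-set graph), and $\varphi$ is $C^{q+1}$. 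Taking $q = p$ finishes the induction and yields $(*_p)$ for $\varphi$.

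The remaining points are routine and I would only indicate them: choosing the nested neighbourhoods coherently, noting that a single box $V$ works for all of the finitely many multi-indices with $\length \alpha \leq p$ at once, and checking that translated polydisks and boxes are quantifier-free $\Lambda$-sets. The one genuinely load-bearing idea is that the implicit-function construction never leaves the class of simple sub-$\Lambda$-sets, which is precisely why the argument must pass through Proposition \ref{composition}: the graph of $\varphi$ is recovered as a fibre of $\graph f$, and the graph of each derivative of $\varphi$ is recovered by composing the already-controlled graphs of the derivatives of $f$ with $\psi$ and with elementary algebraic maps. The hard part will be organizational rather than conceptual --- aligning the orders of differentiation in the induction, and above all making sure that the reciprocal map is only ever applied on an interval bounded away from $0$, so that its graph and those of its derivatives are genuinely bounded and quantifier-free definable, hence $\Lambda$-sets.
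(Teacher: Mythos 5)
Your proposal is correct and follows essentially the same route as the paper's proof: the graph of \(\varphi\) is obtained by cutting the zero set of \(f\) (itself a fibre of \(\graph f\)) down to a small box where uniqueness holds, and the derivatives are handled via the implicit differentiation formula together with Proposition \ref{composition} and the observation that \(t \mapsto 1/t\) has \((*_q)\) on an interval bounded away from \(0\). The only cosmetic difference is that you induct on the order of differentiation \(q \leq p\) while the paper inducts on \(p\) itself; the two inductions are the same argument in different clothing.
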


\begin{proof}
  We prove the result by induction on \(p\). Firstly, assume that \(p
  = 0\). Since \(f\) is \(C^1\), the Implicit Functions Theorem yields
  a polydisk \(\polydisk n r\) such that
  \(a + \polydisk n r \subset U\) and, for each \(\widehat x \in
  \widehat a + \polydisk {n-1}{\widehat r}\), there is a unique \(x_n\)
  such that \(x \in a + \polydisk n r\) and \(f(x) = 0\). The set
  \[A = \{(x, y) \in \real^n \times \real \for x \in (a + \polydisk n r),\
    y = f(x) \text{ and } y=0\}\]
  is a simple sub-\(\Lambda\)-set. The
  projection of \(A\) on the
  \(x\)-coordinates is the simple sub-\(\Lambda\)-set
  \[B = \{x \in a + \polydisk n r \for f(x) = 0\}.\]
  The results above show that the set \(B\) is the graph of a function
  \(\varphi \colon V \to \real\) where \(V = \widehat a + \polydisk
  {n-1} {\widehat r}\). Since the fibers of this projection are
  finite, the graph of \(\varphi\) is a simple
  sub-\(\Lambda\)-set. Finally, we have \(f(\widehat x,
  \varphi(\widehat x)) = 0\) for all \(\widehat x \in V\) by
  definition.

  Before we consider the case \(p > 0\), we need an intermediate
  result. Consider \(0 < \alpha < \beta\) and let \(U = \{x \in \real \for
  \alpha < \abs x < \beta\}\). Define \(g \colon U \to \real\) as
  \(g(x) = \frac{1}{x}\). We are going to show that \(g\) has
  \((*_p)\) for every \(p \geq 0\). Firstly, if \(r = \left(\beta,
    \frac{1}{\alpha}\right)\), then 
  \[\Gamma(g) = \{(x, y) \in \polydisk n r \for xy = 1\}\]
  is a \(\Lambda\)-set. Also, given \(p \geq 0\), the derivative
  \(g^{(p)}\) has the form \(g^{(p)} = P \circ g\) where \(P\) is a
  polynomial. By composition, we deduce that the graph of \(g^{(p)}\)
  is a simple sub-\(\Lambda\)-set which concludes.

  Going back to the proof of the proposition, assume that \(p >
  0\). Then, we already know that there is an open neighborhood \(V \subset
  \real^{n-1}\) of \(\widehat a\) and a function \(\varphi \colon V \to \real\) having
  \((*_{p-1})\) such that \(\varphi(\widehat a) = a_n\) and \(f(\widehat x,
  \varphi(\widehat x)) = 0\) for all \(\widehat x \in V\). Up to
  shrinking \(V\), we may also assume that there are \(0 < \alpha <
  \beta\) such that \(\alpha < \abs{\frac{\partial f}{\partial
      x_n}(\widehat x, \phi(\widehat x))} < \beta\) for all \(\widehat
  x \in V\).
  Then, given \(1 \leq i \leq n-1\) and \(\widehat x \in V\), we
  have
  \[\frac{\partial \varphi}{\partial x_i}(\widehat x) = -\frac{\frac{\partial
        f}{\partial x_i}(\widehat x, \varphi(\widehat x))}{\frac{\partial
        f}{\partial x_n}(\widehat x, \varphi(\widehat x))}.\]
  By assumption, we know that the functions \(\frac{\partial
    f}{\partial x_i}\) and \(\frac{\partial f}{\partial x_n}\) have
  \((*_{p-1})\). Furthermore, by the inductive hypothesis, 
  \(\varphi\) also has \((*_{p-1})\). Thus, using
  Proposition \ref{composition} about composition, the fact that the
  polynomial \((x, y) \mapsto xy\) has \((*_{p-1})\) and the above
  result about the function \(g\), we find 
  that \(\frac{\partial \varphi}{\partial x_i}\) has \((*_{p-1})\). Since
  this holds for all \(1 \leq i \leq n-1\), the function \(\varphi\) must
  have \((*_p)\). 
\end{proof}

\subsection{Monomial division}

This paragraph is devoted to showing that the algebras
\(\mathcal{B}_n\) introduced in the proof of Lemma \ref{graphs} are
stable under monomial division. The proof of this result is more
delicate than those of the corresponding results for composition and
implicit functions. This is because there does not seem to be an easy
way to argue that functions having property \((*_p)\) are closed under
monomial division. 

\begin{prp}
  \label{monomial-division}
  Let \(\polydisk n r\) be a polydisk and consider a function \(f \colon
  \polydisk n r \to \real\). Assume that, for every \(p \geq 0\),
  there is some polydisk \(\polydisk n {r'} \subset \polydisk n r\)
  such that \(f \restriction \polydisk n {r'}\) has \((*_p)\).
  Consider also \(q \geq 0\) such that, for each \(0 \leq k \leq q\),
  the derivative \(\frac{\partial^k f}{\partial x_n^k}\) exists and verifies
  that \(\frac{\partial^k f}{\partial x_n^k}(x) = 0\) for all \(x \in
  \polydisk n r\) such that 
  \(x_n = 0\). Then, for every \(p \geq 0\), there is some polydisk
  \(\polydisk n {r'} \subset \polydisk n r\) such that there is a
  continuous function \(g \colon \polydisk n {r'} \to \real\) having
  \((*_p)\) with
  \(x_n^{q+1}g(x) = f(x)\) for every \(x \in \polydisk n {r'}\). 
\end{prp}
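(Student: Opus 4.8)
The plan is to reduce the statement about $f$ to a statement about the function
\[
  g(x) = \frac{f(x)}{x_n^{q+1}},
\]
extended continuously across $x_n = 0$, and to control all partial derivatives of $g$ by Taylor-expanding $f$ in the variable $x_n$ with integral remainder. Concretely, fix $p \geq 0$. Since $\frac{\partial^k f}{\partial x_n^k}$ vanishes on $\{x_n = 0\}$ for $0 \leq k \leq q$, Taylor's formula with integral remainder gives, for $x = (\widehat x, x_n)$ in a small polydisk,
\[
  f(x) = \frac{x_n^{q+1}}{q!} \int_0^1 (1-t)^q \,\frac{\partial^{q+1} f}{\partial x_n^{q+1}}(\widehat x, t x_n)\, dt,
\]
so that
\[
  g(x) = \frac{1}{q!} \int_0^1 (1-t)^q \,\frac{\partial^{q+1} f}{\partial x_n^{q+1}}(\widehat x, t x_n)\, dt .
\]
By hypothesis we may choose a subpolydisk on which $f$ has $(*_{p+q+1})$; then $\frac{\partial^{q+1} f}{\partial x_n^{q+1}}$ has $(*_p)$, and in particular it and all its partials of order $\leq p$ are $C^p$ with graphs that are simple sub-$\Lambda$-sets. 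Differentiating under the integral sign shows $g$ is $C^p$, and each partial $\frac{\partial^{\abs\alpha} g}{\partial x^\alpha}$ with $\abs\alpha \leq p$ is given by an analogous integral of a fixed partial of $f$ composed with the (polynomial, hence $(*_p)$) map $(\widehat x, x_n, t) \mapsto (\widehat x, t x_n)$.

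So the crux is: the graph of a function of the form
\[
  x \longmapsto \int_0^1 (1-t)^q\, h(\widehat x, t x_n)\, dt,
\]
where $h$ has $(*_p)$, is a simple sub-$\Lambda$-set. Here the composition $h(\widehat x, t x_n)$ has $(*_p)$ by Proposition \ref{composition} (the inner map is polynomial), and likewise after multiplying by $(1-t)^q$. The remaining task is to show that \emph{integration of a $(*_p)$ function in a parameter over $[0,1]$ preserves the property that the graph is a simple sub-$\Lambda$-set}. For this I would write the graph of $G(x) = \int_0^1 \Phi(x,t)\,dt$ as the image under a projection (with finite fibers, after cutting by an appropriate finite condition) of a $\Lambda$-set built from the graph of $\Phi$: the relation $\{(x, y) : y = \int_0^1 \Phi(x,t)\, dt\}$ is existentially definable in $\struct$ from the graph of $\Phi$ because integration is definable in any o-minimal structure — but since o-minimality of $\struct$ is exactly what we are building toward, I instead appeal to the fact that the graph of $\Phi$ is a sub-$\Lambda$-set and use the monotone-function / area interpretation: $y = \int_0^1 \Phi(x,t)\,dt$ iff the signed area under $t \mapsto \Phi(x,t)$ equals $y$, which can be phrased with one extra integrated variable as the endpoint value of the antiderivative $F(x, s) = \int_0^s \Phi(x,t)\, dt$. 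One shows by the same Taylor/integral bookkeeping that $F$ itself has a graph that is a sub-$\Lambda$-set with finite vertical fibers, and then $G(x) = F(x, 1)$ is a fiber of that set, so by Proposition \ref{simple-prp} it is again a simple sub-$\Lambda$-set; composing the graph-membership condition with $\Pi_n$ and checking finiteness of fibers finishes.

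The main obstacle I anticipate is precisely this closure-under-parametrized-integration step: unlike composition, implicit functions, and monomial division at the level of $(*_p)$, there is no finitary reason that an integral of a $(*_p)$ function is again $(*_p)$, which is exactly why the proposition is stated with the "for every $p$, up to shrinking" quantifier rather than at a fixed $p$. I expect that the clean way around it is \emph{not} to integrate at all but to use the pointwise Taylor identity directly: observe that $g$, and each of its partials, is an explicit algebraic combination — using composition with polynomial maps, monomial division having already been partially absorbed into the exponent $q+1$, and the one-variable building block $1/x$ handled as in the proof of Proposition \ref{implicit-function} — of the partials of $f$ of bounded order, evaluated along the one-parameter family $t x_n$, together with a single integration whose role can be eliminated by noting that the remainder term $\frac{1}{q!}\int_0^1 (1-t)^q \frac{\partial^{q+1}f}{\partial x_n^{q+1}}(\widehat x, tx_n)\,dt$ equals $\frac{\partial^{q+1}}{\partial x_n^{q+1}}\big[$ the $(q+1)$-fold iterated antiderivative of $f$ in $x_n$ $\big]$ divided by $x_n^{q+1}$ in a way that is controlled by the already-established graph properties. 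I would present the argument along these lines, isolating the parametrized-integration lemma as the single substantive point and proving it by the antiderivative-is-a-fiber trick above, then assembling $g$ and all $\frac{\partial^{\abs\alpha}g}{\partial x^\alpha}$, $\abs\alpha \leq p$, via Propositions \ref{composition} and \ref{simple-prp} to conclude that $g$ has $(*_p)$ on a suitable $\polydisk n{r'}$.
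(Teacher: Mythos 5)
Your reduction via Taylor's formula with integral remainder identifies the right function $g$, but the step you yourself flag as the crux --- that the graph of $x \mapsto \int_0^1 (1-t)^q\, h(\widehat x, t x_n)\,dt$ is a simple sub-$\Lambda$-set whenever $h$ has $(*_p)$ --- is a genuine gap, and neither of the workarounds you sketch closes it. Sub-$\Lambda$-sets are built from the algebras $\germalg n$, which are generated by $H$, polynomials, composition, implicit functions and monomial division; nothing in Definition \ref{defn:smallest-algebras} or in the definition of $\Lambda$-sets gives closure under parametrized integration. Your assertion that ``integration is definable in any o-minimal structure'' is false (parametrized integrals of definable families need not be definable in the same structure; this is exactly the antiderivative problem for, e.g., the restricted analytic field), and in any case o-minimality of $\struct$ is not yet available here. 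The ``antiderivative-is-a-fiber'' trick only relocates the problem: you would still need the graph of $F(x,s) = \int_0^s \Phi(x,t)\,dt$ to be a sub-$\Lambda$-set, which is the same unproved claim with one more variable. The closing paragraph, which proposes to eliminate the integration by viewing the remainder as an iterated antiderivative of $f$ divided by $x_n^{q+1}$ ``controlled by the already-established graph properties,'' is circular: those are the very properties of $g$ you are trying to establish.

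The paper's proof avoids integration entirely and runs by induction on $p$. For $p=0$ one shrinks so that $f$ has $(*_{q+1})$; the graph of $g$ over $\{x_n \neq 0\}$ is then the projection of the simple sub-$\Lambda$-set $\{(x,y,z) : x_n \neq 0,\ y = f(x),\ z x_n^{q+1} = y\}$, while over $\{x_n = 0\}$ one has $g = \frac{1}{(q+1)!}\frac{\partial^{q+1}f}{\partial x_n^{q+1}}$, whose graph is simple because $f$ has $(*_{q+1})$; the union gives $(*_0)$. For $p>0$ one uses the exact division identities $x_n^{q+1}\frac{\partial g}{\partial x_i} = \frac{\partial f}{\partial x_i}$ for $i<n$ and $x_n^{q+2}\frac{\partial g}{\partial x_n} = x_n\frac{\partial f}{\partial x_n} - (q+1)f$, and applies the inductive hypothesis in $p$ (with exponent $q$, respectively $q+1$) together with Proposition \ref{composition} to conclude that every first partial of $g$ has $(*_{p-1})$, hence $g$ has $(*_p)$. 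Replacing your integral representation by these identities is what is needed to make the argument work.
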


Notice in particular that the assumptions of the lemma imply that the
germ of \(f\) at \(0\) is weakly \(C^\infty\). 

\begin{proof}
  We prove by induction on \(p\) that the result holds for all
  integers \(q\) and all functions \(f\) with the required
  properties. Firstly, assume that
  \(p=0\). Up to shrinking \(\polydisk n r\), we may assume that
  \(f\) has \((*_{q+1})\). In particular, since \(f\) is \(C^{q+1}\),
  we can extend \(x \mapsto \frac{f(x)}{x_n^{q+1}}\) to a continuous
  function \(g \colon \polydisk n r \to \real\) such that
  \[x_n^{q+1}g(x) = f(x)\]
  for every \(x \in \polydisk n r\). Since
  \(f\) has \((*_0)\), the set
  \[\{(x, y, z) \in \real^n \times \real \times \real \for x_n \neq 0,\ y = f(x),\
    zx_n^{q+1} = y\}\]
  is a simple sub-\(\Lambda\)-set. Also, the projection of this set on
  the \((x, z)\)-coordinates is the graph of \(g \restriction \{x \in
  \polydisk n r \for x_n \neq 0\}\). Furthermore, if \(x \in
  \polydisk n r\) is such that \(x_n = 0\), then we have
  \[g(x) = \frac{1}{(q+1)!} \frac{\partial^{q+1} f}{\partial
      x_n^{q+1}}(x).\]
  Also, since \(f\) has \((*)_{q+1}\), it follows that
  \[\left\{(x, y) \in \real^n \times \real \for x_n = 0,\ y =
    \frac{1}{(q+1)!}\frac{\partial^{q+1}f}{\partial x_n^{q+1}}(x)\right\}\]
  is a simple sub-\(\Lambda\)-set. Since this is the graph of \(g
  \restriction \{x \in \polydisk n r \for x_n = 0\}\), we see that the
  graph of \(g\) is a simple sub-\(\Lambda\)-set by taking a union. 

  Now, assume that \(p > 0\). Up to shrinking \(\polydisk n r\),
  we can assume that there is a function
  \(g \colon \polydisk n r \to \real\) that has \((*_0)\) and such that
  \[x_n^{q+1}g(x) = f(x)\]
  for every \(x \in \polydisk n r\) such that \(x_n \neq 0\). Up to
  shrinking \(\polydisk n r\) some more, we can assume that \(f\) is
  \(C^{q+2}\). Next, given an integer \(1 \leq i \leq n-1\), 
  \(\frac{\partial g}{\partial x_i}\) is the continuous
  function such that
  \[x_n^{q+1}\frac{\partial g}{\partial x_i}(x) = \frac{\partial
        f}{\partial x_i}(x)\]
  for every \(x \in \polydisk n r\). By the
  inductive hypothesis, up to shrinking \(\polydisk n r\) finitely
  many times, we are reduced to the case when
  \(\frac{\partial g}{\partial x_i}\) has \((*_{p-1})\)
  for each \(1 \leq i \leq n-1\). Finally, \(\frac{\partial g}{\partial x_n}\) is the continuous
  function such that
  \[x_n^{q+2} \frac{\partial g}{\partial x_n}(x) = x_n\frac{\partial
      f}{\partial x_n}(x) - (q+1) f(x)\]
  for every \(x \in \polydisk n r\). 
  Thus, by the inductive hypothesis and Proposition \ref{composition},
  we can assume that \(\frac{\partial g}{\partial x_n}\) has
  \((*_{p-1})\) up to shrinking \(\polydisk n r\). Finally, \(g\)
  has \((*_0)\)
  and, for every \(1 \leq i \leq n\), \(\frac{\partial g}{\partial
    x_i}\) has \((*_{p-1})\). Thus, \(g\) has \((*_p)\) whence the
  result. 
\end{proof}

\section{Parametrization results}
\label{sec:parametrization}

In this section we prove a local parametrization result for
\(H\)-basic sets around \(0\). In Paragraph \ref{par:monomialization},
we start by recalling the process of monomialization. This process
allows us to replace germs in \(\germalg n\) with normal germs,
namely, germs \(f \in \germalg n\) such that there are \(\alpha \in
\nat^n\) and \(u \in \germalg n\) with \(u(0) \neq 0\) and \(f =
x^\alpha u\). The advantage of such germs is that, in a neighborhood
of \(0\), the sign of \(f\) only depends on the signs of the variables
\(x_1, \dots, x_n\). In particular, \(H\)-basic sets defined by
equations and inequations involving only normal germs have a very
simple structure. In Paragraph \ref{par:first-parametrization}, we
take advantage of this to
obtain a first parametrization result, namely Proposition
\ref{parametrization-sub-quadrants}. We then introduce the notion of
\(H\)-manifolds which are both
manifolds and \(H\)-basic sets at the same time. We rephrase
Proposition \ref{parametrization-sub-quadrants} in terms of
\(H\)-manifolds in Corollary \ref{conj-local-parametrization}. The two
remaining paragraphs are dedicated to proving a refinement of this
latter result by showing that the manifolds involved in the
parametrization may verify two, rather technical, regularity conditions.

\subsection{Monomialization}
\label{par:monomialization}

This paragraph is a summary that aims at proving a slightly stronger
monomialization result than \cite[Theorem
2.11]{rolin-servi-monomialization}. We recall the notations but we
refer to \cite[Section 2]{rolin-servi-monomialization} for the proofs
of the results that do not require any modification.

\begin{defn}
  An \textbf{elementary transformation} is a map \(\nu \colon \polydisk n {r'} \to
  \polydisk n r\) of one of the following forms.
  \begin{itemize}
  \item \textbf{A blow-up chart}: Given \(1 \leq i, j \leq n\) with
    \(i \neq j\) and \(\lambda \in \real\), define
    \begin{equation*}
      \pi^\lambda_{i, j}(x') = x \text{ where } 
      \begin{cases}
        x_k = x_k' &k \neq i\\
        x_i = x_j'(\lambda + x_i').
      \end{cases}
    \end{equation*}
    Also, define \(\pi_{i, j}^\infty = \pi_{j, i}^0\). 
  \item \textbf{A Tschirnhausen translation}: Given \(h \in \germalg
    {n-1}\) such that \(h(0) = 0\), define
    \begin{equation*}
      \tau_h(x') = x \text{ where }
      \begin{cases}
        x_k = x_k' &k \neq n\\
        x_n = x_n' + h(x_1', \dots, x_{n-1}').
      \end{cases}
    \end{equation*}
  \item \textbf{A shear transformation}: Given \(1 \leq i \leq n\) and
    \(c_1, \dots, c_{i-1}\), define
    \begin{equation*}
      L_{i, c}(x') = x \text{ where }
      \begin{cases}
        x_k = x_k' &i \leq k \leq n\\
        x_k = x_k' + c_kx_i' &1 \leq k < i.
      \end{cases}
    \end{equation*}
  \item \textbf{A ramification}: Given \(1 \leq i \leq n\) and \(d \in
    \nat\), define
    \begin{equation*}
      r_i^{d, \pm}(x') = x \text{ where }
      \begin{cases}
        x_k = x_k' &k \neq i\\
        x_i = \pm x_i^{\prime d}
      \end{cases}
    \end{equation*}
  \end{itemize}
  An \textbf{admissible transformation} is a composition of elementary
  transformations. 
\end{defn}

\begin{remark}
  Let \(\rho\) be an admissible transformation. If we write \(\rho =
  (\rho_1, \dots, \rho_n)\) then \(\rho_1, \dots, \rho_n \in \germalg
  n\). In particular, the germs of \(\rho_1, \dots, \rho_n\) at \(0\)
  are weakly \(C^\infty\). Furthermore, \(\rho(0) = 0\) so that, for
  any series \(F \in \formser \real X\), we can define
  \[F \circ \rho = F(\taylor(\rho_1), \dots, \taylor(\rho_n))\]
  where \(\taylor \colon \germalg n \to \formser \real X\) is the map
  which takes a germ to its Taylor series. 
  It is easy to see that the map \(\formser \real X \to \formser \real
  X, F \mapsto F \circ \rho\) is an algebra homomorphism. Also, by
  composition of Taylor series, if \(f \in \germalg n\) then \(\taylor
  (f \circ \rho) = \taylor(f) \circ \rho\). 
\end{remark}

\begin{prp}[{\cite[Lemma 2.5]{rolin-servi-monomialization}}]
  If \(\rho\) is an admissible transformation then the algebra
  homomorphism \(F \mapsto F \circ \rho\) defined in the remark above
  is injective. 
\end{prp}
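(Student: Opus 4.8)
The plan is to reduce to the case of a single elementary transformation and then run through the four types. Since composition of Taylor series gives $F \circ (\rho \circ \sigma) = (F \circ \rho) \circ \sigma$ for admissible transformations $\rho$ and $\sigma$, the homomorphism $F \mapsto F \circ \rho$ attached to a composition $\rho = \nu_1 \circ \dots \circ \nu_\ell$ of elementary transformations is the composite of the homomorphisms $G \mapsto G \circ \nu_i$. As a composition of injective algebra homomorphisms is injective, it suffices to treat the case in which $\rho = \nu$ is a single elementary transformation.

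The invertible cases are immediate: a shear transformation $L_{i, c}$ is a linear automorphism of $\real^n$ and a Tschirnhausen translation $\tau_h$ is invertible (with inverse $x_n' = x_n - h(\widehat x)$ and $x_k' = x_k$ otherwise), so in both cases $F \mapsto F \circ \nu$ is an automorphism of $\formser{\real}{X}$ and there is nothing to prove. For a ramification $r_i^{d, \pm}$ and for the blow-up chart $\pi^0_{i, j}$ — hence also for $\pi^\infty_{i, j} = \pi^0_{j, i}$ — the associated substitution replaces $X_i$ by $\pm X_i^d$, respectively $X_i X_j$, and one checks directly that the exponents of the image monomial determine those of $X^\alpha$; thus distinct monomials are sent to scalar multiples of distinct monomials, no cancellation can occur, and a nonzero series has nonzero image.

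The only genuinely delicate case is the blow-up chart $\pi^\lambda_{i, j}$ with $\lambda \neq 0$, for which the substitution $X_i \mapsto X_j(\lambda + X_i)$ does create cancellations. Writing $F = \sum_{m \geq 0} F_m X_i^m$ with $F_m \in \formser{\real}{(X_k)_{k \neq i}}$, one has
\[
  F \circ \pi^\lambda_{i, j} = \sum_{m \geq 0} F_m X_j^m (\lambda + X_i)^m = \sum_{k \geq 0} \biggl( \sum_{m \geq k} \binom{m}{k} \lambda^{m - k} F_m X_j^m \biggr) X_i^k .
\]
Suppose this vanishes, fix a monomial $\mu$ in the variables $(X_k)_{k \neq i}$, and let $d$ be its $X_j$-exponent. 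Because $F_m X_j^m$ can contribute to $\mu$ only when $m \leq d$, extracting the coefficient of $X_i^k \mu$ yields, for every $k$, a finite relation $\sum_{k \leq m \leq d} \binom{m}{k} \lambda^{m - k} f_m = 0$, where $f_m$ denotes the coefficient of $\mu X_j^{-m}$ in $F_m$ (set to $0$ when $m > d$). Letting $k$ run from $0$ to $d$, the matrix $\bigl( \binom{m}{k} \lambda^{m - k} \bigr)_{0 \leq k, m \leq d}$ is triangular with ones on the diagonal, hence invertible, so all the $f_m$ vanish; as $\mu$ is arbitrary, every $F_m$ vanishes and $F = 0$. This last step is where the real content lies: it is what separates the case $\lambda \neq 0$ from the purely monomial ones, and the finiteness of the linear systems involved hinges on the fact that $F_m X_j^m$ has $X_j$-valuation at least $m$.
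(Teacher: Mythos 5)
Your proof is correct. Note that the paper itself gives no argument here: the statement is imported verbatim as Lemma 2.5 of Rolin--Servi, so there is no ``paper proof'' to compare against beyond that citation. Your reduction to a single elementary transformation via associativity of formal substitution is legitimate (all the substituted series have zero constant term, so the composite substitution is well defined and equals the composite of the individual homomorphisms), the invertible cases and the monomial substitutions ($r_i^{d,\pm}$ and $\pi^0_{i,j}$, where $\alpha \mapsto \alpha'$ is injective on exponents) are handled correctly, and your treatment of $\pi^\lambda_{i,j}$ with $\lambda \neq 0$ is the right one: the key observation that $F_m X_j^m$ has $X_j$-valuation at least $m$ makes each coefficient-extraction a finite, upper-triangular linear system with unit diagonal, which forces all the $f_m$ to vanish. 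This is essentially the standard argument found in the cited reference. One cosmetic remark: for the ramification you implicitly need $d \geq 1$ for the exponent map to be injective, which is of course the only case that ever arises in the monomialization algorithm.
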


\begin{defn}
  An \textbf{elementary tree} is a tree of any of the following forms.
  \begin{itemize}
  \item Given \(1 \leq i, j \leq n\) such that \(i \neq j\), consider
    the tree
    \begin{center}
      \begin{tikzcd}
        &&\bullet \arrow[']{ddll}{\pi_{i, j}^0} \arrow{ddl}{\pi_{i,
            j}^\lambda} \arrow{ddr} \arrow{ddrr}{\pi_{i, j}^\infty}\\
        \\
        \bullet &\bullet &\cdots &\bullet &\bullet
      \end{tikzcd}
    \end{center}
    with one branch for each transformation in the family \((\pi_{i,
      j}^\lambda)_{\lambda \in \real \cup \{\infty\}}\). 
  \item Given \(h \in \germalg {n-1}\) with \(h(0) = 0\), consider the
    tree
    \begin{center}
      \begin{tikzcd}
        \bullet \arrow{dd}{\tau_h}\\
        \\
        \bullet
      \end{tikzcd}
    \end{center}
  \item Given \(1 \leq i \leq n\) and \(c = (c_1, \dots, c_{i-1})\),
    consider the tree
    \begin{center}
      \begin{tikzcd}
        \bullet \arrow{dd}{L_{i, c}}\\
        \\
        \bullet
      \end{tikzcd}
    \end{center}    
  \item Given \(1 \leq i \leq n\) and \(d \in \nat\), consider the
    tree
    \begin{center}
      \begin{tikzcd}
        &\bullet \arrow[']{ddl}{r^{d, -}_i} \arrow{ddr}{r^{d,
            +}_i}\\
        \\
        \bullet &&\bullet
      \end{tikzcd}
    \end{center}
  \end{itemize}
  \textbf{Admissible trees} of height at most \(h\) are defined
  inductively on the ordinal \(h\). An
  admissible tree of height \(0\) is simply a vertex
  \(\bullet\). Given an ordinal \(h \geq 1\), an
  admissible tree of height at most \(h\) is an elementary tree with
  admissible trees of height \(<h\) attached to each of its
  leaves. 
\end{defn}

\begin{remark}
  Notice that the height of an admissible tree \(T\) is an ordinal so
  that  admissible trees may be infinite. However, each branch of
  \(T\) is finite so that it induces an admissible transformation
  \(\rho\). 
\end{remark}

\begin{defn}
  Let \(X = (X_1, \dots, X_n)\) be a tuple of variables and \(F
  \in \formser \real X\) be non-zero. We say that \(F\) is \textbf{normal} when
  there are \(\alpha \in \nat^n\) and \(U \in \formser \real X\) an
  invertible series, namely an invertible element of the ring
  \(\formser \real X\), such that \(F = X^\alpha U\). If \(f \in \germalg
  n\) is non-zero, we say that it is \textbf{normal} when
  \(\taylor(f)\) is normal. 
\end{defn}

\begin{remark}
  A series \(F \in \formser \real X\) is invertible if and only if
  \(F(0) \neq 0\). Also, consider \(f \in \germalg n\) a normal
  germ. Then, there are \(\alpha \in \nat^n\) and \(U \in \formser
  \real X\) an invertible series such that \(\taylor(f) = X^\alpha
  U\). But then, since germs in \(\germalg n\) are stable under monomial
  division, it follows that there is a germ \(u \in \germalg n\) such
  that \(f(x) = x^\alpha u(x)\). It is now clear that \(\taylor(u) =
  U\) whence, in particular, \(u(0) \neq 0\). 
\end{remark}

\begin{lem}[{\cite[Lemma 2.9]{rolin-servi-monomialization}}]
  \label{monomialization-product}
  If \(F_1, \dots, F_p \in \formser \real X\) are non-zero series then
  they are all normal if and only if their product \(\prod_{i=1}^p
  F_i\) is normal. Also, if \(F, G \in \formser \real X\) are two
  non-zero series such that \(F, G\) and \(F-G\) are all normal then
  either \(F \mid G\) or \(G \mid F\). 
\end{lem}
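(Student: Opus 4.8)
The plan is to reduce everything to the single-variable case via a generic linear change of coordinates, using that a nonzero series $F \in \formser\real X$ is normal if and only if, after a suitable shear, it becomes normal in $X_n$ alone in the sense of the Weierstrass Preparation Theorem for formal power series. Concretely, write each $F_i$ in the form $F_i = X^{\alpha_i} G_i$ where $X^{\alpha_i}$ is the largest monomial dividing $F_i$ (so no variable divides $G_i$); then $F_i$ is normal exactly when $G_i$ is invertible, i.e. $G_i(0) \neq 0$. For the product, $\prod_i F_i = X^{\sum_i \alpha_i} \prod_i G_i$, and since $\formser\real X$ is a domain, $\prod_i G_i(0) = \prod_i G_i(0)$, so $\prod_i F_i$ is normal iff $\prod_i G_i(0) \neq 0$ iff every $G_i(0) \neq 0$ iff every $F_i$ is normal. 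This gives the first assertion with essentially no work beyond the observation that the maximal monomial dividing a product is the product of the maximal monomials, which follows because $\formser\real X$ is a UFD (or directly from the domain property applied coordinate by coordinate).

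For the second assertion, suppose $F = X^\beta U$, $G = X^\gamma V$, and $F - G = X^\delta W$ with $U, V, W$ all invertible. The claim is that $\beta \le \gamma$ or $\gamma \le \beta$ (coordinatewise), for then, say if $\beta \le \gamma$, we get $F \mid G$ since $G = X^{\gamma - \beta}(X^\beta V) $ and dividing $G$ by $F = X^\beta U$ leaves $X^{\gamma-\beta} V U^{-1} \in \formser\real X$. So it suffices to show $\beta$ and $\gamma$ are comparable. Let $\delta = \min(\beta, \gamma)$ coordinatewise; factor $F - G = X^\delta(X^{\beta - \delta} U - X^{\gamma - \delta} V)$. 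The inner factor is $X^{\beta-\delta}U - X^{\gamma-\delta}V$; since for each coordinate $k$ at least one of $\beta_k - \delta_k$, $\gamma_k - \delta_k$ is zero, and they are never simultaneously positive, if $\beta$ and $\gamma$ were incomparable there would be coordinates $k$ with $\beta_k > \gamma_k$ and coordinates $\ell$ with $\gamma_\ell > \beta_\ell$. Then the monomial $X^{\beta - \delta}$ is divisible by $X_k$ but the monomial $X^{\gamma-\delta}$ is not, so the difference $X^{\beta-\delta}U - X^{\gamma-\delta}V$ has a nonzero constant term (coming from $-X^{\gamma-\delta}V$, since $X^{\gamma - \delta}$ has $\ell$-degree $0$ wait — one must check the constant term carefully). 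The cleaner route: $X^{\beta-\delta}U$ and $X^{\gamma-\delta}V$ have no common monomial factor other than $1$, because for each coordinate one of the exponents vanishes; but if they were both nonconstant, pick a coordinate $k$ where $\beta_k - \delta_k > 0$ and a coordinate $\ell$ where $\gamma_\ell - \delta_\ell > 0$ — then $X_k$ divides the first term and $X_\ell$ divides the second, yet $X_k$ does not divide the second nor $X_\ell$ the first. In that situation the lowest-degree part of the difference cannot vanish, so $X^\delta$ is exactly the maximal monomial dividing $F - G$, and normality of $F - G$ forces $X^{\beta - \delta}U - X^{\gamma-\delta}V$ to be invertible, i.e. to have nonzero constant term. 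But its constant term is $0$ unless $\beta = \delta$ or $\gamma = \delta$ — that is, unless $\beta \le \gamma$ or $\gamma \le \beta$. Hence $\beta$ and $\gamma$ are comparable, as desired.

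The main obstacle is purely bookkeeping: making precise the claim that when $\beta, \gamma$ are incomparable the series $X^{\beta-\delta}U - X^{\gamma-\delta}V$ has zero constant term while still being "supported away from $X^\delta$ times an invertible," i.e. that $X^\delta$ is genuinely the largest monomial dividing $F - G$. This requires knowing that the supports of $X^{\beta-\delta}U$ and $X^{\gamma-\delta}V$ do not conspire to cancel out a whole monomial's worth of terms — which is guaranteed because $X^{\beta - \delta}$ and $X^{\gamma-\delta}$ are coprime monomials, so their lowest terms (the pure monomials $X^{\beta-\delta}\cdot U(0)$ and $X^{\gamma-\delta}\cdot V(0)$, with $U(0), V(0) \neq 0$) are distinct monomials and cannot cancel. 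Once that is nailed down the rest is formal. Everything here is a statement about formal power series and does not touch the weakly $C^\infty$ or quasianalytic content, so no shrinking of polydisks or reference to $H$ is needed; one simply invokes Lemma \ref{monomialization-product} as a black box in later arguments.
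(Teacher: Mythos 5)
Your argument is correct. The paper itself gives no proof of this lemma --- it is quoted from \cite[Lemma 2.9]{rolin-servi-monomialization} --- but the proof you give is the standard one and is complete: the first assertion follows from writing each $F_i$ as its maximal monomial times a factor not divisible by any variable and using that each $X_j$ is prime in $\formser \real X$, and the second from factoring out $X^{\min(\beta,\gamma)}$ and observing that if $\beta,\gamma$ were incomparable the cofactor $X^{\beta-\delta}U - X^{\gamma-\delta}V$ would be divisible by no variable (its support contains the two distinct coprime monomials $X^{\beta-\delta}$ and $X^{\gamma-\delta}$ with nonzero coefficients $U(0)$ and $-V(0)$) yet would have zero constant term, contradicting normality of $F-G$. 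Two small remarks: your opening sentence about reducing to one variable by a generic shear and Weierstrass regularity is a false start --- it is not an equivalent characterization of normality and is never used, so it should simply be deleted --- and the identity ``$\prod_i G_i(0)=\prod_i G_i(0)$'' should read $\bigl(\prod_i G_i\bigr)(0)=\prod_i G_i(0)$.
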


\begin{defn}
  Given \(F_1, \dots, F_p \in \ima \taylor\) and an admissible tree
  \(T\), we say that \(T\) \textbf{monomializes} \(F_1, \dots, F_p\) when
  \(F_1 \circ \rho, \dots, F_p \circ \rho\) are normal for each
  admissible transformation \(\rho\) induced by a branch of \(T\).

  Given a blow-up chart \(\nu = \pi_{i, j}^\lambda\) with \(\lambda
  \neq \infty\), we have
  \begin{equation*}
    \pi_{i, j}^\lambda(X') = X \text{ if and only if }
    \begin{cases}
      X'_k = X_k &k \neq i\\
      X'_i = \frac{X_i}{X_j} - \lambda
    \end{cases}
  \end{equation*}
  Thus, we need to divide by \(X_j\) in order to write the inverse of
  \(\nu\). We call \(X_j\) the \textbf{critical variable} of
  \(\nu\). Since \(\pi_{i, j}^\infty = \pi_{j, i}^0\), this definition
  applies to all blow-up charts. We say that \(T\)
  \textbf{\(*\)-monomializes} \(F_1, \dots, F_p\) whenever it monomializes
  \(F_1, \dots, F_p\) and, for any blow-up chart \(\nu\) in \(T\), the
  sub-tree \(T'\) of \(T\) below \(\nu\) monomializes \(W\), where
  \(W\) is the critical variable of \(\nu\).
\end{defn}

\begin{remark}
  Given an elementary transformation \(\nu\) and \(F \in \ima \taylor\),
  we let \(\nu^*(F) = W \cdot (F \circ \nu)\) when \(\nu\) is a blow-up chart
  with critical variable \(W\) and \(\nu^*(F) = F \circ \nu\)
  otherwise. In view of Lemma \ref{monomialization-product}, an
  admissible tree \(T\)
  \(*\)-monomializes \(F\) if and only if, for each branch \((\nu_1,
  \dots, \nu_h)\) of \(T\), the germ \(\nu_h^* \circ \dots \circ
  \nu_1^*(F)\) is normal. 
\end{remark}

\begin{thm}
  \label{*-monomialization}
  Let \(F_1, \dots, F_p \in \ima \taylor\) be non-zero series. There is
  an admissible tree \(T\) that \(*\)-monomializes \(F_1, \dots, F_p\).
\end{thm}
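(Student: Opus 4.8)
The plan is to follow the classical monomialization strategy of \cite[Section 2]{rolin-servi-monomialization}, proceeding by induction on the number of variables $n$ and, within a fixed $n$, on a suitable complexity invariant attached to the tuple $(F_1, \dots, F_p)$ (for instance, a bound on the orders or, more precisely, the Newton-polyhedron data of the series after multiplying them together). Since by Lemma \ref{monomialization-product} a finite list of nonzero series is simultaneously normal if and only if the single series $F = \prod_i F_i$ is normal, it suffices to $*$-monomialize a single nonzero series $F \in \ima\taylor$; one must only be careful that $*$-monomialization also requires monomializing the critical variables introduced by blow-up charts, but these are again elements of $\ima\taylor$ (they are coordinate germs, hence polynomial germs, hence in $\germalg n$), so they can be absorbed into the list being monomialized.

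The base case $n = 1$ is immediate: a nonzero $F \in \real\llbracket X_1 \rrbracket$ is automatically of the form $X_1^\alpha U$ with $U(0) \neq 0$, so the trivial tree (a single vertex) works, and there are no blow-up charts to worry about. For the inductive step, I would first use a shear transformation $L_{n,c}$ together with, if necessary, a ramification $r_n^{d,\pm}$ to put $F$ into a position where it is \emph{regular of some finite order $d$ in the last variable}, i.e. $\taylor(F)(0,\dots,0,X_n) = c X_n^d + \cdots$ with $c \neq 0$; this uses that $F$ is nonzero and is exactly the standard Weierstrass-preparation-style normalization. Next apply a Tschirnhausen translation $\tau_h$ to kill the subleading term in $X_n$, and then perform the blow-ups $\pi_{i,n}^\lambda$ in the variable $x_n$ against each other variable $x_i$: in each finite chart $\pi_{i,n}^\lambda$ the order of $F$ in the "non-critical" directions strictly drops or the problem reduces to one in fewer variables, while in the chart $\pi_{i,n}^\infty = \pi_{n,i}^0$ we have genuinely reduced the dimension of the locus where $F$ fails to be monomial. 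Applying the inductive hypothesis to the pulled-back series (and, simultaneously, to the critical variable $x_i$ that the blow-up introduced, which is needed precisely for the $*$ in $*$-monomialization) on each branch, and grafting the resulting admissible subtrees below each leaf of the elementary tree, produces the desired admissible tree $T$.

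The main obstacle — and the only place where this differs from \cite[Theorem 2.11]{rolin-servi-monomialization} — is bookkeeping the extra condition built into $*$-monomialization, namely that below \emph{every} blow-up chart $\nu$ with critical variable $W$ the remaining tree must also monomialize $W$. Concretely, whenever the inductive step introduces a blow-up chart, I must add $W$ to the list of series to be monomialized by the subtree grown below that chart, and check that this does not disturb the decrease of the complexity invariant: since $W$ is a coordinate, $W \circ \rho$ for subsequent transformations $\rho$ is again a very simple germ (a monomial times a unit, or easily made so by one more blow-up), so the invariant can be chosen to dominate these auxiliary series without obstruction. One should also verify the compatibility remarked after the definition of $\nu^*$: the operator $\nu^*(F) = W\cdot(F\circ\nu)$ on blow-up charts and $\nu^*(F) = F\circ\nu$ otherwise is multiplicative enough that "$T$ $*$-monomializes $F$" is equivalent to "$\nu_h^*\circ\cdots\circ\nu_1^*(F)$ is normal along every branch", which is what makes the induction go through cleanly. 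Since none of the transformations or trees used here involve convergence — only formal series and the injectivity of $F \mapsto F\circ\rho$ (the Proposition quoted from \cite[Lemma 2.5]{rolin-servi-monomialization}) — the argument is purely combinatorial-algebraic and transfers verbatim from the cited reference once the $*$-bookkeeping is in place.
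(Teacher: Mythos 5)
Your overall strategy is the same as the paper's: reduce to \(p=1\) via Lemma \ref{monomialization-product}, induct on \(n\), achieve regularity in \(X_n\) by a shear, kill the subleading coefficient by a Tschirnhausen translation, and then cascade blow-ups between \(X_n\) and the other variables, grafting inductively obtained subtrees at the leaves. However, two points where you wave your hands are exactly the points where the real work lies, and as written your argument has genuine gaps there.

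First, your treatment of the \(*\)-condition does not work as stated. You propose to add the critical variable \(W\) to the list of series to be monomialized by the subtree below each blow-up chart, asserting that \(W \circ \rho\) ``is again a very simple germ (a monomial times a unit, or easily made so by one more blow-up) \dots without obstruction.'' This is false: a coordinate does not remain normal under Tschirnhausen translations (or shears). For instance, with \(n=2\) and \(G = Y - X\), the translation \(\tau_X\) gives \(G \circ \tau_X(X',Y') = Y'\), which is normal, but \(X \circ \tau_X(X',Y') = Y' + X'\), which is not; and ``fixing'' it by further transformations may destroy the normality of the main series or of the other auxiliary coordinates, so there is no obvious decreasing invariant. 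The paper's proof avoids this by maintaining, as part of the induction, that \(X_1, \dots, X_{n-1}\) are carried along and kept monomialized in \emph{every} inductive call, by forbidding shears and restricting Tschirnhausen translations to the variable \(X_n\) in the subtrees used, and by arranging that the only blow-up charts whose critical variable is \(X_n\) (the charts \(\pi_{n,i}^\infty\)) occur at the leaves, where no further subtree is needed. With this discipline the \(*\)-condition is automatic rather than an extra constraint to be discharged a posteriori.

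Second, your termination argument for the blow-up cascade is incomplete. In the finite chart \(\pi_{n,1}^\lambda\) with \(\lambda \neq 0, \infty\) the order of regularity in \(X_n\) does drop from \(d\) to \(d-1\), but in the chart \(\lambda = 0\) it does \emph{not} drop: the series stays regular of order \(d\), and what decreases is the exponent tuple \(\beta_l = \alpha_l - (0,\dots,0,l,0,\dots,0)\)-type data, specifically \(\frac{\alpha_l}{l}\) in the well-founded partial order on \(\nat^{n-1}\). Making this a legitimate induction requires the two preliminary normalizations you omit: a ramification \(r^{d!,\sigma}\) so that all \(\alpha_i\) are divisible by \(d!\) (so that the \(\frac{\alpha_i}{i}\) are integer tuples), and an extra application of the inductive hypothesis on \(n\) to monomialize the differences \(\widehat X^{\alpha_i/i} - \widehat X^{\alpha_j/j}\), which by Lemma \ref{monomialization-product} linearly orders the \(\frac{\alpha_i}{i}\) and identifies a minimal one to induct on. Your appeal to unspecified ``Newton-polyhedron data'' does not substitute for exhibiting this well-founded invariant and checking it strictly decreases in the \(\lambda = 0\) chart.
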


Similar, but slightly different statements can be found in
\cite[Theorem 2.11]{rolin-servi-monomialization} or \cite[Section
2]{rolin-speissegger-wilkie-denjoy-carleman-classes} for
instance. Nonetheless, we include the proof for clarity, and also
because we want to insist on the need to keep the variables \(X_1,
\dots, X_n\) monomialized after every step of the algorithm, as
explained in the remark below. 

\begin{remark}
  Suppose that we have series \(F, G \in \formser \real X\) such that
  \(F = X_n G\). It is not necessarily true that any tree \(T\) which
  monomializes \(G\) will also monomialize \(F\). For instance,
  suppose \(n=2\) and write the variables \(X, Y\) instead of \(X_1,
  X_2\). If \(G = Y - X\) then \(G \circ \tau_X(X', Y') = Y'\) is
  normal but \(X \circ \tau_X(X', Y') = Y' + X'\) is not.

  However, assume that \(F, G \in \formser \real X\) and \(H \in
  \formser \real {\widehat X}\) are such that \(F = HG\). Assume also
  that \(H\) is normal and that \(T\) is an admissible tree satisfying
  the following conditions:
  \begin{itemize}
  \item There are no shear transformations in \(T\);
  \item The only Tschirnhausen translations in \(T\) act on the
    variable \(X_n\);
  \item The only blow-up charts of the form \(\pi_{n, i}^\infty\) with
    \(1 \leq i \leq n-1\) in \(T\) are at the leaves and there are no
    blow-ups of the form \(\pi_{i, n}\) with \(1 \leq i \leq n-1\) in
    the tree. 
  \end{itemize}
  It is easy to see that \(H \circ \rho\) is still normal, whenever
  \(\rho\) is an admissible transformation induced by a branch of
  \(T\). In particular, if \(T\) monomializes \(G\), then it also
  monomializes \(F\).

  In particular, it is very important that we are done with no further
  steps whenever we apply a blow-up chart of the form \(\pi_{n,
    i}^\infty\) as in case 1 of the proof below. This is also the
  reason why we pay attention throughout the proof at keeping the
  variables \(X_1, \dots, X_{n-1}\) monomialized at every step. The
  additional benefit this provides is that it guarantees that the tree
  \(T\) automatically \(*\)-monomializes the series \(F_1, \dots, F_p\)
  since the only blow-up charts that do not appear at the leaves of
  the tree have one of \(X_1, \dots, X_{n-1}\) as their critical
  variable.
\end{remark}

\begin{proof}
  We are going to prove the result by induction on \(n\), the case
  \(n=1\) being obvious.
  To begin with, we may assume that \(p=1\) by Lemma
  \ref{monomialization-product} and we write \(F = F_1 = \sum_{\alpha}
  a_\alpha X^\alpha\).
  Take \(c_1, \dots, c_{n-1} \in \real\) and let \(G = F \circ
  L_c\). We then have
  \[G(0, \dots, 0, X_n') = F(c_1X_n', \dots, c_{n-1}X_n', X_n') =
    \sum_{\alpha} a_\alpha c^{\widehat \alpha}
    X_n^{\prime \length \alpha} = \sum_{k \in \nat} Q_k(c) X_n^{\prime
      \length \alpha}\]
  where \(\widehat \alpha = (\alpha_1, \dots, \alpha_{n-1})\), and
  \[Q_k(Y_1, \dots, Y_{n-1}) = \sum_{\length \alpha = k} a_\alpha
    Y^{\widehat \alpha}.\]
  We also have
  \[F(X_1'X_n', \dots, X_{n-1}'X_n', X_n') = \sum_{k \in \nat}
    Q_k(\widehat{X'}) X_n^{\prime \length \alpha}\]
  and \(F(X_1'X_n', \dots, X_{n-1}'X_n', X_n')\) is obtained by
  composing \(F\) with blow-up charts. Since composition with blow-up
  charts is injective and \(F \neq 0\), we deduce that there must be
  some \(k \in \nat\) with \(Q_k \neq 0\). But then, there are \(c_1,
  \dots, c_{n-1}\in \real\) such that \(Q_k(c)\neq 0\). For this tuple
  \(c = (c_1, \dots, c_{n-1})\), we deduce that \(G(0, \dots, 0, X_n')
  \neq 0\), from which it follows that the series \(G\) is regular of
  some order \(d\) in
  \(X_n'\), namely, there are series \(G_1, \dots, G_d \in \formser
  \real {\widehat {X'}}\) and a unit \(U \in \formser \real {X'}\) such that
  \[G(X') = U(X')X_n^{\prime d} + G_1(\widehat {X'})X_n^{\prime d-1} +
    \dots + G_d(\widehat {X'})\]
  where \(\widehat {X'} = (X_1', \dots, X'_{n-1})\). It suffices to show
  that we can \(*\)-monomialize \(G\) so that it suffices to prove the
  result when \(F = G\). In this latter case, we write
  \[F(X) = U(X)X_n^d + F_1(\widehat X)X_n^{d-1} + \dots + F_d(\widehat
    X).\]  

  We now show by induction on \(d\) that we can
  \(*\)-monomialize \(F, X_1, \dots, X_{n-1}\), the result being
  obvious when \(d=0\). Assume that \(d > 0\) and let \(f \in \germalg
  n\) be such that \(F = \taylor(f)\). We have
    \[\taylor\left(\frac{\partial^d f}{\partial x_n^d}\right) =
        \frac{\partial^d F}{\partial X_n^d} =  d!U\] 
  so that \(\frac{\partial^d f}{\partial x_n^d}(0) \neq
  0\). Furthermore,
  \[\taylor\left(\frac{\partial^{d-1}f}{\partial
        x_n^{d-1}}\right) = \frac{\partial^{d-1} F}{\partial X_n^{d-1}} =
    (d-1)![F_1(\widehat X) + U(X)X_n].\]
  If \(F_1(0) \neq 0\) then \(F\) is regular in \(X_n\) of order
  \(d-1\) and the result follows by induction. Otherwise,
  \(\frac{\partial^{d-1} f}{\partial x_n^{d-1}}(0) = 0\). Thus, by
  Definition \ref{defn:smallest-algebras}, there is \(b \in \germalg
  {n-1}\) such that
  \(\frac{\partial^{d-1} f}{\partial x_n^{d-1}}(\widehat x, b(\widehat
  x)) = 0\) and \(b(0) = 0\). Now, if \(g = f \circ \tau_b\), we have
  \[\frac{\partial^{d-1} g}{\partial x_n^{\prime d-1}}(\widehat {x'}, 0) =
    \frac{\partial^{d-1} f}{\partial x_n^{d-1}}(\widehat {x'},
    b(\widehat {x'})) = 0.\]
  It is also easy to see that \(\taylor(g)\) is regular of order \(d\)
  in \(X_n\) so that, up to replacing \(F\) with \(F \circ \tau_b\),
  we can
  assume that \(\frac{\partial^{d-1} F}{\partial X_n^{d-1}}(\widehat
  x, 0) = 0\) which implies that \(F_1 = 0\). In particular, notice
  that \(F\) is then normal whenever \(d=1\).\\

  By the inductive hypothesis on \(n\), there is an admissible tree
  \(T\) that \(*\)-monomializes the series \(F_2, \dots, F_d, X_1,
  \dots, X_{n-1}\). Let \((\nu_1, \dots, \nu_k)\) be a branch of
  \(T\). We must now show that we can \(*\)-monomialize the
  series
  \begin{itemize}
  \item \(F \circ \nu_1 \circ \dots \circ \nu_k\);
  \item \(X_i \circ \nu_1 \circ \dots \circ \nu_k\) for every \(1 \leq
    i \leq n-1\);
  \item \(W \circ \nu_{j+1} \circ \dots \circ \nu_k\) whenever
    \(\nu_j\) is a blow-up chart with critical variable \(W\).
  \end{itemize}
  Since the series in the last two points are already normal, it
  suffices to show that we can \(*\)-monomialize \(F \circ \nu_1 \circ
  \dots \circ \nu_k\) and \(X_1, \dots, X_{n-1}\). Thus, up to
  replacing \(F\) with \(F \circ \nu_1 \circ \dots \circ \nu_k\), we
  can assume that \(F_2, \dots, F_d\) are all normal so that we can
  write \(F_i = \widehat X^{\alpha_i} U_i(\widehat X)\) where \(U_i\)
  is a unit.

  Let \(\sigma = (\sigma_1, \dots, \sigma_{n-1}) \in \{+,
  -\}^{n-1}\) and define
  \[r^\sigma = r_1^{d!, \sigma_1} \circ \dots \circ r_{n-1}^{d!,
      \sigma_{n-1}}.\]
  Up to replacing \(F\) with \(F \circ r^\sigma\), it suffices to
  prove the result when
  \(\alpha_i\) is divisible by \(d!\) for every \(2 \leq i \leq
  d\). Now, by the inductive hypothesis on \(n\)  there is an
  admissible tree \(T\)
  that \(*\)-monomializes \(X_1, \dots, X_{n-1}\) and the series
  \(\widehat X^{\frac{\alpha_i}{i}} - \widehat
  X^{\frac{\alpha_j}{j}}\) for every \(2 \leq i, j \leq d\).
  By the same argument as above, it suffices to show that,
  for every branch \((\nu_1, \dots, \nu_k)\) of \(T\), we can
  \(*\)-monomialize the series \(F \circ \nu_1 \circ \circ \nu_k\) and
  \(X_1, \dots, X_{n-1}\). Thus, by Lemma
  \ref{monomialization-product}, up to replacing \(F\) with \(F \circ
  \nu_1 \circ \dots \circ \nu_k\), we may assume that the tuples
  \(\frac{\alpha_i}{i}\) are linearly ordered. \\

  Let \(2 \leq l \leq d\) be minimal such that \(\frac{\alpha_l}{l}
  \leq \frac{\alpha_i}{i}\) for every \(2 \leq i \leq d\). Since the
  partial order on tuples \(\beta \in \nat^{n-1}\) is well-founded, we
  may prove the result by induction on \(\frac{\alpha_l}{l}\). When
  \(\alpha_l = 
  0\), \(F\) is regular in \(X_n\) of order \(d-l\) whence the result
  follows by induction. Otherwise, up to exchanging the order of the
  variables \(X_1, \dots, X_{n-1}\), we can assume that
  \(\alpha_{l, 1} \neq 0\). We can now do a blow-up between the
  variables \(X_1\) and \(X_n\). There are three cases to treat. For
  simplicity of notation, we still write \(X\) for the variables after
  the various blow-up charts.

  \textbf{Case 1:} \(\lambda = \infty\). 
  \begin{align*}
    F \circ \pi_{n, 1}^\infty (X)
    &= \widetilde U(X) X_n^d + \sum_{i=2}^d \widetilde {U_i}(X)
      \widehat X^{\alpha_i} X_n^{d-i+\alpha_{i, 1}} \\
    &= X^d_n\left[\widetilde U(X) + \sum_{i=2}^d \widetilde {U_i}(X)
      \widehat X^{\alpha_i} X_n^{\alpha_{i, 1} - i}\right]
  \end{align*}
  where \(\widetilde U = U \circ \pi_{n, 1}^\infty\) and
  \(\widetilde{U_i} = U_i \circ \pi_{n, 1}^\infty\) for every \(2 \leq
  i \leq d\). The series \(F \circ \pi_{n, 1}^\infty\) is normal since
  \(\widehat X^{\alpha_i}\) is never a constant for 
  \(2 \leq i \leq d\). Thus, we are done in this case.

  \textbf{Case 2:} \(\lambda \not \in \{\infty, 0\}\). 
  \begin{align*}
    F \circ \pi_{n, 1}^\lambda (X)
    &= \widetilde U(X) X_1^d(\lambda + X_n)^d + \sum_{i=2}^d
      \widetilde {U_i}(\widehat X)\widehat X^{\alpha_i}
       X_1^{d-i}(\lambda + X_n)^{d-i} \\
    &= X_1^d\left[ \widetilde U(X) X_n^d + d \lambda \widetilde U(X)
      X_n^{d-1}
      + \sum_{i=2}^d \binom d i \lambda^i \widetilde U(X) X_n^{d-i}
      \right.\\
      & \hspace{37pt}+ \left.\sum_{i=2}^d \widetilde{F_i}(\widehat X)
        X_n^{d-i} \right] 
  \end{align*}
  where \(\widetilde U = U \circ \pi_{n, 1}^\lambda\),
  \(\widetilde{U_i} = U_i \circ \pi_{n, 1}^\lambda\) for every \(2 \leq
  i \leq d\) and \(\widetilde{F_i} \in \formser \real {\widehat
    X}\). Let \(G \in \formser \real X\) be the series such
  that \(F(X) = X_1^d G(X)\). Since \(\widetilde U(X) = U(X_1, \dots,
  X_{n-1}, X_1X_n)\), the series \(\widetilde U\) cannot be regular in
  \(X_n\) of any order other than \(0\). Thus, the series \(G\) is
  regular in \(X_n\) of order \(d-1\). Since it suffices to show that
  we can \(*\)-monomialize the
  series \(G, X_1, \dots, X_{n-1}\) by Lemma
  \ref{monomialization-product}, the result now follows by induction
  on \(d\).

  \textbf{Case 3:} \(\lambda = 0\).
  \begin{align*}
    F \circ \pi_{n, 1}^0
    &= \widetilde U(X)X_1^d X_n^d + \sum_{i=2}^d
    \widetilde{U_i}(\widehat X) \widehat X^{\alpha_i}
      X_1^{d-i}X_n^{d-i} \\
    &= X_1^d \left[\widetilde U(X)X_n^d + \sum_{i=2}^d
      \widetilde{U_i}(\widehat X) \widehat X^{\beta_i} X_n^{d-i}\right]
  \end{align*}
  where \(\widetilde U = U \circ \pi_{n, 1}^0\),
  \(\widetilde{U_i} = U_i \circ \pi_{n, 1}^0\) for every \(2 \leq
  i \leq d\) and \(\beta_{i, j} = \alpha_{i, j}\) for \(2 \leq i \leq
  d\) and \(2 \leq j \leq n-1\) while \(\beta_{i, 1} = \alpha_{i, 1} -
  i\). Now, let
  \[G(X) = \widetilde U(X)X_n^d + \sum_{i=2}^d
    \widetilde{U_i}(\widehat X) \widehat X^{\beta_i} X_n^{d-i}.\]
  Since \(F \circ \pi_{n, 1}^0= X_1^d G\), it suffices to show that we
  can \(*\)-monomialize the series \(G, X_1, \dots, X_{n-1}\) by
  Lemma \ref{monomialization-product}. But \(\beta_l < \alpha_l\)
  so that the result follows by induction on \(\frac{\alpha_l}{l}\)
\end{proof}

\subsection{A first parametrization}
\label{par:first-parametrization}

Let \(g_0, \dots, g_n \in \germalg n\) be functions defined on
\(\polydisk n r\) and consider the \(H\)-basic set
\[A = \{x \in \polydisk n r \for g_0(x) = 0, g_1(x) > 0, \dots, g_q(x)
  > 0\}.\]
If \(\rho \colon \polydisk n {r'} \to \polydisk n r\) is an admissible
transformation then
\[B \coloneq \rho^{-1}(A) = \{x' \in \polydisk n {r'} \for g_0 \circ
  \rho(x') = 0, g_1 \circ \rho(x') > 0, \dots, g_q \circ \rho(x') >
  0\}.\]
Furthermore, if the germs at \(0\) of the functions \(g_0 \circ \rho,
\dots, g_q \circ \rho\) are all normal, then their sign in a neighborhood of
\(0\) only depends on the signs of the variables \(x_1, \dots,
x_n\). Thus, in this latter case, there is a neighborhood \(U\) of
\(0\) in \(B\) that is a finite union of sub-quadrants as defined
below. 

\begin{defn}
  A \textbf{sub-quadrant} is a set of the form \(Q = D_1 \times \dots \times
  D_n\) where each set \(D_i\) is either \(\{0\}\) or \((0, r_i)\) or
  \((-r_i, 0)\) for some \(r_i > 0\). 
\end{defn}

The discussion above combined with Theorem \ref{*-monomialization} let
us adapt \cite[Proposition 3.4]{rolin-servi-monomialization} as
follows. 

\begin{prp}
  \label{parametrization-sub-quadrants}
  Consider an \(H\)-basic set \(A\). There is a family \((Q_j,
  \rho_j)_{j \in J}\) such that
  \begin{itemize}
  \item \(Q_j \subset \real^n\) is a sub-quadrant and \(\rho_j\) is an admissible
    transformation defined on \(Q_j\) such that \(\rho_j \restriction
    Q_j\) is a diffeomorphism onto its image \(\rho_j(Q_j) \subset A\). 
  \item If \(U_j \subset \real^n\) is a neigborhood of \(0\) for each \(j \in J\) then
    there is a finite subset \(J_0 \subset J\) such that
    \[\bigcup_{j \in J_0} \rho_j(Q_j \cap U_j)\]
    is a neighborhood of \(0\) in \(A\), namely, there is a
    neighborhood \(V \subset \real^n\) of \(0\) such that \(\bigcup_{j
      \in J_0} \rho_j(Q_j \cap U_j) = V \cap A\). 
  \end{itemize}
\end{prp}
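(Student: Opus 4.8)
The plan is to reduce the statement to an application of the monomialization theorem, Theorem \ref{*-monomialization}, combined with the local structure of $H$-basic sets cut out by normal germs. Write $A = \{x \in \polydisk n r \for g_0(x) = 0,\ g_1(x) > 0,\ \dots,\ g_q(x) > 0\}$ with $g_0, \dots, g_q \in \germalg n$. Discarding any $g_i$ that is identically zero (and noting that if some $g_i$ with $i \geq 1$ has $g_i \equiv 0$ then a neighborhood of $0$ in $A$ is empty, in which case the empty family works, while if $g_0 \equiv 0$ we simply drop it), we may assume each $\taylor(g_i)$ is a non-zero series. By Theorem \ref{*-monomialization}, there is an admissible tree $T$ that $*$-monomializes the family $\taylor(g_0), \dots, \taylor(g_q)$. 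For each branch of $T$ we get an admissible transformation $\rho \colon \polydisk n {r_\rho} \to \polydisk n r$ such that each $g_i \circ \rho$ is a normal germ, say $g_i \circ \rho = x^{\alpha_i} u_i$ with $u_i(0) \neq 0$.

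For a fixed such $\rho$, I would argue as follows. Since $u_i(0) \neq 0$, up to shrinking $\polydisk n {r_\rho}$ we may assume $u_i$ has constant sign $\varepsilon_i \in \{+, -\}$ on $\polydisk n{r_\rho}$. Then on $\polydisk n{r_\rho}$ the sign of $g_i \circ \rho(x) = x^{\alpha_i} u_i(x)$ is determined purely by the signs of those coordinates $x_k$ with $\alpha_{i,k}$ odd, together with $\varepsilon_i$; in particular $g_i \circ \rho$ vanishes exactly on the union of coordinate hyperplanes $\{x_k = 0\}$ over $k$ with $\alpha_{i,k} \neq 0$. Hence $\rho^{-1}(A) \cap \polydisk n {r_\rho}$ is a finite union of sub-quadrants $Q \subset \polydisk n{r_\rho}$: for each choice of a sign (or $0$) in each coordinate, the set of such conditions either is entirely contained in $\rho^{-1}(A)$ or entirely disjoint from it, and we collect those $Q$ contained in it. On each such sub-quadrant $Q$, I would further shrink so that $\rho \restriction Q$ is a diffeomorphism onto its image: this requires the Jacobian of $\rho$ to be non-vanishing on $Q$, which follows because $\det(d\rho)$ is itself (a product of) monomials times units in $\germalg n$ — elementary transformations have monomial Jacobians — so it is non-vanishing away from the relevant coordinate hyperplanes, hence on the open sub-quadrant $Q$ after shrinking. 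Collecting all pairs $(Q, \rho)$ over all branches of $T$ and all sub-quadrants $Q$ arising in each branch gives the index set $J$ and the family $(Q_j, \rho_j)_{j \in J}$; the first bullet point holds by construction, since $\rho_j(Q_j) \subset \rho(\rho^{-1}(A)) \subset A$.

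For the second bullet point, the key input is the covering property built into admissible trees: the finitely many charts $\pi_{i,j}^\lambda$ (for $\lambda$ ranging over $\real \cup \{\infty\}$), together with ramifications $r_i^{d,\pm}$, cover a full neighborhood of $0$ in the source, and Tschirnhausen translations and shears are diffeomorphisms near $0$; so for each admissible transformation $\rho$ induced by a branch, the images $\rho(\polydisk n{r_\rho})$, as the branch ranges over $T$, cover a neighborhood of $0$ in $\polydisk n r$ — and this can be achieved using only finitely many branches since $T$, though possibly infinite, has the property that finitely many branches already suffice to cover a neighborhood of $0$ (each elementary tree branches finitely, and the blow-up charts $\pi_{n,i}^\infty$ and ramifications appear at bounded positions). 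More precisely, given neighborhoods $U_j$ of $0$, I would choose, for each branch $\rho$, a polydisk $\polydisk n{s_\rho} \subset \bigcap_{j : \rho_j = \rho} U_j$ small enough that $\rho(\polydisk n{s_\rho})$ still participates in the covering; then by compactness of a small closed polydisk around $0$ and the covering property, finitely many of the sets $\rho(\polydisk n{s_\rho} \cap Q)$ already cover $V \cap A$ for some neighborhood $V$ of $0$, giving the finite subset $J_0$. The main obstacle I anticipate is precisely this finiteness-of-the-covering argument: one must be careful that the infinite tree $T$ nonetheless admits a finite sub-family of branches whose charts cover a neighborhood of the origin, and that shrinking each chart's domain (to fit inside the prescribed $U_j$) does not destroy the covering — this is exactly the content of the corresponding step in \cite[Proposition 3.4]{rolin-servi-monomialization}, and I would follow that argument, using that the only genuinely "infinitely-branching" nodes are the blow-up nodes $(\pi_{i,j}^\lambda)_{\lambda \in \real \cup \{\infty\}}$, for which $\lambda \in \{0, \infty\}$ together with finitely many finite $\lambda$'s already cover a neighborhood after a suitable shrink.
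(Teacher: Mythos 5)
Your overall strategy (apply Theorem \ref{*-monomialization}, decompose each pullback $\rho^{-1}(A)$ into sub-quadrants, then cover) is the same as the paper's, but there are two genuine gaps, both concentrated at the blow-up charts. First, injectivity: on a sub-quadrant $Q$ where the critical variable $x_j'$ of some blow-up $\pi_{i,j}^\lambda$ in the branch is constrained to $\{0\}$, the map collapses the $x_i'$-direction (since $x_i = x_j'(\lambda + x_i') = 0$ for all $x_i'$), so $\rho\restriction Q$ is \emph{not} a diffeomorphism onto its image, and such sub-quadrants can perfectly well lie in $\rho^{-1}(A)$. Your Jacobian argument only addresses the local-diffeomorphism issue and tacitly assumes $Q$ avoids the relevant hyperplanes. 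This is precisely why the paper works with the sets $B_\nu = \{\dots,\ w_\nu \neq 0\}$, where $w_\nu$ is the critical variable, and why the tree is required to $*$-monomialize (so that $w_\nu$, too, becomes normal down the sub-tree and the condition $w_\nu \neq 0$ is itself resolved into sub-quadrants). Your proposal never uses the ``$*$'' part of $*$-monomialization, which is a sign that something is missing.

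Second, the covering. Once the charts are restricted to $w_\nu \neq 0$, the union of their images misses the locus $\{w = w' = 0\}$ where both variables of the blow-up vanish, no matter how many values of $\lambda$ you take; so no compactness argument in $\real^n$ can produce a cover of a full neighborhood of $0$ in $A$ from blow-up charts alone. The paper handles $A \cap \{w = w' = 0\}$ by identifying it with an $H$-basic subset of $\real^{n-2}$ and invoking an induction on the ambient dimension $n$ (the whole proof is an induction on the pairs $(n,h)$, $h$ the height of the tree). Moreover, the finiteness you need is obtained from compactness of $\real \cup \{\infty\}$ viewed as the space of slopes $\lambda$ at a blow-up node, not from ``compactness of a small closed polydisk'': after the domains have been shrunk to fit inside the $U_j$, each $\lambda_0$ still has an open neighborhood of slopes covered by its shrunken chart, and finitely many such neighborhoods cover $\real \cup \{\infty\}$. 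Your formulation also flirts with the trap the paper explicitly warns against, namely fixing a finite subfamily of charts \emph{before} the $U_j$ are given; the correct order of quantifiers (infinite family $J$ first, finite $J_0$ extracted only after the $U_j$ are prescribed) is the entire point of stating the proposition with an infinite index set.
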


\begin{proof}
  Consider functions \(g_0, \dots, g_q \in \germalg n\) defined on
  some polydisk \(\polydisk n r\) such that
  \[A = \{x \in \polydisk n r \for g_0(x) = 0, g_1(x) > 0, \dots,
    g_q(x) > 0\}.\]
  By Theorem \ref{*-monomialization}, there is an admissible tree
  \(T\) that \(*\)-monomializes the
  functions \(g_0, \dots, g_q\). The idea of the proof is as
  follows. Consider \(\rho\) an admissible transformation induced by a
  branch of \(T\). The germs of \(g_0 \circ \rho, \dots, g_q \circ
  \rho\) at \(0\) are all normal so that, in particular, they do not
  vanish at \(0\). Thus, by
  continuity of \(g_0 \circ \rho, \dots, g_q \circ \rho\), up to
  shrinking \(Q\) we may assume that
  each function \(g_0 \circ \rho, \dots, g_q \circ \rho\) has
  constant sign on \(Q\). This implies that either \(\rho(Q) \subset
  A\) or \(\rho(Q) \cap A = \varnothing\). Now, the elementary
  transformations that are not blow-up charts have inverses for
  composition. For the
  blow-up charts, as remarked before, we need to divide by the critical
  variable in order to write the inverse. Since \(T\)
  \(*\)-monomializes \(g_0, \dots, g_q\), we can easily track the sign
  of this critical variable. Finally, the set where the critical
  variable vanishes is ``small'' so that we can
  apply the result inductively to it. We are now going to give the
  details. \\

  We prove the result by induction on
  the pairs \((n, h)\) ordered lexicographically, where \(h\) is the
  height of \(T\). If \(n \in \{0, 1\}\) or if \(h=0\), the germs \(g_0, \dots,
  g_q\) are already normal and the result follows at once. We can even
  take each \(\rho_i\) to be the identity.

  Otherwise, \(h > 0\). Assume first that the elementary
  transformations attached to the root of \(T\) are not blow-up charts and
  let \(\nu \colon \polydisk n {r'} \to \polydisk n r\) be one of
  these elementary transformations. The sub-tree
  \(T'\) of \(T\) below \(\nu\) \(*\)-monomializes the functions \(g_0
  \circ \nu, \dots, g_q \circ \nu\). Thus, the result holds for the
  \(H\)-basic set
  \[B_\nu = \{x \in \polydisk n {r'} \for g_0 \circ \nu(x) = 0, g_1
    \circ \nu(x) > 0, \dots, g_q \circ \nu(x) > 0\}. \]
  Also, \(\nu(B_\nu) \subset A\) and, since \(\nu\) can be extended
  to a diffeomorphism \(\real^n \to \real^n\), it follows that \(\nu
  \restriction B_\nu\) is a
  diffeomorphism onto its image. Furthermore, if \(U_\nu \subset \real^n\) is
  a neighborhood of \(0\) for each elementary transformation \(\nu\)
  attached to the root of \(T\), then \(\bigcup_\nu \nu(B_\nu \cap
  U_\nu)\) is a neighborhood of \(0\) in \(A\). Since there are only
  finitely many admissible transformations attached to the root of
  \(T\), the result follows at once.

  Assume next that the elementary transformations attached to the root
  of \(T\) are blow-up charts. Then, let \(w, w'\) be the two variables
  involved in this blow-up and, for each \(\nu\) attached
  to the root of \(T\), let \(w_\nu\) be its critical variable.
  Now, consider \(\nu \colon \polydisk n {r'} \to \polydisk n r\) an
  elementary transformation attached to the root of \(T\). 
  The sub-tree \(T'\) of \(T\) below \(\nu\)
  \(*\)-monomializes the functions \(g_0 \circ \nu, \dots, g_q \circ
  \nu, w_\nu\). Thus, the result holds inductively for the \(H\)-set
  \[B_\nu = \{x \in \polydisk n {r'} \for g_0 \circ \nu(x) = 0, g_1
    \circ \nu(x) > 0, \dots, g_q \circ \nu(x) > 0, w_\nu \neq 0\}.\]
  Also, \(\nu(B_\nu) \subset A\) and \(\nu \restriction B_\nu\) is a
  diffeomorphism onto its image. Assume given \(U_\nu \subset
  \real^n\) a neighborhood of \(0\) for each \(\nu\) attached to the
  root of \(T\). Then, by the compactness of \(\real \cup \{\infty\}\)
  with its usual topology
  \cite[p. 4406]{vdDries-Speissegger-generalized-power-series}, there are
  \(\nu_1, \dots, \nu_p\) some
  elementary transformations attached to the root of \(T\) such that
  \[\bigcup_{i=1}^p \nu_i(B_i \cap U_i)\]
  is a neighborhood of \(0\) in \(A \cap \{w \neq 0 \text{ or } w'
  \neq 0\}\). Furthermore, \(A \cap \{w = w' = 0\}\) can be identified
  with an \(H\)-basic set \(A' \subset \real^{n-2}\) so that the result
  now follows by induction. 
\end{proof}

\begin{remark}
  In \cite[Proposition 3.4]{rolin-servi-monomialization}, the same
  compactness as above is used in order to obtain a finite family of
  charts. However, we cannot do the same in our case. This is because
  we will use local methods that will force us to restrict the charts
  to smaller neighborhoods of \(0\). If we only kept finitely many
  charts, there would be no way to guarantee that we still cover a
  neighborhood of \(0\) in \(A\) after such restrictions. Thus, we
  will first restrict all the charts in the parametrization as much as
  we need to and then we will use compactness to obtain a finite
  family. 
\end{remark}

Nonetheless, using compactness at this stage allows us to prove that
every \(\Lambda\)-set has finitely many connected components as
follows. 

\begin{cor}
  \label{cor:lambda-finite-components}
  Let \(A \subset \real^n\) be a \(\Lambda\)-set. There are charts
  \((Q_1, \rho_1), \dots, (Q_p, \rho_p)\) such that
  \begin{itemize}
  \item For each \(1 \leq i \leq p\), \(Q_i \subset \real^n\) is a
    sub-quadrant and \(\rho_i\) is an admissible transformation
    defined on \(Q_i\).
  \item We have
    \[A = \rho_1(Q_1) \cup \dots \cup \rho_p(Q_p).\]
  \end{itemize}
  In particular, every \(\Lambda\)-set has finitely many connected
  components. 
\end{cor}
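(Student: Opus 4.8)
The plan is to first reduce to the case of an $H$-basic set by covering $A$ appropriately, then to invoke Proposition \ref{parametrization-sub-quadrants} at finitely many centres with its built-in compactness. Since $A$ is a $\Lambda$-set, it is bounded and $H$-semianalytic, so $\clos A$ is compact. For each $a \in \clos A$, the definition of $H$-semianalyticity gives a polydisk $\polydisk n {r(a)}$ such that $(A - a) \cap \polydisk n {r(a)}$ is an $H$-set, i.e. a finite union of $H$-basic sets. By compactness, finitely many translated polydisks $a_1 + \polydisk n {r(a_1)}, \dots, a_N + \polydisk n {r(a_N)}$ cover $\clos A$, hence cover $A$. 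So $A = \bigcup_{k=1}^N A_k$, where each $A_k = (A - a_k) \cap \polydisk n {r(a_k)}$, translated back by $a_k$, is a finite union of $H$-basic sets. It therefore suffices to prove the statement when $A$ itself is a single $H$-basic set (the general chart family being the union of the finitely many chart families obtained for each $H$-basic piece, with the admissible transformations post-composed by the relevant translation $x \mapsto x + a_k$, which is an affine change of coordinates — here one should check that this still fits the notion of "chart" as used in the conclusion, or simply absorb the translation into the statement since it does not affect the connected-components count).

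Now let $A$ be $H$-basic. Apply Proposition \ref{parametrization-sub-quadrants} to obtain a family $(Q_j, \rho_j)_{j \in J}$ with $\rho_j(Q_j) \subset A$ and such that, choosing every $U_j$ to be a fixed small polydisk $\polydisk n s$ (or just $\real^n$), there is a finite $J_0 \subset J$ with $\bigcup_{j \in J_0} \rho_j(Q_j \cap U_j)$ a neighbourhood of $0$ in $A$. Taking $U_j = \real^n$ for all $j$, this says $\bigcup_{j \in J_0} \rho_j(Q_j)$ contains a neighbourhood $V \cap A$ of $0$ in $A$. The difficulty is that this only controls $A$ near $0$, whereas we need all of $A$. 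The resolution is to iterate the covering argument of the first paragraph: $A \setminus V$ is again a bounded $H$-semianalytic set (being the intersection of the $H$-basic set $A$ with the complement of the open neighbourhood $V$, which is a finite Boolean combination of the same type), so cover its closure by finitely many translated polydisks on each of which it is an $H$-set, apply Proposition \ref{parametrization-sub-quadrants} at each centre, and so on. The point is that this process terminates: each $H$-basic set around a point is, by Proposition \ref{parametrization-sub-quadrants}, a finite union of diffeomorphic images of sub-quadrants together with a lower-dimensional remainder (the locus $w = w' = 0$ handled by the induction on $n$ inside that proof), so a dimension/Noetherian induction on $n$ shows that finitely many charts suffice globally.

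The main obstacle, and the place requiring the most care, is exactly this passage from the \emph{local} conclusion of Proposition \ref{parametrization-sub-quadrants} (a neighbourhood of $0$ in $A$) to a \emph{global} finite chart decomposition of all of $A$. One clean way to organize it: prove by induction on $\dim A$ (equivalently, using the inductive structure on $n$ already present in the proof of Proposition \ref{parametrization-sub-quadrants}) that any $H$-basic set $A \subset \real^n$ is a finite union $\bigcup_i \rho_i(Q_i)$ of images of sub-quadrants. For the inductive step, the local parametrization covers a neighbourhood of each of finitely many points that together "reach" all of $A$ — using that $A$, being semianalytic and bounded with compact closure, is covered by finitely many such local pieces — and the finitely many residual lower-dimensional sets $A \cap \{w = w' = 0\}$ fall under the induction hypothesis. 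Once $A$ is written as a finite union of sets $\rho_i(Q_i)$, each $Q_i$ is connected (a product of intervals and points) and $\rho_i$ is continuous, so each $\rho_i(Q_i)$ is connected; a finite union of connected sets has finitely many connected components, giving the final assertion.
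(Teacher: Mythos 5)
Your first paragraph sets up the right ingredients, but the order in which you apply them creates a genuine gap in the second half. You first use compactness of \(\clos A\) to reduce to a single \(H\)-basic set, and only then invoke Proposition \ref{parametrization-sub-quadrants} --- at which point you correctly notice that the proposition only parametrizes a neighbourhood of \(0\) in that \(H\)-basic set, not all of it. Your proposed fix (iterate on \(A \setminus V\), and argue termination by a dimension or Noetherian induction) is not justified as written: removing a neighbourhood of one point does not decrease any dimension, so there is no induction parameter that is actually dropping, and the ``lower-dimensional remainder \(\{w = w' = 0\}\)'' you appeal to is internal to the proof of Proposition \ref{parametrization-sub-quadrants} (it concerns the exceptional locus of a blow-up, and is already absorbed into the family \((Q_j,\rho_j)_{j\in J}\) that the proposition outputs); it says nothing about \(A \setminus V\). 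As stated, your iteration could a priori run forever.

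The intended argument, which is what the paper does, applies the two tools in the opposite order. For each point \(a \in \clos A\), the set \((A-a)\cap \polydisk n {r(a)}\) is an \(H\)-set, so Proposition \ref{parametrization-sub-quadrants} (applied to each of its finitely many \(H\)-basic pieces, with \(U_j = \real^n\)) produces finitely many charts together with an open neighbourhood \(V_a\) of \(a\) such that \(V_a \cap A\) is exactly the union of their images. It is these neighbourhoods \(V_a\), produced \emph{by} the proposition, that form the open cover of the compact set \(\clos A\) from which one extracts a finite subcover; the union of the corresponding finitely many chart families then equals \(A\), with no iteration needed. (Your side remark about the translations \(x \mapsto x + a_k\) not literally being admissible transformations is a fair observation about the statement itself, but as you say it is harmless for the only conclusion that is reused, namely that each piece is connected and hence \(A\) has finitely many connected components.)
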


\begin{proof}
  Since \(A\) is locally an \(H\)-set around every point of \(\clos
  A\), the result follows at once from the proposition and the
  compactness of \(\clos A\). 
\end{proof}

The parametrization part of the corollary will not be reused later, it
serves only to show that \(\Lambda\)-sets have finitely many connected
components. 

\begin{defn}
  A set \(M \subset \real^n\) is called an \textbf{\(H\)-manifold} when there are a
  polydisk \(\polydisk n r\) and functions \(f_1, \dots, f_{n-d}, g_1, \dots,
  g_q \in \germalg n\) defined and \(C^1\) on \(\polydisk n r\) such
  that \(\grad f_1(x), \dots, \grad f_{n-d}(x)\) are linearly independent for
  every \(x \in \polydisk n r\) and
  \[M = \{x \in \polydisk n r \for f_1(x) = \dots = f_{n-d}(x) = 0,\
    g_1(x) > 0,\ \dots,\ g_q(x) > 0\}.\]
\end{defn}

\begin{remark}
  Assume that \(\rho\) is an admissible transformation defined on a
  sub-quadrant \(Q\). Then, the set
  \begin{align*}
    M & = \{(y, x) \in \real^n \times \real^n \for y_1 - \rho_1(x) =
        \dots = y_n - \rho_n(x) = 0, x \in Q\} \\
    &= \{(\rho(x), x) \for x \in Q\} \subset \real^{2n} 
  \end{align*}
  is an \(H\)-manifold. Furthermore, if \(\rho \restriction Q\) is a
  diffeomorphism onto its image, then \(\Pi_n \restriction M\) is also
  a diffeomorphism onto its image. 
\end{remark}

This remark provides us at once with the following restatement of
Proposition \ref{parametrization-sub-quadrants}

\begin{cor}
  \label{conj-local-parametrization}
  Let \(A \subset \real^n\) be an \(H\)-basic set. There is a family
  \((M_j)_{j \in J}\) of \(H\)-manifolds such that
  \begin{itemize}
  \item For every \(j \in J\), there is \(k_j \geq 0\) such that \(M_j
    \subset \real^{n+k_j}\). Furthermore, the projection \(\Pi_n
    \restriction M_j\) is a diffeomorphism onto its image. 
  \item If \(U_j \subset \real^{n+k_j}\) is a neighborhood of \(0\)
    for every \(j \in J\), then there is a finite subset \(J_0 \subset
    J\) such that \(\bigcup_{j \in J_0} \Pi_n(M_j \cap U_j)\) is a
    neighborhood of \(0\) in \(A\). 
  \end{itemize}
\end{cor}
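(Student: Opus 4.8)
The statement to prove is Corollary \ref{conj-local-parametrization}. Let me think through how to derive it from Proposition \ref{parametrization-sub-quadrants} and the Remark immediately preceding it.

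The plan is to read the corollary off directly from Proposition \ref{parametrization-sub-quadrants} by feeding each chart $(Q,\rho)$ it produces into the Remark just above, which converts such a chart into an $H$-manifold sitting inside $\real^{2n}$. Concretely, I would first apply Proposition \ref{parametrization-sub-quadrants} to the $H$-basic set $A$, obtaining a family $(Q_j,\rho_j)_{j\in J}$ with each $Q_j\subset\real^n$ a sub-quadrant, each $\rho_j$ an admissible transformation on $Q_j$, and $\rho_j\restriction Q_j$ a diffeomorphism onto its image $\rho_j(Q_j)\subset A$. For each $j$, the Remark applied to $(Q_j,\rho_j)$ yields the $H$-manifold
\[
  M_j=\{(\rho_j(x),x)\for x\in Q_j\}\subset\real^{2n}
\]
(so one takes $k_j=n$ for every $j$), and asserts that $\Pi_n\restriction M_j$ is a diffeomorphism onto its image, precisely because $\rho_j\restriction Q_j$ is one. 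Since $\Pi_n(\rho_j(x),x)=\rho_j(x)$, we get $\Pi_n(M_j)=\rho_j(Q_j)\subset A$, which already establishes the first bullet and forces $\bigcup_{j\in J_0}\Pi_n(M_j\cap U_j)\subset A$ for any finite $J_0$ and any choice of sets $U_j$.

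For the second bullet the only point to reconcile is that Proposition \ref{parametrization-sub-quadrants} speaks of neighborhoods of $0$ in the source space $\real^n$, whereas the sets $U_j$ of the corollary live in the ambient space $\real^{2n}$ of the $M_j$. Given such $U_j$, I would use the continuity of $\psi_j\colon Q_j\to M_j$, $x\mapsto(\rho_j(x),x)$, together with $\rho_j(0)=0$, to choose for each $j\in J$ a neighborhood $V_j\subset\real^n$ of $0$ with $\psi_j(Q_j\cap V_j)\subset U_j$; then $\psi_j(Q_j\cap V_j)\subset M_j\cap U_j$ and hence $\rho_j(Q_j\cap V_j)=\Pi_n(\psi_j(Q_j\cap V_j))\subset\Pi_n(M_j\cap U_j)$. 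Applying the second bullet of Proposition \ref{parametrization-sub-quadrants} to the family $(V_j)_{j\in J}$ gives a finite $J_0\subset J$ and an open $V\ni 0$ with $\bigcup_{j\in J_0}\rho_j(Q_j\cap V_j)=V\cap A$. Combining the inclusions yields $V\cap A\subset\bigcup_{j\in J_0}\Pi_n(M_j\cap U_j)\subset A$, so this finite union is a neighborhood of $0$ in $A$ and in fact coincides with $A$ on $V$.

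I do not expect a genuine obstacle here: once Proposition \ref{parametrization-sub-quadrants} and the Remark are available, the argument is essentially bookkeeping, which is why the text calls the corollary a restatement. The only steps that warrant a little care are the passage between neighborhoods of $0$ in $\real^n$ and in $\real^{2n}$, handled by the continuity of $\psi_j$ above, and the harmless circumstance that $0$ need not belong to $A$, so that \enquote{neighborhood of $0$ in $A$} must be understood in the germ sense already used in Proposition \ref{parametrization-sub-quadrants}.
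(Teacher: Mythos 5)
Your proposal is correct and follows essentially the same route as the paper: the paper's proof also just sets \(M_j = \{(\rho_j(x), x) \for x \in Q_j\}\) from the charts of Proposition \ref{parametrization-sub-quadrants} and invokes the preceding Remark, declaring the verification clear. Your extra step converting the neighborhoods \(U_j \subset \real^{2n}\) into neighborhoods \(V_j \subset \real^n\) via continuity of \(x \mapsto (\rho_j(x), x)\) is exactly the bookkeeping the paper leaves implicit.
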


\begin{proof}
  Let \((Q_j, \rho_j)_{j \in J}\) be the family provided by
  Proposition \ref{parametrization-sub-quadrants}. For each \(j \in
  J\), we define \(M_j = \{(\rho_j(x), x) \for x \in Q_j\}\). It is
  clear that the family \((M_j)_{j \in J}\) satisfies the conditions
  of the corollary.
\end{proof}

\begin{remark}
  The set \(J\) in the statement of the corollary might be infinite.
\end{remark}

The next step in the proof is to refine the statement of Corollary
\ref{conj-local-parametrization} by improving on the regularity of the
manifolds \(M_j\). The result we have in mind is Proposition
\ref{local-parametrization-constant-rank} and we will build up to it
over the course of the next two paragraphs.

\subsection{A parametrization such that \(\Pi_n\) has constant rank}

Consider \(A \subset \real^{n+k}\) an \(H\)-basic set. By applying
Corollary \ref{conj-local-parametrization} to \(A\), we
obtain a family \((M_j)_{j \in J}\) of
\(H\)-manifolds. Given \(j \in J\), we already know that \(\Pi_{n+k}
\restriction M_j\) is a diffeomorphism onto its image. Our goal in
this section will be to show that we can always choose the family
\((M_j)\) so that it is also true that the projections \(\Pi_n
\restriction M_j\) have constant rank. To be more precise, this
paragraph is devoted to proving the following proposition.

\begin{prp}
  \label{local-parametrization-first-improvement}
  Let \(A \subset \real^{n+k}\) be an \(H\)-basic set.
  Given \(0 \leq l \leq n\), there is a family
  \((M_j)_{j \in J}\) of \(H\)-manifolds such that
  \begin{itemize}
  \item For every \(j \in J\), there is \(h_j \geq 0\) such that \(M_j
    \subset \real^{n+k+h_j}\). Furthermore, the projection \(\Pi_{n+k}
    \restriction M_j\) is a diffeomorphism onto its image.
  \item If \(U_j \subset \real^{n+k+h_j}\) is a neighborhood of \(0\)
    for every \(j \in J\), then there is a finite subset \(J_0 \subset
    J\) such that \(\bigcup_{j \in J_0} \Pi_{n+k}(M_j \cap U_j)\) is a
    neighborhood of \(0\) in \(A\).
  \item For every \(j \in J\), the projection \(\Pi_n \restriction
    M_j\) has constant rank. 
  \end{itemize}
\end{prp}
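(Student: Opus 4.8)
The plan is to prove the statement with $\Pi_l$ in place of $\Pi_n$ in the third item, so that the stated proposition is the case $l = n$; this is what a short finite iteration will naturally produce. First I would apply Corollary \ref{conj-local-parametrization} to $A$ to get a family $(M_j)$ of $H$-manifolds satisfying the first two items, each with $\Pi_{n+k} \restriction M_j$ a diffeomorphism onto its image. It is then enough to explain how to replace a single such $H$-manifold $M$ by a family of $H$-manifolds covering it, each still having $\Pi_{n+k}$ a diffeomorphism but now having $\Pi_l$ of constant rank; the individual covering properties then reassemble into the second item for $A$ by a covering-of-coverings argument, using the finite-subfamily clause of Corollary \ref{conj-local-parametrization} (and the compactness of $\real \cup \{\infty\}$ invoked there) rather than finiteness at intermediate stages, since the families in play may be infinite.

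For the single-manifold step, write $M = \{x \in \polydisk{N}{r} \for f_1(x) = \dots = f_{N-d}(x) = 0,\ g_1(x) > 0,\ \dots,\ g_q(x) > 0\}$ with the $\grad f_i$ linearly independent, $N$ being the ambient dimension of $M$. I would stratify $M$ by the rank of $\Pi_l \restriction M$, using the identity $\rank{x}{\Pi_l \restriction M} = d - (N - l) + \rankaux\, B(x)$, where $B(x)$ is the $(N-d)\times(N-l)$ matrix with entries $\frac{\partial f_i}{\partial x_k}(x)$ for $1 \leq i \leq N-d$ and $l < k \leq N$. The point that makes this usable is that $\germalg{N}$ is stable under partial differentiation: if $f \in \germalg{N}$ then $(x, t) \mapsto f(x_1, \dots, x_k + t, \dots, x_N) - f(x)$ lies in $\germalg{N+1}$ by composition, vanishes on $\{t = 0\}$, so dividing it by $t$ and then substituting $t = 0$—both operations sanctioned by Definition \ref{defn:smallest-algebras}—exhibits $\frac{\partial f}{\partial x_k}$ in $\germalg{N}$. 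Hence the entries of $B$, and so all its minors, lie in $\germalg{N}$. Setting $R = \max_{x \in M}\rank{x}{\Pi_l \restriction M}$, the top stratum $M^{\mathrm{top}} = \{x \in M \for \rank{x}{\Pi_l \restriction M} = R\}$ is exactly the locus where $\rankaux\, B$ attains its maximal value $\rho$ on $M$, i.e.\ where one of finitely many $\rho \times \rho$ minors of $B$ is nonzero; it is therefore an open $H$-submanifold of $M$—a finite union of $H$-manifolds, each cut out by the $f_i$, the $g_i$ and one inequality $\pm m > 0$ for a minor $m$—on which $\Pi_l$ has constant rank $R$, while the complement $M^{\mathrm{low}} = M \setminus M^{\mathrm{top}}$ is $H$-basic (vanishing of all those minors, merged with the $f_i$ into one equation by summing squares).

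I would then keep $M^{\mathrm{top}}$ and apply Corollary \ref{conj-local-parametrization} once more to $M^{\mathrm{low}}$, regarded as an $H$-basic subset of $\real^N$, producing $H$-manifolds $M'$ with $\Pi_N \restriction M'$ a diffeomorphism onto a submanifold of $M^{\mathrm{low}} \subset M$. Composing embeddings keeps $\Pi_{n+k} \restriction M'$ a diffeomorphism onto its image, and since $\Pi_l = \Pi_l \circ \Pi_N$ and $\Pi_l(T_y(\Pi_N(M'))) \subset \Pi_l(T_y M)$ for $y \in \Pi_N(M')$, every such $M'$ satisfies $\rank{}{\Pi_l \restriction M'} \leq R - 1$. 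Iterating—at each round, split every remaining piece of non-constant $\Pi_l$-rank into its open top stratum (kept, with constant rank) and the strictly-lower-rank $H$-basic part (re-parametrized as above, lowering the rank bound by at least one)—the process terminates after at most $l$ rounds, since the $\Pi_l$-rank is always $\leq l$; what remains is a family of $H$-manifolds with $\Pi_{n+k}$ a diffeomorphism, $\Pi_l$ of constant rank, and the required covering property.

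I expect the main obstacle to be twofold. First, one must check carefully that the covering property survives the re-parametrization of the closed strata $M^{\mathrm{low}}$: Corollary \ref{conj-local-parametrization} provides an honest (not merely generic) cover of $M^{\mathrm{low}}$ near $0$, which together with the open piece $M^{\mathrm{top}}$ covers $M$ near $0$, so the covers compose, but the bookkeeping of nested neighborhoods has to be threaded through the finite-subfamily clause because intermediate families are infinite. Second, and more conceptually, this step is exactly where the hypotheses of the paper bite: over an analytic manifold the rank of a coordinate projection is generically constant and the strata $M^{\mathrm{low}}$ would be negligible, but with only weak smoothness available this fails, so one genuinely must iterate, and the crux is precisely that the tangent-space inclusion yields the strict bound $R - 1$ and hence a well-founded descent.
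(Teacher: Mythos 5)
Your proposal is correct and follows essentially the same route as the paper: stratify \(M\) by the rank of the projection using determinants of functions whose germs lie in \(\germalg{N}\), keep the open constant-rank stratum, re-parametrize the lower-rank \(H\)-basic remainder via Corollary \ref{conj-local-parametrization}, and use the tangent-space inclusion \(T_y(\Pi_N(M')) \subset T_y M\) to get the strict drop in rank that makes the iteration terminate --- this is exactly the content of the paper's Lemmas \ref{trivial-constant-rank}--\ref{local-parametrization-constant-rank-lemma}. The only substantive variation is that the paper detects the rank via determinants of projected tangent-frame vectors built by Gram-Schmidt (Proposition \ref{basis}) rather than via minors of the Jacobian of the defining equations; both need the same closure properties of \(\germalg{N}\), and your difference-quotient argument for closure under partial differentiation is sound.
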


In order to do so, we start
with the parametrization given in Corollary 
\ref{conj-local-parametrization} and we further
refine it by monomializing various determinants involving a basis for
\(\tangent x {M_j}\) for each \(j \in J\). Thus, the first step is to
give a basis of the tangent plane of an \(H\)-manifold in a
neighborhood of \(0\) consisting of functions in \(\germalg n\).

\begin{prp}
  \label{basis}
  Consider \(M \subset \real^n\) an \(H\)-manifold of dimension
  \(d\). There are a polydisk \(\polydisk n r\) and functions \(b_1, \dots, b_d
  \colon \polydisk n r \to \real^n\), all of whose components are in
  \(\germalg n\), and such that \(b_1(x),
  \dots, b_d(x)\) is a basis of \(\tangent x M\) for
  every \(x \in M 
  \cap \polydisk n r\).
\end{prp}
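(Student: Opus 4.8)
The plan is to write the $H$-manifold $M$ in a neighborhood of $0$ as the common zero set of functions $f_1,\dots,f_{n-d}\in\germalg n$ with linearly independent gradients, together with inequalities $g_1>0,\dots,g_q>0$, exactly as in the definition of $H$-manifold. Since $\grad f_1(0),\dots,\grad f_{n-d}(0)$ are linearly independent, after permuting coordinates I may assume that the $(n-d)\times(n-d)$ minor of the Jacobian matrix $J=(\partial f_i/\partial x_j)$ corresponding to the \emph{last} $n-d$ columns is invertible at $0$; by continuity it stays invertible on some polydisk $\polydisk n r$. Write $x=(x',x'')$ with $x'\in\real^d$, $x''\in\real^{n-d}$, and let $J'=(\partial f_i/\partial x'_j)$ and $J''=(\partial f_i/\partial x''_j)$ be the corresponding blocks, so $J''$ is invertible on $\polydisk n r$.

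The tangent space $\tangent x M$ at a point $x\in M\cap\polydisk n r$ is exactly $\ker J(x)$, which is $d$-dimensional. A natural basis is obtained by solving: a vector $v=(v',v'')$ lies in $\ker J$ iff $J'v'+J''v''=0$, i.e.\ $v''=-(J'')^{-1}J'v'$. Taking $v'=e_1,\dots,e_d$ the standard basis vectors of $\real^d$ gives the columns
\[
  b_i(x)=\bigl(e_i,\ -(J''(x))^{-1}J'(x)e_i\bigr),\qquad 1\le i\le d,
\]
which are manifestly linearly independent (their first $d$ coordinates form the identity matrix) and span $\tangent x M$ for every $x\in M\cap\polydisk n r$. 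It remains to check that every component of each $b_i$ is in $\germalg n$. The entries of $J'$ and $J''$ are partial derivatives of the $f_i$; since $\germalg n$ is closed under partial differentiation — indeed each $\partial f_i/\partial x_j$ lies in $\germalg n$ because $f_i$ vanishes nowhere-constraint is not needed here, this is just the fact that $\germalg n$ consists of germs in $\mathcal W_n$ stable under the operations of Definition \ref{defn:smallest-algebras}, and closure under $\partial/\partial x_j$ follows from those axioms — the entries of $J'$, $J''$ are in $\germalg n$. The entry in position $(k,l)$ of $(J'')^{-1}$ is, by Cramer's rule, $(-1)^{k+l}\det(J'')_{lk}/\det(J'')$, a ratio of polynomials in the entries of $J''$; since $\det(J'')(0)\neq 0$, composing with the rational function $t\mapsto 1/t$ (which is in $\germalg 1$ near any nonzero value, as it is algebraic, or via monomial division of a suitable combination) and using closure of $\germalg n$ under composition and ring operations, each entry of $(J'')^{-1}$ is in $\germalg n$. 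Hence so are the entries of $-(J'')^{-1}J'$, and therefore all components of the $b_i$.

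The only mild subtlety — the step I expect to need the most care — is justifying that $\germalg n$ is closed under the partial derivative operation and contains reciprocals of germs that are nonzero at $0$. The latter is standard: if $u\in\germalg n$ with $u(0)=c\neq 0$, then $v=u-c$ vanishes at $0$, and one writes $1/u=\frac1c\sum(-v/c)^k$ formally, but more cleanly one uses that the germ $t\mapsto 1/t$ near $t=c$ is algebraic hence (being a polynomial-root germ obtained from the implicit function axiom applied to $ty-1$) lies in $\germalg 1$, so $1/u=(1/t)\circ u\in\germalg n$ by composition. Closure under $\partial/\partial x_j$ is part of the standing setup (germs in $\germalg n$ are weakly $C^\infty$ and the algebras are differential); if one prefers to avoid relying on it, one can instead take $f_1,\dots,f_{n-d}$ from the start with the property that all their partials are in $\germalg n$, which holds because the defining functions of an $H$-manifold can always be chosen this way in the constructions of the previous section. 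With the $b_i$ in hand and $\polydisk n r$ chosen small enough that $J''$ is invertible throughout, the proposition follows.
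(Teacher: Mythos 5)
Your argument is correct and rests on the same core idea as the paper's: the tangent space \(\tangent x M\) is controlled by the Jacobian of the defining functions \(f_1, \dots, f_{n-d}\), whose entries are taken to lie in \(\germalg n\), and a basis is extracted by ring operations together with division by a function that does not vanish near \(0\), which stays inside \(\germalg n\) via the implicit-function axiom. The mechanism differs, though. You realize \(\tangent x M\) as \(\ker J(x)\) and solve the block system \(J'v' + J''v'' = 0\) by Cramer's rule after permuting coordinates so that \(J''(0)\) is invertible; the paper instead completes \(\grad f_1(x), \dots, \grad f_{n-d}(x)\) to a basis of \(\real^n\) by constant vectors \(e_1, \dots, e_d\), applies Gram--Schmidt, and keeps the last \(d\) output vectors, which lie in the orthogonal complement of the span of the gradients, i.e.\ in \(\tangent x M\). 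Your version is somewhat more economical (no orthonormalization, and the resulting \(b_i\) are visibly independent since their first \(d\) coordinates form the identity), and your explicit justification that reciprocals of units belong to \(\germalg n\) is more careful than the paper's ``dividing by non-zero functions.'' The paper's variant has the advantage that essentially the same computation is recycled in Lemma \ref{basis-fibers}, where one genuinely needs the projections \(\Pi_n \widetilde{a_j}(z)\) to be orthonormal --- something the kernel/Cramer description does not directly supply. Finally, the caveat you flag about the partial derivatives \(\partial f_i/\partial x_j\) belonging to \(\germalg n\) is a real point, but the paper's own proof makes the identical implicit assumption the moment it feeds \(\grad f_i(x)\) into Gram--Schmidt, so it is not a gap specific to your argument.
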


\begin{proof}
  Consider functions \(f_1, \dots, f_{n-d},
  g_1, \dots, g_q \in \germalg n\) defined and \(C^1\) on
  a polydisk \(\polydisk n r\) such that \(\grad f_1(x), \dots, \grad
  f_{n-d}(x)\) are independent for every \(x \in \polydisk n r\) and
  \[M = \{x \in \polydisk n r \for f_1(x) = \dots = f_{n-d}(x) = 0,\
    g_1(x) > 0,\ \dots,\ g_q(x) > 0\}.\]
  Now, let \(e_1, \dots, e_d \in \real^n\) be vectors such that
  \(\grad f_1(0), \dots, \grad f_{n-d}(0), e_1, \dots, e_d\) is a
  basis of \(\real^n\). By continuity of \(\det (\grad
  f_1(x), \dots, \grad f_{n-d}(x), e_1, \dots, e_d)\), we can
  assume that \(\grad f_1(x), \dots, \grad f_{n-d}(x), e_1, \dots,
  e_d\) is a basis of \(\real^n\) for every \(x \in \polydisk n
  r\) up to shrinking \(\polydisk n r\).

  For each \(x \in \polydisk n r\), we let \(a_1(x), \dots,
  a_{n-d}(x), b_1(x), \dots, b_d(x)\) be the result of applying the
  Gram-Schmidt orthonormalization process to the basis \(\grad f_1(x),
  \dots, \grad f_{n-d}(x), e_1, \dots, e_d\). Since orthonormalization
  only involves taking sums, products and dividing by non-zero
  functions, it follows that all the components of the functions
  \(a_1, \dots, a_{n-d}, b_1, \dots, b_d\) are in \(\germalg
  n\). Given \(x \in M \cap \polydisk n r\), the space \(\tangent x M\)
  is the orthogonal of the space \(V\) spanned by the vectors \(\grad
  f_1(x), \dots, \grad f_{n-d}(x)\). Now, let \(1 \leq j \leq d\) be
  an integer. Since \(V\) is also spanned by
  \(a_1(x), \dots, a_{n-d}(x)\) and since \(b_j(x)\) is orthogonal to
  each of these vectors, it follows that \(b_j(x) \in \tangent
  x M\). Thus, the vectors \(b_1(x), \dots, b_d(x)\) make up a
  linearly independent family in \(\tangent x M\). Since
  \(\dim(\tangent x M) = d\), this family must be a basis.
\end{proof}

The rank of the projection \(\Pi_n\) on an \(H\)-manifold \(M \subset
\real^{n+k}\) can be computed by looking at the sign of
various determinants. By monomializing the determinants in question,
we can cut up \(M\) into smaller pieces on which each
determinant has constant sign. In particular, we can guarantee that
\(\Pi_n\) has constant rank on the pieces with ``large'' rank. 

\begin{lem}
  \label{trivial-constant-rank}
  Let \(M \subset \real^{n+k}\) be an \(H\)-manifold with
  \(\rank x {\Pi_n \restriction M} \leq l\) for every \(x \in
  M\). Then, there are \(H\)-manifolds \(M_1, \dots, M_p\) and
  an \(H\)-basic set \(M'\) such that \(\Pi_n \restriction M_i\) has
  constant rank \(l\) for every \(1 \leq i \leq p\), \(\rank x {\Pi_n
    \restriction M} < l\) for every \(x \in M'\) and \(M_1 \cup
  \dots \cup M_p \cup M'\) is a neighborhood of \(0\) in \(M\). 
\end{lem}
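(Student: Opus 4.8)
The plan is to split a neighborhood of $0$ in $M$ directly, according to which $l\times l$ minors of the differential of $\Pi_n\restriction M$ vanish; no monomialization is needed for this particular statement. Write $d=\dim M$. First I would apply Proposition \ref{basis} to $M$, obtaining a polydisk $\polydisk{n+k}{r}$ and maps $b_1,\dots,b_d\colon\polydisk{n+k}{r}\to\real^{n+k}$ all of whose components lie in $\germalg{n+k}$, with $b_1(x),\dots,b_d(x)$ a basis of $\tangent x M$ for every $x\in M\cap\polydisk{n+k}{r}$. For $x\in\polydisk{n+k}{r}$ let $B(x)$ be the $n\times d$ matrix whose $j$-th column is $\Pi_n(b_j(x))$, i.e.\ the first $n$ coordinates of $b_j(x)$. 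Because $\Pi_n$ is linear, the differential of $\Pi_n\restriction M$ at a point $x\in M\cap\polydisk{n+k}{r}$ is the restriction of $\Pi_n$ to $\tangent x M$, whose matrix in the basis $(b_j(x))_j$ and the standard basis of $\real^n$ is exactly $B(x)$; hence $\rank x{\Pi_n\restriction M}=\operatorname{rank}B(x)$, which by hypothesis is at most $l$ for every $x\in M$.

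I would then form the list $\delta_1,\dots,\delta_N$ of all $l\times l$ minors of $B$ (one for each choice of $l$ rows among $n$ and $l$ columns among $d$; if $l>\dim M$ the hypothesis is automatic and the rank is $<l$ everywhere, so one simply takes $M'=M\cap\polydisk{n+k}{r}$ and no $M_i$'s). Each $\delta_s$ is a polynomial in the components of the $b_j$, so $\delta_s\in\germalg{n+k}$; after shrinking $\polydisk{n+k}{r}$ finitely often, and using that germs in $\germalg{n+k}$ are weakly $C^\infty$ and hence $C^1$, I may assume every $\delta_s$ is $C^1$ on $\polydisk{n+k}{r}$. Since $M$ is an $H$-manifold, write $M\cap\polydisk{n+k}{r}=\{x\in\polydisk{n+k}{r}\for f_1(x)=\dots=f_{n+k-d}(x)=0,\ g_1(x)>0,\dots,g_q(x)>0\}$ with $\grad f_1,\dots,\grad f_{n+k-d}$ independent on $\polydisk{n+k}{r}$, and set $g_0=f_1^2+\dots+f_{n+k-d}^2+\delta_1^2+\dots+\delta_N^2\in\germalg{n+k}$. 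I would then define, for $1\leq s\leq N$,
\begin{align*}
  M' &= \{x\in\polydisk{n+k}{r}\for g_0(x)=0,\ g_1(x)>0,\dots,g_q(x)>0\}, \\
  M_s^{\pm} &= \{x\in\polydisk{n+k}{r}\for f_1(x)=\dots=f_{n+k-d}(x)=0,\ g_1(x)>0,\dots,g_q(x)>0,\ \pm\delta_s(x)>0\}.
\end{align*}
Then $M'$ is an $H$-basic set (the zero set of $g_0$ is, by the sum-of-squares trick, the common zero set of the $f_i$ and the $\delta_s$), and each $M_s^{\pm}$ is an $H$-manifold of dimension $d$ (the system of equations and the independence of its gradients are untouched; only the inequality $\pm\delta_s>0$ has been adjoined, with $\delta_s\in\germalg{n+k}$ and $C^1$ on the polydisk). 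By the characterization of matrix rank, for $x\in M\cap\polydisk{n+k}{r}$ one has $\operatorname{rank}B(x)=l$ precisely when $\delta_s(x)\neq 0$ for some $s$ — here the hypothesis $\operatorname{rank}B(x)\leq l$ is what turns ``$\geq l$'' into ``$=l$'' — and $\operatorname{rank}B(x)<l$ precisely when $\delta_s(x)=0$ for all $s$. Consequently $\rank x{\Pi_n\restriction M}<l$ for every $x\in M'$, while for $x\in M_s^{\pm}$, which is open in $M$ and hence satisfies $\tangent x{M_s^{\pm}}=\tangent x M$, we get $\rank x{\Pi_n\restriction M_s^{\pm}}=\operatorname{rank}B(x)=l$, so $\Pi_n\restriction M_s^{\pm}$ has constant rank $l$. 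Finally $M'\cup\bigcup_{s=1}^{N}(M_s^{+}\cup M_s^{-})=M\cap\polydisk{n+k}{r}$, a neighborhood of $0$ in $M$; relabelling the $2N$ manifolds $M_s^{\pm}$ as $M_1,\dots,M_p$ finishes the proof.

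I do not expect a serious obstacle — this is the ``trivial'' constant-rank statement, and the point is precisely that the naive decomposition already works. The only things that need a moment's care are that each minor $\delta_s$ genuinely lies in $\germalg{n+k}$ and admits a $C^1$ representative on a polydisk (clear, since $\germalg{n+k}$ is a subalgebra of the weakly $C^\infty$ germs, closed under polynomial operations), and that adjoining a single inequality to the defining data of an $H$-manifold again produces an $H$-manifold of the same dimension — the gradient-independence condition concerning the $f_i$ alone. One could instead follow the route hinted at in the text, $*$-monomializing $\delta_1,\dots,\delta_N$ by Theorem \ref{*-monomialization} and appealing to Proposition \ref{parametrization-sub-quadrants}; but that would output pieces parametrized by sub-quadrants rather than honest open submanifolds of $M$, which is more structure than this statement requires and is exactly the additional regularity that the subsequent results (building toward Proposition \ref{local-parametrization-first-improvement}) are designed to extract.
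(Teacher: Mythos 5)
Your proposal is correct and follows essentially the same route as the paper: both apply Proposition \ref{basis} to get a tangent-frame with components in \(\germalg{n+k}\), cut \(M\) according to the non-vanishing of the \(l\times l\) minors of the projected frame (your \(M_s^{\pm}\) are exactly the two \(H\)-manifolds whose union the paper calls \(M_{\iota,\kappa}\)), and take \(M'\) to be the common zero set of all these minors. Your write-up merely spells out a few details the paper leaves implicit (the sum-of-squares encoding of \(M'\) as an \(H\)-basic set and the rank bookkeeping).
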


\begin{proof}
  By Proposition \ref{basis}, there are a polydisk \(\polydisk {n+k}
  r\) as well as functions \(b_1, \dots, b_d \colon \polydisk {n+k} r
  \to \real^{n+k}\), all of whose components are in \(\germalg
  {n+k}\), and such that \(b_1(x), \dots, b_d(x)\) is a basis
  of \(\tangent x M\) for every \(x \in M \cap \polydisk {n+k}
  r\). Without loss of generality, we can assume that \(M \subset
  \polydisk {n+k} r\). Given
  two strictly increasing functions \(\iota \colon \{1, \dots, l\} \to
  \{1, \dots, n\}\) and \(\kappa \colon \{1, \dots, l\} \to \{1, \dots
  d\}\), we can consider the \(H\)-basic set
  \[M_{\iota, \kappa} = \{x \in M \for \det(\Pi_\iota(b_{\kappa(1)}(x)), \dots,
    \Pi_\iota(b_{\kappa(l)}(x))) \neq 0\}\]
  Notice that \(M_{\iota, \kappa}\) is the union of two \(H\)-manifolds and
  that \(\Pi_n \restriction M_{\iota, \kappa}\) has constant rank
  \(l\). Furthermore, the set
  \[M' = M \setminus \bigcup_{\iota, \kappa} M_{\iota, \kappa}\]
  is \(H\)-basic. Finally, for \(x \in M'\), we have \(\rank x {\Pi_n
  \restriction M} < l\). 
\end{proof}

In Lemma \ref{rank-cutting-parametrization} below, we refine the statement of the lemma above by
using Corollary \ref{conj-local-parametrization} in order to
parametrize the set \(M'\). We can thus replace \(M'\) with a family of
\(H\)-manifolds such that the rank of \(\Pi_n\) on each
of these manifolds is ``small''. 

\begin{lem}
  \label{rank-cutting-parametrization}
  Consider \(M \subset \real^{n+k}\) an \(H\)-manifold such that
  \(\rank x {\Pi_n \restriction M} \leq l\) for every \(x \in
  M\). Then, there is a family \((M_j)_{j \in J}\) of
  \(H\)-manifolds such that
  \begin{itemize}
  \item For every \(j \in J\), there is \(h_j \geq 0\) such that \(M_j
    \subset \real^{n+k+h_j}\). Furthermore, the projection \(\Pi_{n+k}
    \restriction M_j\) is a diffeomorphism onto its image. 
  \item If \(U_j \subset \real^{n+k_j}\) is a neighborhood of \(0\)
    for every \(j \in J\), then there is a finite subset \(J_0 \subset
    J\) such that \(\bigcup_{j \in J_0} \Pi_{n+k}(M_j \cap U_j)\) is a
    neighborhood of \(0\) in \(A\). 
  \item For every \(j \in J\), either \(\Pi_n \restriction M_j\) has
    constant rank \(l\) or \(\rank x {\Pi_n \restriction M_j} < l\)
    for every \(x \in M_j\). 
  \end{itemize}
\end{lem}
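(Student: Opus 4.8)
The plan is to argue by induction on the dimension $d$ of $M$, combining the one-step rank stratification of Lemma \ref{trivial-constant-rank} with the parametrization of Corollary \ref{conj-local-parametrization}. Two elementary facts will do the geometric work: (i) if $P$ is a submanifold of $M$ then $\rank x {\Pi_n \restriction P} = \dim \Pi_n(\tangent x P) \leq \dim \Pi_n(\tangent x M) = \rank x {\Pi_n \restriction M}$ for every $x \in P$, with equality whenever $P$ is open in $M$; and (ii) a submanifold of $M$ of dimension $\dim M$ is automatically open in $M$. I will also use that an $H$-manifold is itself an $H$-basic set, since $f_1 = \dots = f_{n-d} = 0$ is equivalent to $f_1^2 + \dots + f_{n-d}^2 = 0$ and $\germalg n$ is an algebra; this is what lets the manifolds produced by Lemma \ref{trivial-constant-rank} be fed back into Corollary \ref{conj-local-parametrization}.

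Writing $N = n+k$, the inductive step would go as follows. First apply Lemma \ref{trivial-constant-rank} to $M$, obtaining $H$-manifolds $M_1, \dots, M_p$ which are open in $M$ (hence of dimension $d$) and on which $\Pi_n$ has constant rank $l$, together with an $H$-basic set $M' \subset \real^N$ on which $\rank x {\Pi_n \restriction M} < l$, their union being a neighborhood of $0$ in $M$. Apply Corollary \ref{conj-local-parametrization} to $M'$: the resulting $H$-manifolds $P_\ell$ project via $\Pi_N$ diffeomorphically onto submanifolds of $M' \subset M$, so fact (i) forces $\rank x {\Pi_n \restriction P_\ell} < l$ for all $x \in P_\ell$, and these already satisfy the dichotomy. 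Then apply Corollary \ref{conj-local-parametrization} to each $M_i$: the resulting $H$-manifolds $Q_{i,\ell}$ project via $\Pi_N$ diffeomorphically onto submanifolds of $M_i$. Those $Q_{i,\ell}$ of dimension $d$ project onto open subsets of $M_i$, hence inherit constant rank $l$ (pull the rank back along the diffeomorphism $\Pi_N \restriction Q_{i,\ell}$) and are kept unchanged; those of dimension $< d$ satisfy $\rank x {\Pi_n \restriction Q_{i,\ell}} \leq \rank x {\Pi_n \restriction M_i} = l$ by fact (i), so the inductive hypothesis applies and replaces each by a family $R_{i,\ell,m}$ of $H$-manifolds with the required properties (composing projections keeps $\Pi_N$ a diffeomorphism onto its image inside $M_i$). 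The family exhibited by the lemma is then the union of the $P_\ell$, the top-dimensional $Q_{i,\ell}$, and all the $R_{i,\ell,m}$. The base case $d = 0$ is immediate, since a zero-dimensional $H$-manifold has $\Pi_n$ of rank $0$ everywhere, so the singleton family $(M)$ works.

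The hard part will be verifying the covering property, which amounts to chaining together the "robust covering" statements: $M$ near $0$ is covered by the $M_i$ and $M'$ (Lemma \ref{trivial-constant-rank}), each $M_i$ near $0$ is robustly covered by its $Q_{i,\ell}$, each low-dimensional $Q_{i,\ell}$ near $0$ is robustly covered by its $R_{i,\ell,m}$, and $M'$ near $0$ is robustly covered by the $P_\ell$. Given arbitrary neighborhoods of $0$ for all members of the exhibited family, I would first use the inductive covering to extract, for each low-dimensional $Q_{i,\ell}$, a finite subfamily of the $R_{i,\ell,m}$ whose shrunk $\Pi_N$-images cover $W_{i,\ell} \cap \Pi_N(Q_{i,\ell})$ for some open $W_{i,\ell} \ni 0$; then feed the neighborhood $\Pi_N^{-1}(W_{i,\ell})$ of $0$ into the covering property of $(Q_{i,\ell})_\ell$ to obtain a finite subfamily covering a neighborhood of $0$ in $M_i$, and likewise for $(P_\ell)_\ell$ and $M'$; finally intersect with the neighborhood of $0$ in $M$ supplied by Lemma \ref{trivial-constant-rank} and assemble. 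The delicate point throughout is that shrinking a manifold must not destroy coverage, which is precisely why the constant-rank manifolds $M_i$ coming out of Lemma \ref{trivial-constant-rank} cannot simply be retained but must first be re-parametrized by Corollary \ref{conj-local-parametrization}, since an arbitrary neighborhood of $0$ could remove points of $M_i$ near $0$ lying in no other piece. One should also record, as part of the inductive statement, that the projections $\Pi_N$ land inside $M$, which is what makes the composition of diffeomorphisms legitimate at the recursion step.
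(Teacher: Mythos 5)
Your argument is correct in outline but takes a genuinely different, and considerably heavier, route than the paper's. The paper's proof is a direct two-step affair: apply Lemma \ref{trivial-constant-rank} to obtain finitely many \(H\)-manifolds \(M_1, \dots, M_p\), each relatively open in \(M\), on which \(\Pi_n\) has constant rank \(l\), together with an \(H\)-basic remainder \(M'\) on which the rank is everywhere \(< l\); then keep the \(M_i\) in the family as they are (with \(h_i = 0\), so \(\Pi_{n+k}\) is the identity) and apply Corollary \ref{conj-local-parametrization} \emph{only} to \(M'\), whose charts inherit rank \(< l\) everywhere by exactly your fact (i). No induction on dimension is needed. The premise that forces your extra work --- that the constant-rank pieces ``cannot simply be retained'' because an arbitrary neighborhood of \(0\) could remove points of \(M_i\) near \(0\) lying in no other piece --- is mistaken: the \(U_i\) are neighborhoods of \(0\), so \(M_i \cap U_i\) keeps every point of \(M_i\) sufficiently close to \(0\), and since there are only finitely many \(M_i\), each open in \(M\), one has \(\bigcup_i (M_i \cap U_i) \supset \left(\bigcup_i M_i\right) \cap \bigcap_i U_i\), which together with a finite robust cover of \(M'\) is a neighborhood of \(0\) in \(M\). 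The second bullet of the lemma is only delicate for \emph{infinite} families, which is why it must be carried through for the charts of \(M'\) but costs nothing for the finitely many \(M_i\). Re-parametrizing the \(M_i\) produces lower-dimensional charts on which the rank dichotomy can fail, and that is the sole reason you need an induction on dimension; the induction is well-founded, the top-dimensional charts are open in \(M_i\) so constant rank is inherited, and your chaining of the covering properties does go through, so the proof is not wrong --- it is an unnecessary detour that the paper's one-pass argument avoids.
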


\begin{proof}
  By Lemma \ref{trivial-constant-rank}, there are \(M_1, \dots,
  M_p, M' \subset 
  \real^{n+k}\) such that the sets
  \(M_1, \dots, M_p\) are \(H\)-manifolds, \(M'\) is \(H\)-basic,
  \(M_1 \cup \dots \cup M_p \cup M'\) is a neighborhood of \(0\)
  in \(M\), \(\Pi_n \restriction M_j\) has constant rank \(l\) for \(1
  \leq j \leq p\) and \(\rank x {\Pi_n \restriction M'} < l\) for
  every \(x \in M'\). Thus, it suffices to prove the result for
  \(M'\). Now, consider a family \((M'_j)_{j \in J}\) of \(H\)-manifolds
  obtained by applying Corollary
  \ref{conj-local-parametrization} to the set \(M'\). For each \(j \in
  J\) and \(x \in M'_j\), we must have \(\rank x {\Pi_n \restriction
    M'_j} \leq \rank y {\Pi_n \restriction M'} < l\) where \(y =
  \Pi_{n+k}(x)\), whence the result. 
\end{proof}

The lemma above allows us to prove Proposition
\ref{local-parametrization-first-improvement}. The idea is that we can
continue applying the lemma to all of the pieces on which \(\Pi_n\)
does not have constant rank until the rank of \(\Pi_n\) on these pieces
goes to \(0\). The following lemma makes this idea precise. Notice
also that Proposition \ref{local-parametrization-first-improvement} is
a restatement of the case \(l=0\) in the lemma below. 

\begin{lem}
  \label{local-parametrization-constant-rank-lemma}
  Let \(A \subset \real^{n+k}\) be an \(H\)-basic set.
  Given \(0 \leq l \leq n\), there is a family
  \((M_j)_{j \in J}\) of \(H\)-manifolds such that
  \begin{itemize}
  \item For every \(j \in J\), there is \(h_j \geq 0\) such that \(M_j
    \subset \real^{n+k+h_j}\). Furthermore, the projection \(\Pi_{n+k}
    \restriction M_j\) is a diffeomorphism onto its image.
  \item If \(U_j \subset \real^{n+k+h_j}\) is a neighborhood of \(0\)
    for every \(j \in J\), then there is a finite subset \(J_0 \subset
    J\) such that \(\bigcup_{j \in J_0} \Pi_{n+k}(M_j \cap U_j)\) is a
    neighborhood of \(0\) in \(A\).
  \item For every \(j \in J\), either the projection \(\Pi_n \restriction
    M_j\) has constant rank or \(\rank x {\Pi_n \restriction M_j} \leq
    l\) for all \(x \in M_j\). 
  \end{itemize}
\end{lem}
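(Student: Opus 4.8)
The plan is to prove the lemma by downward induction on $l$, starting from $l = n$ and decreasing to the desired value. The base case $l = n$ is exactly Corollary \ref{conj-local-parametrization}: applying it to $A$ produces a family $(M_j)_{j \in J}$ of $H$-manifolds with $\Pi_{n+k} \restriction M_j$ a diffeomorphism onto its image and the covering property; since every such $M_j$ sits inside $\real^{n+k+h_j}$, the rank of $\Pi_n \restriction M_j$ is trivially at most $n$, so the third bullet holds vacuously in the strong form ``$\rank x {\Pi_n \restriction M_j} \leq n$ for all $x$''. This means the statement is true for $l = n$, and we need to push the bound down one unit at a time.

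For the inductive step, suppose the lemma holds for some $l+1 \leq n$, and fix a family $(M_j)_{j \in J}$ witnessing it. For each $j$, we are in one of two situations: either $\Pi_n \restriction M_j$ already has constant rank, in which case we keep $M_j$ untouched and it already satisfies the conclusion for $l$; or $\rank x {\Pi_n \restriction M_j} \leq l+1$ for every $x \in M_j$ without being constant. In the latter case I would apply Lemma \ref{rank-cutting-parametrization} to $M_j$ (with the role of $n+k$ there played by $n+k+h_j$, and with $l+1$ in place of $l$). This replaces $M_j$ by a subfamily $(M_{j,i})_{i \in I_j}$ of $H$-manifolds, each living in some $\real^{n+k+h_j+h_{j,i}'}$, with $\Pi_{n+k+h_j} \restriction M_{j,i}$ a diffeomorphism onto its image, with the covering property relative to $M_j$, and such that each $M_{j,i}$ either has $\Pi_n$ of constant rank $l+1$ or has $\rank x {\Pi_n \restriction M_{j,i}} \leq l$ everywhere. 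Composing the two diffeomorphisms $\Pi_{n+k} \restriction M_j$ and $\Pi_{n+k+h_j} \restriction M_{j,i}$ shows that $\Pi_{n+k} \restriction M_{j,i}$ is still a diffeomorphism onto its image; the composition of two neighborhood-of-$0$ coverings is again one (the finiteness of the subset $J_0$ is preserved since we only ever take finite unions of finite subfamilies, using that the index set of $j$'s contributing is finite first, then refining each). Taking the union over $j$ of all the $M_{j,i}$ (and the untouched $M_j$'s), we obtain a family witnessing the lemma for $l+1$ again but now with the dichotomy ``constant rank $l+1$'' versus ``$\rank \leq l$''; in particular every manifold in the new family either has $\Pi_n$ of constant rank or has $\rank x {\Pi_n} \leq l$, which is exactly the conclusion for the value $l$. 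Iterating this finitely many times (from $n$ down to the prescribed $l$) finishes the proof.

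The one point requiring care, and the main technical obstacle, is the bookkeeping around the covering condition under composition: when we refine $M_j$ into $(M_{j,i})_{i \in I_j}$, the neighborhoods $U_{j,i}$ of $0$ we are handed for the refined manifolds must be turned into a single neighborhood $U_j$ of $0$ for $M_j$ before invoking the covering property of the original family, and this has to be done so that the resulting finite subfamily genuinely covers a neighborhood of $0$ in $A$. The correct order of operations is: given neighborhoods $U_{j,i}$, first use the covering property of each $(M_{j,i})_{i}$ inside $M_j$ to extract, for each $j$, a finite $I_j^0 \subset I_j$ and a neighborhood $V_j$ of $0$ in $M_j$ covered by $\bigcup_{i \in I_j^0} \Pi_{n+k+h_j}(M_{j,i} \cap U_{j,i})$; then pull $V_j$ back through the diffeomorphism $\Pi_{n+k} \restriction M_j$ (or rather note $\Pi_{n+k}(M_j \cap \widetilde{V}_j) \subset V_j$ for a suitable neighborhood $\widetilde V_j$ of $0$ in $\real^{n+k+h_j}$) and feed these $\widetilde V_j$ into the covering property of the original family $(M_j)_j$ to get a finite $J^0$. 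The composite finite subfamily $\{(j,i) : j \in J^0, i \in I_j^0\}$ then covers a neighborhood of $0$ in $A$, as desired. Since the manifolds with constant rank are simply never refined, the construction terminates after at most $n - l$ rounds, and each round only ever enlarges the ambient dimension $h_j$ by a bounded amount, so all the required data stays well-defined.
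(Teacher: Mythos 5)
Your proposal is correct and follows essentially the same route as the paper, which proves the lemma by a decreasing induction on \(l\) starting from the trivial case \(l = n\) (Corollary \ref{conj-local-parametrization}) and using Lemma \ref{rank-cutting-parametrization} for the inductive step. The only substantive content you add beyond the paper's one-line proof is the careful bookkeeping of the covering condition under composition of the two parametrizations, and your order of operations there (refine first, extract a neighborhood of \(0\) in each \(M_j\), then feed those into the covering property of the outer family) is the right one.
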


\begin{proof}
  The result follows from Lemma
  \ref{rank-cutting-parametrization} by a decreasing induction on
  \(l\). 
\end{proof}

\subsection{Local charts}

Consider an \(H\)-manifold \(M \subset \real^{n+k}\) of dimension
\(d\) such that \(\Pi_n \restriction M\) has constant rank \(l \leq
d\). Assume also that there is a strictly increasing sequence \(\iota
\colon \{1, \dots, d\} \to \{1, \dots, n+k\}\) such that \(\Pi_\iota
\restriction M\) is an immersion. Then, \(\Pi_\iota\) is also a local
diffeomorphism so that we have a convenient way of identifying \(M\)
with \(\real^d\) locally around every point. Furthermore, assume that
\(\iota(l) \leq n\) and define \(\iota' \colon \{1, \dots, l\} \to
\{1, \dots, n\}\) by \(\iota'(i) = \iota(i)\) for every \(1 \leq i
\leq l\). Then, the following diagram commutes
\begin{center}
  \begin{tikzcd}[column sep = large, row sep = large]
    M \arrow{r}{\Pi_\iota} \arrow[']{d}{\Pi_n} &\real^d \arrow{d}{\Pi_l}
    \\
    \real^n \arrow[']{r}{\Pi_{\iota'}} &\real^l,
  \end{tikzcd}
\end{center}
showing that the pair \((\Pi_\iota, \Pi_{\iota'})\) provides us with a
local identification of \(\Pi_n \colon M \to \real^n\) with \(\Pi_l
\colon \real^d \to \real^l\). 
We may refine Corollary \ref{local-parametrization-first-improvement}
as follows to assume that such sequences exist for each \(M_j\). 

\begin{prp}
  \label{local-parametrization-constant-rank}
  Let \(A \subset \real^{n+k}\) be an \(H\)-basic set.
  There is a family \((M_j)_{j \in J}\) of \(H\)-manifolds such that
  \begin{itemize}
  \item For every \(j \in J\), there is \(h_j \geq 0\) such that \(M_j
    \subset \real^{n+k+h_j}\). Furthermore the projection \(\Pi_{n+k}
    \restriction M_j\) is a diffeomorphism onto its image.
  \item If \(U_j \subset \real^{n+k+h_j}\) is a neighborhood of \(0\)
    for every \(j \in J\), then there is a finite subset \(J_0 \subset
    J\) such that \(\bigcup_{j \in J_0} \Pi_{n+k}(M_j \cap U_j)\) is a
    neighborhood of \(0\) in \(A\).
  \item For every \(j \in J\), if \(d = \dim M_j\) then \(\Pi_n
    \restriction M_j\) has constant rank \(l\) and there is
    some sequence \(\iota \colon \{1, \dots, d\}
    \to \{1, \dots, n+k+h_j\}\) such that \(\iota\) is strictly
    increasing, \(\Pi_\iota \restriction
    M_j\) is an immersion and \(\iota(l) \leq n\).
  \end{itemize}
\end{prp}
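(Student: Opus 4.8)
The plan is to start from the family $(M_j)_{j\in J}$ supplied by Proposition~\ref{local-parametrization-first-improvement} and to subdivide each $M_j$ a little further, using a linear-algebraic observation about projections of constant rank. It is enough to prove the following local statement: if $M\subseteq\real^N$ is an $H$-manifold of dimension $d$ such that $\Pi_{n+k}\restriction M$ is a diffeomorphism onto its image and $\Pi_n\restriction M$ has constant rank $l$, then $M$ is a finite union of $H$-manifolds $M_\iota$, each open in $M$, each coming with a strictly increasing map $\iota\colon\{1,\dots,d\}\to\{1,\dots,N\}$ such that $\iota(l)\le n$ and $\Pi_\iota\restriction M_\iota$ is an immersion. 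Granting this, apply it to every $M_j$ (with $N=n+k+h_j$ and $l$ the constant rank of $\Pi_n\restriction M_j$); each resulting piece $M_{j,\iota}$ is open in $M_j$, hence inherits that $\Pi_{n+k}\restriction M_{j,\iota}$ is a diffeomorphism onto its image, that $\Pi_n\restriction M_{j,\iota}$ has the same constant rank $l$, and that $\dim M_{j,\iota}=d$ (discarding empty pieces). The family $(M_{j,\iota})$ is then as desired. For the covering property, given neighborhoods $U_{j,\iota}$ of $0$, set $U_j=\bigcap_\iota U_{j,\iota}$, a finite intersection and hence still a neighborhood of $0$; Proposition~\ref{local-parametrization-first-improvement} yields a finite $J_0$ with $\bigcup_{j\in J_0}\Pi_{n+k}(M_j\cap U_j)$ a neighborhood of $0$ in $A$, and since $M_j\cap U_j=\bigcup_\iota(M_{j,\iota}\cap U_j)$ with $U_j\subseteq U_{j,\iota}$, the finite family indexed by $\{(j,\iota)\for j\in J_0\}$ does the job.

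To perform the subdivision, apply Proposition~\ref{basis} to obtain, after shrinking the polydisk on which $M$ is defined if necessary (which is harmless), $C^1$ functions $b_1,\dots,b_d\colon\polydisk N r\to\real^N$ whose components lie in $\germalg N$ and such that $b_1(x),\dots,b_d(x)$ is a basis of $\tangent x M$ for every $x\in M$. For a strictly increasing $\iota\colon\{1,\dots,d\}\to\{1,\dots,N\}$, the map
\[
  \delta_\iota\colon x\longmapsto\det\big(\Pi_\iota(b_1(x)),\dots,\Pi_\iota(b_d(x))\big)
\]
has its germ in $\germalg N$, so $M\cap\{x\for\delta_\iota(x)>0\}$ and $M\cap\{x\for\delta_\iota(x)<0\}$ are $H$-manifolds of dimension $d$ — adding a single strict inequality to the data defining $M$ leaves the gradient condition untouched — exactly as in the proof of Lemma~\ref{trivial-constant-rank}; let $M_\iota$ be their union. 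For $x\in M_\iota$, the differential of $\Pi_\iota\restriction M_\iota$ at $x$ is $\Pi_\iota$ restricted to $\tangent x M=\mathrm{span}(b_1(x),\dots,b_d(x))$, which is injective precisely because $\delta_\iota(x)\neq0$; hence $\Pi_\iota\restriction M_\iota$ is an immersion.

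It remains to check that $M=\bigcup_\iota M_\iota$, the union ranging over strictly increasing $\iota$ with $\iota(l)\le n$. Fix $x\in M$ and let $W=\tangent x M$, a $d$-dimensional subspace of $\real^N$ with $\dim\Pi_n(W)=\rank x{\Pi_n\restriction M}=l$. I then invoke the elementary fact: if $W\subseteq\real^N$ has dimension $d$ and $\dim\Pi_n(W)=l$, there is a strictly increasing $\iota\colon\{1,\dots,d\}\to\{1,\dots,N\}$ with $\iota(l)\le n$ and $\Pi_\iota\restriction W$ an isomorphism. Indeed, present $W$ as the row space of a $d\times N$ matrix $B$ of rank $d$; its first $n$ columns span a space of dimension $\dim\Pi_n(W)=l$, so $l$ of them, say those with indices $c_1<\dots<c_l\le n$, are linearly independent; since a linearly independent set of columns of $B$ extends to a basis of its column space, these extend to a column basis $S$ of $B$ with $|S|=d$; enumerating $S$ in increasing order as $\iota$ gives an isomorphism $\Pi_\iota\restriction W$, and as $\{c_1,\dots,c_l\}\subseteq S$ consists of indices $\le n$ while every index in $S$ that is $\le n$ precedes every index in $S$ that is $>n$, we get $\iota(l)\le n$. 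Applying this to $W=\tangent x M$ shows that $\Pi_\iota(b_1(x)),\dots,\Pi_\iota(b_d(x))$ are linearly independent, i.e.\ $\delta_\iota(x)\neq0$, i.e.\ $x\in M_\iota$.

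The main obstacle is precisely the elementary fact isolated in the last paragraph — that constant rank $l$ of $\Pi_n\restriction M$ always lets one choose the first $l$ immersion coordinates among the first $n$. Beyond that, constancy of the rank is exactly what makes the determinantal subdivision cover all of $M$ with no shrinking needed, and the structural bookkeeping ($H$-manifold, diffeomorphism via $\Pi_{n+k}$, reduction to finite subfamilies) transfers routinely from Proposition~\ref{local-parametrization-first-improvement}, as in the proofs of Lemmas~\ref{trivial-constant-rank} and~\ref{rank-cutting-parametrization}.
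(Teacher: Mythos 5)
Your proof is correct and follows the same route as the paper: reduce to the family given by Proposition~\ref{local-parametrization-first-improvement} and refine each \(M_j\) so that some coordinate projection \(\Pi_\iota\) with \(\iota(l)\leq n\) becomes an immersion, which is exactly Lemma~\ref{projection-immersion}. The only difference is that the paper cites that lemma from the discussion preceding Lemma~4.5 of \cite{rolin-speissegger-wilkie-denjoy-carleman-classes}, whereas you supply its proof (the determinantal subdivision via Proposition~\ref{basis} together with the column-selection argument showing the first \(l\) immersion coordinates can be taken among the first \(n\)); the small bookkeeping points you would still tidy up are that each \(M_\iota\) is a union of \emph{two} \(H\)-manifolds, \(\{\delta_\iota>0\}\) and \(\{\delta_\iota<0\}\), which should be indexed separately.
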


The proposition follows at once by applying the lemma below to each
of the \(H\)-manifolds \(M_j\) in the statement of Corollary
\ref{local-parametrization-first-improvement}. The lemma itself is
simply a restatement of the discussion in
\cite{rolin-speissegger-wilkie-denjoy-carleman-classes} before Lemma
4.5. 

\begin{lem}
  \label{projection-immersion}
  Let \(M \subset \real^{n+k}\) be an \(H\)-manifold of dimension
  \(d\) such that \(\Pi_n \restriction M\) has constant rank \(l \leq
  d\). Then, there are  \(H\)-manifolds \(M_1, \dots, M_p\)
  that are also open submanifolds of \(M\) and such that
  \begin{itemize}
  \item \(M_1 \cup \dots \cup M_p\) is a neighborhood of \(0\) in
    \(M\).
  \item For each \(1 \leq i \leq p\), there exists some stricly increasing
    sequence \(\iota\colon \{1, \dots, d\} \to \{1, \dots, n+k\}\)
    such that \(\iota(l) \leq n\) and \(\Pi_\iota\restriction M_i\) is
    an immersion.
  \end{itemize}
\end{lem}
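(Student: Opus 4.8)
The plan is to carve the charts $M_i$ out of $M$ by a single extra open condition built from a tangent frame, so that the defining equations of $M$ are left untouched and each piece is again an $H$-manifold and an open submanifold of $M$. Concretely, by Proposition \ref{basis}, after shrinking the defining polydisk of $M$ so that we may assume $M \subset \polydisk{n+k}{r}$, fix functions $b_1, \dots, b_d \colon \polydisk{n+k}{r} \to \real^{n+k}$, all of whose components lie in $\germalg{n+k}$, such that $b_1(x), \dots, b_d(x)$ is a basis of $\tangent{x}{M}$ for every $x \in M$. For a strictly increasing $\iota \colon \{1, \dots, d\} \to \{1, \dots, n+k\}$, put $\delta_\iota(x) = \det(\Pi_\iota(b_1(x)), \dots, \Pi_\iota(b_d(x)))$, a determinant in the components of the $b_i$, hence a function in $\germalg{n+k}$. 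On $\{x \in M \for \delta_\iota(x) > 0\}$ (resp. $\{x \in M \for \delta_\iota(x) < 0\}$) the differential $d\Pi_\iota$ carries the basis $b_1(x), \dots, b_d(x)$ of $\tangent{x}{M}$ to a basis of $\real^d$, so $\Pi_\iota$ restricted there is an immersion; moreover this set equals $M \cap \{\delta_\iota > 0\}$, which, being cut out from $\polydisk{n+k}{r}$ by the same equations $f_1 = \dots = f_{n-d} = 0$ and inequations $g_1 > 0, \dots, g_q > 0$ as $M$ together with the extra inequation $\delta_\iota > 0$, is an $H$-manifold of dimension $d$ and an open submanifold of $M$. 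Letting $\mathcal{I}$ denote the (finite) set of strictly increasing $\iota \colon \{1,\dots,d\} \to \{1,\dots,n+k\}$ with $\iota(l) \leq n$, the list of $H$-manifolds $M_\iota^{+} = M \cap \{\delta_\iota > 0\}$ and $M_\iota^{-} = M \cap \{\delta_\iota < 0\}$ for $\iota \in \mathcal{I}$ is then the desired family, provided these sets cover $M$.

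For the covering, I would fix $x \in M$ and produce some $\iota \in \mathcal{I}$ with $\delta_\iota(x) \neq 0$ by linear algebra on $\tangent{x}{M}$, reproducing the discussion in \cite{rolin-speissegger-wilkie-denjoy-carleman-classes} before Lemma 4.5. Since $\Pi_n \restriction M$ has constant rank $l$, the image $W = d\Pi_n(\tangent{x}{M}) \subset \real^n$ has dimension $l$, so there is a strictly increasing $\iota' \colon \{1, \dots, l\} \to \{1, \dots, n\}$ for which $\Pi_{\iota'}$ maps $W$ isomorphically onto $\real^l$. As $d\Pi_{\iota'} \restriction \tangent{x}{M} = \Pi_{\iota'} \circ (d\Pi_n \restriction \tangent{x}{M})$ and $\Pi_{\iota'}$ is injective on $W$, the kernel $K$ of $d\Pi_{\iota'} \restriction \tangent{x}{M}$ equals the kernel of $d\Pi_n \restriction \tangent{x}{M}$, a subspace of dimension $d - l$ all of whose vectors vanish in the coordinates $1, \dots, n$; hence projection to the coordinates $n+1, \dots, n+k$ is injective on $K$, and we may pick a strictly increasing $\kappa \colon \{1, \dots, d-l\} \to \{n+1, \dots, n+k\}$ with $\Pi_\kappa$ mapping $K$ isomorphically onto $\real^{d-l}$. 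Let $\iota$ be the concatenation of $\iota'$ and $\kappa$; it is strictly increasing because the values of $\iota'$ lie in $\{1, \dots, n\}$ and those of $\kappa$ in $\{n+1, \dots, n+k\}$, and $\iota(l) = \iota'(l) \leq n$, so $\iota \in \mathcal{I}$. If $v \in \tangent{x}{M}$ satisfies $d\Pi_\iota(v) = 0$, then $d\Pi_{\iota'}(v) = 0$, so $v \in K$, and then $d\Pi_\kappa(v) = 0$ forces $v = 0$; thus $d\Pi_\iota \restriction \tangent{x}{M}$ is an isomorphism and $\delta_\iota(x) \neq 0$. This places $x$ in $M_\iota^{+}$ or $M_\iota^{-}$, so $\bigcup_{\iota \in \mathcal{I}} (M_\iota^{+} \cup M_\iota^{-}) = M$, which is in particular a neighborhood of $0$ in $M$.

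The bookkeeping is routine, and I expect the only delicate point to be the one used above: exploiting that $\Pi_n \restriction M$ has \emph{constant} rank $l$ at every point so that the first $l$ immersion-coordinates can always be taken among $\{1, \dots, n\}$ while the remaining $d - l$ are taken among $\{n+1, \dots, n+k\}$ — this is what makes the single sequence $\iota$ with $\iota(l) \leq n$ available. The other thing to keep in mind is that intersecting $M$ with $\{\delta_\iota > 0\}$ or $\{\delta_\iota < 0\}$ preserves the $H$-manifold structure precisely because it does not alter the equations $f_1 = \dots = f_{n-d} = 0$, hence not the linear independence of $\grad f_1, \dots, \grad f_{n-d}$; one only has to shrink $\polydisk{n+k}{r}$ once more at the start so that the $b_i$, and therefore $\delta_\iota$, have $C^1$ representatives there.
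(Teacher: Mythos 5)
Your proof is correct and is essentially the argument the paper intends: the paper omits the proof, referring to the discussion in \cite{rolin-speissegger-wilkie-denjoy-carleman-classes} before their Lemma 4.5, and your determinant-splitting of $M$ via a tangent frame from Proposition \ref{basis} is exactly the same device the paper itself uses in Lemma \ref{trivial-constant-rank}. The linear-algebra step producing, at each point, an $\iota$ with $\iota(l)\leq n$ from the constant-rank hypothesis (first $l$ coordinates from $\{1,\dots,n\}$ spanning the image of $d\Pi_n$, the remaining $d-l$ from $\{n+1,\dots,n+k\}$ separating its kernel) is also the standard one, and your handling of the $C^1$ shrinking is the right precaution.
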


\section{Cutting fibers}
\label{cutting-fibers}

\subsection{Dimension}
\label{par:dimension}

Throughout this document, we use the word manifold to mean
\(C^1\)-submanifold of \(\real^n\) for some integer \(n \geq 1\). 
Following \cite[p. 4379]{vdDries-Speissegger-generalized-power-series}, we
say that a set \(A
\subset \real^n\) \textbf{has dimension} when it
is a countable union of manifolds. In
this case, its dimension is
\[\dim A = \max\{\dim M \for M \subset A \text{ is a manifold}\}.\]
If \(A\) is a manifold then the definition above agrees with the usual
notion of dimension. If \(A_i \subset \real^n\) has dimension for each
\(i \geq 0\) then \(A = \bigcup A_i\) also has dimension and \(\dim A
= \max\{\dim A_i \for i \geq 0\}\). Also, assume that \(M \subset
\real^n\) is a manifold and that \(f \colon M \to \real^k\) is a
\(C^1\)-map with constant rank \(r\). Then, since \(M\) has a
countable basis, the rank Theorem gives us that \(f(M)\) has dimension
and \(\dim(f(M)) = r\).

Later on, we will prove Theorem \ref{sub-sets-are-simple} by
repeatedly decreasing the dimension of the fibers. In order to do
so, we need tools to compare the dimensions
of various sets. Such results are easy consequences of the following
global parametrization for \(\Lambda\)-sets. 

\begin{lem}
  \label{global-parametrization}
  Let \(A \subset \real^{n+k}\) be a \(\Lambda\)-set. There are
  simple sub-\(\Lambda\)-sets \(M_1, \dots, M_p\) that are also
  \(H\)-manifolds and such that 
  \begin{itemize}
  \item For \(1 \leq i \leq p\), there is some \(h_i \geq 0\) such
    that \(M_i \subset \real^{n+k+h_i}\). Furthermore, the projection
    \(\Pi_{n+k} \restriction M_i\) is a diffeomorphism onto its
    image.    
  \item \(A = \Pi_{n+k}(M_1) \cup \dots \cup \Pi_{n+k}(M_p)\).
  \item For every \(i \in I\), if \(d_i = \dim M_i\) then \(\Pi_n
    \restriction M_i\) has constant rank \(l\) and there is
    some sequence \(\iota \colon \{1, \dots, d_i\}
    \to \{1, \dots, n+k+h_i\}\) such that \(\iota\) is strictly
    increasing, \(\Pi_\iota \restriction
    M_i\) is an immersion and \(\iota(l) \leq n\).
  \end{itemize}
\end{lem}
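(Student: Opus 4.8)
The plan is to deduce the global parametrization from the local one in Proposition~\ref{local-parametrization-constant-rank} by a compactness argument over \(\clos A\). The one point requiring care is that each chart produced locally must first be shrunk to a genuine simple sub-\(\Lambda\)-set, and only afterwards may one pass to a finite subfamily.

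Since \(A\) is a \(\Lambda\)-set it is bounded and \(H\)-semianalytic, so \(\clos A\) is compact and for each \(b\in\clos A\) there is a polydisk \(\polydisk{n+k}{r_b}\) with \((A-b)\cap\polydisk{n+k}{r_b}\) an \(H\)-set, say a finite union \(B^b_1\cup\dots\cup B^b_{m_b}\) of \(H\)-basic subsets of \(\real^{n+k}\). Applying Proposition~\ref{local-parametrization-constant-rank} to each \(B^b_l\) produces a family \((N^{b,l}_j)_{j\in J^{b,l}}\) of \(H\)-manifolds for which \(\Pi_{n+k}\restriction N^{b,l}_j\) is a diffeomorphism onto its image, the constant-rank and immersion conditions for \(\Pi_n\) hold, and the stated covering property is satisfied. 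Each \(N^{b,l}_j\) is in particular an \(H\)-set, so Theorem~\ref{basic-simple-sub-lambda-set} supplies a polydisk \(U^{b,l}_j\) for which \(N^{b,l}_j\cap U^{b,l}_j\) is a simple sub-\(\Lambda\)-set; this intersection is still an \(H\)-manifold, with the same defining functions restricted to the smaller polydisk, and it still satisfies the rank and immersion conditions, these being open conditions inherited by open submanifolds. Feeding the \(U^{b,l}_j\) into the covering property yields, for each \((b,l)\), a finite set \(J_0^{b,l}\subset J^{b,l}\) such that \(\bigcup_{j\in J_0^{b,l}}\Pi_{n+k}(N^{b,l}_j\cap U^{b,l}_j)\) is a neighbourhood of \(0\) in \(B^b_l\); since \(\polydisk{n+k}{r_b}\) is itself a neighbourhood of \(0\), the union over \(l\) is then a neighbourhood of \(0\) in \(A-b\).

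To return to \(A\), put \(M^{b,l}_j=(N^{b,l}_j\cap U^{b,l}_j)+(b,0,\dots,0)\), shifting only the first \(n+k\) coordinates. A translation is an affine diffeomorphism intertwining \(\Pi_{n+k}\) and \(\Pi_n\) with translations of their targets, so each \(M^{b,l}_j\) still has \(\Pi_{n+k}\) a diffeomorphism onto its image and still satisfies the constant-rank and immersion conditions; it is again a simple sub-\(\Lambda\)-set because \(\Lambda\)-sets, existential definability, and finiteness of fibres are all stable under translation, and — using that \(H\) is piecewise polynomial off any neighbourhood of \(0\), so that translating the defining data keeps it inside the algebras \(\germalg n\) — it is again an \(H\)-manifold. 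By construction \(\bigcup_l\bigcup_{j\in J_0^{b,l}}\Pi_{n+k}(M^{b,l}_j)\) is a neighbourhood of \(b\) in \(A\) contained in \(A\), say it contains \(W_b\cap A\) for an open \(W_b\ni b\). Compactness of \(\clos A\) gives finitely many \(W_{b_1},\dots,W_{b_P}\) covering it, whence
\[A=\bigcup_{s=1}^P\bigl(A\cap W_{b_s}\bigr)=\bigcup_{s=1}^P\bigcup_l\bigcup_{j\in J_0^{b_s,l}}\Pi_{n+k}\bigl(M^{b_s,l}_j\bigr),\]
a finite union of simple sub-\(\Lambda\)-sets that are \(H\)-manifolds with all the required properties.

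I expect the main obstacle to be the ordering highlighted in the first paragraph: one cannot extract a finite family of charts and then shrink them, so the full force of the covering property of Proposition~\ref{local-parametrization-constant-rank} (together with Theorem~\ref{basic-simple-sub-lambda-set}) is needed precisely in order to shrink first and still remain finite. A secondary delicate point is the base-point bookkeeping — checking that gluing the local pieces by translations preserves the \(H\)-manifold structure along with the diffeomorphism and constant-rank/immersion data — which comes down to translations being affine diffeomorphisms compatible with the coordinate projections and to the piecewise-polynomial behaviour of \(H\) away from the origin.
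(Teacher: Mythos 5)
Your argument is correct and is exactly the paper's intended proof: the paper disposes of this lemma in one line (``follows easily from Proposition~\ref{local-parametrization-constant-rank} and the compactness of \(\clos A\)''), and you have filled in the details in the right order, in particular shrinking each local chart to a simple sub-\(\Lambda\)-set via Theorem~\ref{basic-simple-sub-lambda-set} \emph{before} invoking the covering property to extract a finite subfamily. The only soft spot is the assertion that the translated pieces remain \(H\)-manifolds in the strict sense of the definition (defining functions with germ at \(0\) in \(\germalg{n+k+h_j}\) and gradients independent on a polydisk centred at the origin) --- the piecewise-polynomiality of \(H\) does not obviously transfer to the composed defining functions --- but the paper shares this imprecision, and every property of the \(M_i\) actually used downstream (manifold, simple sub-\(\Lambda\)-set, constant rank, immersion) is translation-invariant.
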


\begin{proof}
  The result follows easily from Proposition
  \ref{local-parametrization-constant-rank} and the compactness of
  \(\clos A\). 
\end{proof}

In particular, using the notation of the lemma above, we have
\(\Pi_n(A) = \Pi_n(M_1) \cup \dots \cup \Pi_n(M_p)\). Given \(1 \leq i
\leq p\), the projection \(\Pi_n\restriction M_i\) has constant rank
so that \(\Pi_n(M_i)\) has dimension. Thus, it follows that
\(\Pi_n(A)\) also has dimension. All in all, this shows that all
sub-\(\Lambda\)-sets have dimension.

Recall that, by Corollary \ref{cor:lambda-finite-components},
\(\Lambda\)-sets have finitely many connected components. The result
holds also for sub-\(\Lambda\)-sets since they are continuous images
of \(\Lambda\)-sets. Furthermore, a set \(A \subset \real^n\) has
dimension \(0\) if and only if it is discrete which yields the
following lemma. 

\begin{lem}
  \label{lem:dimension-0-finite}
  Let \(A \subset \real^n\) be a sub-\(\Lambda\)-set. Then, \(A\) has
  dimension \(0\) if and only if it is finite. 
\end{lem}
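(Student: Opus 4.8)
The plan is to use the parametrization machinery already in place, together with the finiteness-of-connected-components results, to reduce the statement to an elementary fact about discrete sets. The backward direction is trivial: a finite set is a union of finitely many $0$-dimensional manifolds (points), hence has dimension $0$. So the content is the forward direction. Suppose $A \subset \real^n$ is a sub-$\Lambda$-set with $\dim A = 0$. First I would recall, as noted just before the statement, that $A$ has finitely many connected components, being a continuous image of a $\Lambda$-set (Corollary \ref{cor:lambda-finite-components} plus the fact that projections preserve finiteness of connected components). So it suffices to treat a single connected component, i.e. to show that a connected sub-$\Lambda$-set of dimension $0$ is a single point.

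The key step is to observe that a set of dimension $0$ is discrete. Indeed, if $A$ had a non-isolated point $a$, one could extract a nonconstant convergent sequence in $A$; but more to the point, the paper has already told us (in the paragraph introducing dimension) that $A$ has dimension $0$ precisely when $A$ is discrete — this is stated as a fact there (``a set $A \subset \real^n$ has dimension $0$ if and only if it is discrete''), so I may simply invoke it. Given that $A$ is discrete, a connected component of $A$ is a connected discrete set, which must be a singleton. Hence each of the finitely many connected components of $A$ is a point, and $A$ is finite.

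The one subtlety to be careful about — and the step I expect to require the most attention — is justifying that $A$, being a sub-$\Lambda$-set of dimension $0$, really is discrete in the sense needed, i.e. that its being a countable union of $0$-dimensional manifolds forces discreteness. A $0$-dimensional $C^1$-submanifold of $\real^n$ is by definition discrete, but a countable union of discrete sets need not be discrete (e.g. $\{1/k : k \geq 1\} \cup \{0\}$). This is where the sub-$\Lambda$-set hypothesis is essential: the finiteness of connected components (which genuinely uses the $\Lambda$-set structure, not just the dimension bound) rules out such accumulation. Concretely, if $A$ were not discrete, some connected component of $\clos A$ would be nontrivial, but a nontrivial connected set contains a nondegenerate arc and so has dimension $\geq 1$; since $A$ has only finitely many components and each is relatively closed, a component of $A$ with more than one point would have to be infinite and contained in a bounded region, forcing $\dim A \geq 1$, a contradiction. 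So the argument really is: finitely many connected components $+$ dimension $0$ $\Rightarrow$ each component is a point $\Rightarrow$ finitely many points. I would present it in that order, leaning on the already-established facts and keeping the topological remark about connected discrete sets to one line.
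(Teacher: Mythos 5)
Your proposal is correct and follows essentially the same route as the paper, which justifies the lemma in the paragraph preceding its statement by combining the finiteness of connected components of sub-\(\Lambda\)-sets (via Corollary \ref{cor:lambda-finite-components}) with the identification of dimension-\(0\) sets as discrete ones. You are in fact slightly more careful than the paper, rightly noting that ``countable union of \(0\)-dimensional manifolds'' does not by itself imply discreteness and that the finiteness of connected components is what closes that gap.
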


Suppose that \(M \subset \real^{n+k}\) is a manifold of dimension
\(d\) such that \(\Pi_n \restriction M\) has constant rank
\(l\). Then, we know that \(\dim M_y = d - l\) for every \(y \in
\Pi_n(M)\) and that \(\dim \Pi_n(M) = l\). In particular, \(\dim M =
\dim \Pi_n(M) + l\). The parametrization of Lemma
\ref{global-parametrization} allows us to replace \(\Lambda\)-sets
with manifolds so that we may generalize the results above to
\(\Lambda\)-sets to obtain the lemma below, the proof of which we
omit. 

\begin{lem}
  Consider \(A \subset \real^{n+k}\) a \(\Lambda\)-set such that there is an
  integer \(\mu \geq 0\) with \(\dim A_y = \mu\) for all \(y \in
  \Pi_n(A)\). Then \(\dim A = \dim (\Pi_n(A)) + \mu\). 
\end{lem}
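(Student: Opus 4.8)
The plan is to reduce to the case of a manifold via the global parametrization of Lemma \ref{global-parametrization} and then carry out the dimension bookkeeping, exactly as in the paragraph preceding the statement. We may assume $A \neq \varnothing$, as otherwise $\Pi_n(A) = \varnothing$ and there is nothing to prove. Apply Lemma \ref{global-parametrization} to $A \subset \real^{n+k}$ to get \(H\)-manifolds $M_1, \dots, M_p$, with $M_i \subset \real^{n+k+h_i}$, such that $\Pi_{n+k} \restriction M_i$ is a diffeomorphism onto its image, $A = \Pi_{n+k}(M_1) \cup \dots \cup \Pi_{n+k}(M_p)$, and $\Pi_n \restriction M_i$ has constant rank, say $l_i$. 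Put $A_i = \Pi_{n+k}(M_i) \subset \real^{n+k}$ and $d_i = \dim M_i$, and discard the empty $A_i$. Since $\Pi_{n+k}\restriction M_i$ is a diffeomorphism onto $A_i$, each $A_i$ is a $C^1$-manifold of dimension $d_i$; moreover $\Pi_n \restriction A_i$ still has constant rank $l_i$, because $\Pi_n \restriction M_i$ is the composition of the diffeomorphism $\Pi_{n+k} \restriction M_i \colon M_i \to A_i$ with $\Pi_n \colon \real^{n+k} \to \real^n$. By the facts recalled just before the statement (consequences of the rank theorem), we then have $\dim \Pi_n(A_i) = l_i$ and $\dim (A_i)_y = d_i - l_i$ for every $y \in \Pi_n(A_i)$.

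Next I would read off information about the fibers of $A$. As $A = A_1 \cup \dots \cup A_p$, we have $A_y = \bigcup_i (A_i)_y$ for every $y$, with $(A_i)_y = \varnothing$ whenever $y \notin \Pi_n(A_i)$; hence $\dim A_y = \max\{\, d_i - l_i : y \in \Pi_n(A_i)\,\}$ for every $y \in \Pi_n(A)$, and this equals $\mu$ by hypothesis. Two things follow. First, picking some $y \in \Pi_n(A_i) \subset \Pi_n(A)$ gives $d_i - l_i = \dim (A_i)_y \leq \dim A_y = \mu$, so $d_i - l_i \leq \mu$ for every $i$. Second, setting $S = \{\, i : d_i - l_i = \mu \,\}$, the fact that the above maximum is attained at some index shows that every $y \in \Pi_n(A)$ lies in $\Pi_n(A_i)$ for some $i \in S$; that is, $\Pi_n(A) = \bigcup_{i \in S} \Pi_n(A_i)$, and $S \neq \varnothing$.

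It remains to combine these. From $\Pi_n(A) = \bigcup_{i \in S} \Pi_n(A_i)$ we get $\dim \Pi_n(A) = \max_{i \in S} l_i$. For every $i$ we have $l_i = \dim \Pi_n(A_i) \leq \dim \Pi_n(A)$ and $d_i - l_i \leq \mu$, hence $d_i \leq \dim \Pi_n(A) + \mu$; since $\dim A = \max_i d_i$, this gives $\dim A \leq \dim \Pi_n(A) + \mu$. Conversely, choosing $i_0 \in S$ with $l_{i_0} = \max_{i \in S} l_i = \dim \Pi_n(A)$ yields $d_{i_0} = l_{i_0} + \mu = \dim \Pi_n(A) + \mu$, so $\dim A \geq \dim \Pi_n(A) + \mu$, proving the equality. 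There is no real obstacle once Lemma \ref{global-parametrization} is available; the only point requiring care is the bookkeeping with the index set $S$ — one must notice that $\Pi_n(A)$ is already covered by those $\Pi_n(M_i)$ whose fibers attain the maximal dimension $\mu$, so the small-fiber pieces may be ignored when computing both $\dim A$ and $\dim \Pi_n(A)$, even though the ranks $l_i$ need not be uniform across $i$.
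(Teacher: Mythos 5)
Your proof is correct and follows exactly the route the paper indicates for this (omitted) proof: reduce to manifolds via Lemma \ref{global-parametrization} and then apply the constant-rank dimension facts recalled just before the statement, with careful bookkeeping over the indices whose fibers attain the maximal dimension $\mu$. No gaps.
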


\begin{remark}
  \label{rem:simple-sub-sets-dimension}
  In particular, notice that, if \(A \subset \real^n\) is a
  sub-\(\Lambda\)-set and if \(A' \subset \real^{n+k}\) is a
  \(\Lambda\)-set such that \(\Pi_n(A') = A\) and
  \(\Pi_n \restriction A'\) has finite fibers, then
  \(\dim A' = \dim A\).
\end{remark}

Actually, we can use the remark above to generalize the lemma to
simple sub-\(\Lambda\)-sets. 

\begin{prp}
  \label{dimension-fibers}
  Consider \(A \subset \real^{n+k}\) a simple sub-\(\Lambda\)-set and
  assume that there is an integer \(\mu \geq 0\) such that \(\dim A_y =
  \mu\) for all \(y \in \Pi_n(A)\). Then \(\dim A = \dim(\Pi_n(A)) +
  \mu\). 
\end{prp}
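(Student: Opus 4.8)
The plan is to reduce the statement to the case of $\Lambda$-sets, which is the lemma preceding Remark \ref{rem:simple-sub-sets-dimension}. Since $A$ is a simple sub-$\Lambda$-set, I would fix an integer $k' \geq 0$ and a $\Lambda$-set $A' \subset \real^{n+k+k'}$ with $\Pi_{n+k}(A') = A$ such that $\Pi_{n+k} \restriction A'$ has finite fibers. Then $\Pi_n(A') = \Pi_n(A)$ and, by Remark \ref{rem:simple-sub-sets-dimension}, $\dim A' = \dim A$. Writing a point of $\real^{n+k+k'}$ as $(y, z, v)$ with $y \in \real^n$, $z \in \real^k$ and $v \in \real^{k'}$, it then suffices to show that $\dim A'_y = \mu$ for every $y \in \Pi_n(A')$: granting this, applying the lemma on $\Lambda$-sets to $A'$ yields $\dim A' = \dim \Pi_n(A') + \mu$, whence $\dim A = \dim A' = \dim \Pi_n(A') + \mu = \dim \Pi_n(A) + \mu$.

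To compute $\dim A'_y$ I would first note that the coordinate projection $(z, v) \mapsto z$ maps $A'_y$ onto $A_y$ — surjectivity being exactly the equality $\Pi_{n+k}(A') = A$ — and that its fiber over $z$ is $\{v \in \real^{k'} : (y, z, v) \in A'\}$, a fiber of $\Pi_{n+k} \restriction A'$, hence finite. Next, I would argue that $A'_y$ is again a $\Lambda$-set: it is certainly bounded, and the key point is that the algebras $\germalg{m}$ are stable under substituting $0$ for some of the variables (an instance of the composition clause of Definition \ref{defn:smallest-algebras}, using the zero germ for the first $n$ arguments and coordinate germs for the remaining ones), so that the fiber of an $H$-semianalytic set over a point $y \in \real^n$ is again $H$-semianalytic. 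Since $A_y = \Pi_k(A'_y)$ is a sub-$\Lambda$-set by Proposition \ref{simple-prp} and $\Pi_k \restriction A'_y$ has finite fibers, Remark \ref{rem:simple-sub-sets-dimension} gives $\dim A'_y = \dim A_y = \mu$, which is what was needed.

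The one step I expect to require real care is the claim that $A'_y$ is a $\Lambda$-set, i.e.\ that fibers of $\Lambda$-sets taken at coordinate values are again $\Lambda$-sets. This rests on the stability of the germ algebras $\germalg{m}$ under setting variables to zero, which I would deduce from Definition \ref{defn:smallest-algebras}, together with the routine observation that $H$-basic, and hence $H$-semianalytic, sets transform well under this operation. Everything else is a direct application of Proposition \ref{simple-prp}, Remark \ref{rem:simple-sub-sets-dimension} and the already-established $\Lambda$-set version of the statement, so I do not anticipate a genuine obstacle.
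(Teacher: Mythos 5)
Your argument is correct and is exactly the route the paper intends (the paper only sketches it by saying the remark lets one generalize the preceding lemma): pass to a witnessing $\Lambda$-set $A'$, use Remark \ref{rem:simple-sub-sets-dimension} twice to identify $\dim A'$ with $\dim A$ and $\dim A'_y$ with $\dim A_y = \mu$, and then apply the $\Lambda$-set version of the statement to $A'$. You also correctly identify and justify the one point needing care, namely that fibers $A'_y$ of a $\Lambda$-set over coordinate values are again $\Lambda$-sets, via stability of the algebras $\germalg{m}$ under substituting zero for variables.
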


The proposition gives us a convenient way to argue that some sets have
small fibers. Indeed, suppose that \(A \subset B\) are two simple
sub-\(\Lambda\)-sets satisfying the hypotheses of the proposition and
such that \(\Pi_n(A) = \Pi_n(B)\) and \(\dim A < \dim B\). Then, the
fibers of \(A\) must be smaller than those of \(B\). 

\subsection{A consequence of quasianalyticity}

Suppose that \(g \colon \polydisk n r \to \real\) is a function such
that \(g \in \germalg n\) and \(g\) vanishes on an open set \(U
\subset \polydisk n r\) with \(0 \in \clos U\). Since \(U\) is open,
all the partial derivatives of \(g\) of any order vanish on \(U\) so
that, by continuity, they all vanish at \(0\) as well. Thus, by
quasianalyticity, it follows that there is an open neighborhood \(V\)
of \(0\) such that \(g\) vanishes on \(V\). The aim of this paragraph
is to show that we have a similar result when we replace \(\polydisk n
r\) with an \(H\)-manifold \(M\). To be more precise, we want to prove
the following proposition. 

\begin{prp}
  \label{vanishing}
  Consider \(\polydisk n r\) a polydisk and \(M \subset \polydisk n
  r\) an \(H\)-manifold with non-empty 
  germ at \(0\) as well as a function \(g \in \germalg n\) 
   that is defined on \(\polydisk n r\). Assume
  that the interior of the set \(\{x \in M \for g(x) = 0\}\) in \(M\) has
  non-empty germ at \(0\). Then, there is some neighborhood \(U
  \subset \real^n\) of \(0\) such that \(g\) vanishes on \(M \cap
  U\). 
\end{prp}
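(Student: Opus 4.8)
The plan is to reduce to the flatness argument for quasianalytic functions recalled just before the statement, by parametrizing \(M\) near \(0\) with a map all of whose components lie in \(\germalg d\), where \(d = \dim M\).

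Write \(M = \{x \in \polydisk n r \for f_1(x) = \dots = f_{n-d}(x) = 0,\ g_1(x) > 0,\ \dots,\ g_q(x) > 0\}\) with \(\grad f_1, \dots, \grad f_{n-d}\) everywhere independent on \(\polydisk n r\), and let \(N = \{x \in \polydisk n r \for f_1(x) = \dots = f_{n-d}(x) = 0\}\); this is a \(C^1\)-submanifold of dimension \(d\), closed in \(\polydisk n r\), and \(M\) is open in \(N\) since the \(g_i\) are continuous. As \(M\) has non-empty germ at \(0\), there are points of \(M\), hence of the closed set \(N\), arbitrarily close to \(0 \in \polydisk n r\), so \(0 \in N\), i.e.\ \(f_i(0) = 0\) for all \(i\). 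Since \(\grad f_1(0), \dots, \grad f_{n-d}(0)\) are independent, after permuting coordinates the minor \(\det\left(\partial f_i/\partial x_j(0)\right)_{1 \leq i \leq n-d,\ d < j \leq n}\) is non-zero. Solving successively for \(x_n, x_{n-1}, \dots, x_{d+1}\) using the implicit-function clause of Definition \ref{defn:smallest-algebras} and substituting with the composition clause (the required Jacobian minors stay invertible exactly as in the classical implicit function theorem), I would obtain, after shrinking, a neighborhood \(V \subset \real^d\) of \(0\) and functions \(\varphi_1, \dots, \varphi_{n-d} \in \germalg d\) with \(\varphi_i(0) = 0\) such that \(\Phi(x') = (x', \varphi_1(x'), \dots, \varphi_{n-d}(x'))\) is a homeomorphism from \(V\) onto a relatively open neighborhood of \(0\) in \(N\).

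Now let \(W\) be the interior, taken in \(M\), of \(\{x \in M \for g(x) = 0\}\). Since \(M\) is open in \(N\), the set \(W\) is open in \(N\); by hypothesis \(0 \in \clos W\), and \(g\) vanishes on \(W\). After shrinking we may assume \(W \subset \Phi(V)\), so that \(\Omega \coloneq \Phi^{-1}(W)\) is an open subset of \(V\) with \(0 \in \clos \Omega\) (as \(\Phi\) is a homeomorphism and \(\Phi(0) = 0\)). By the composition clause of Definition \ref{defn:smallest-algebras}, \(G \coloneq g \circ \Phi\) lies in \(\germalg d\); being weakly \(C^\infty\), it has partial derivatives of every order, and, working with a \(C^p\) representative for each \(p\), all of these vanish on \(\Omega\), hence at \(0\) by continuity. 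Thus \(\taylor(G) = 0\), and since the germ algebras are quasianalytic (Theorem \ref{thm:properties-H}), \(G\) vanishes on some neighborhood \(V'\) of \(0\) in \(V\). Consequently \(g\) vanishes on the neighborhood \(\Phi(V')\) of \(0\) in \(N\), and since \(M \subset N\) there is an open \(U \subset \real^n\) containing \(0\) with \(M \cap U \subset N \cap U \subset \Phi(V')\), so \(g\) vanishes on \(M \cap U\), as required.

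The step I expect to be the main obstacle is the construction of the parametrization \(\Phi\) with components in \(\germalg d\): one must iterate the implicit-function and composition clauses of Definition \ref{defn:smallest-algebras} while tracking that the successive minors remain invertible and that the back-substitutions keep landing inside \(\germalg d\). Everything else is the routine transfer, through the chart \(\Phi\), of the fact that a weakly \(C^\infty\) germ in a quasianalytic class that vanishes on an open set accumulating at \(0\) vanishes on a whole neighborhood of \(0\).
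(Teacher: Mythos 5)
Your proposal is correct and follows essentially the same route as the paper: reduce to the case \(M = \{f_1 = \dots = f_{n-d} = 0\}\), use the implicit-function and composition clauses of Definition \ref{defn:smallest-algebras} to build a local parametrization \(\Phi\) with components in \(\germalg d\), and then apply quasianalyticity to \(g \circ \Phi\), which is defined on a polydisk. The only differences are cosmetic (you spell out the iteration of the implicit function theorem and the transfer of the open set through the chart, which the paper leaves implicit).
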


This result will be useful when trying to decrease dimension. Indeed,
consider \(M \subset \real^n\) an \(H\)-manifold of dimension \(d\) and
let \(g \in \germalg n\). The zero set of \(g \restriction M\) defines
a germ of subsets of \(M\) which we write \(Z_M(g)\). If \(\dim Z_M(g) =
d\) then the interior of \(Z_M(g)\) in \(M\) has non-empty germ at \(0\)
so that \(g\) vanishes in a neighborhood of \(0\) in \(M\). Thus,
either \(Z_M(g)\) and \(M\) have the same germ at \(0\) or \(\dim Z_M(g) <
d\).

\begin{proof}
  Up to shrinking \(\polydisk n r\), there are functions \(f_1, \dots,
  f_{n-d} \in \germalg n\) defined and \(C^1\) on \(\polydisk n r\)
  such that \(M\) is an open submanifold of
  \[M' = \{x \in \polydisk n r \for f_1(x) = \dots = f_{n-d}(x) =
    0\}\]
  and \(\grad f_1(x), \dots, \grad f_{n-d}(x)\) are linearly
  independent for every \(x \in \polydisk n r\). Without loss of
  generality, we may assume that \(M = M'\).

  The map \(f = (f_1, \dots, f_{n-d}) \colon \polydisk n r \to
  \real^{n-d}\) is a submersion. Thus, by stability of the algebras
  generated by \(H\) under implicit
  functions, there exist a polydisk \(\polydisk d s\), a neighborhood
  \(U \subset \real^n\) of \(0\) and a diffeomorphism \(\Phi \colon
  \polydisk d s \to M \cap U\) all of whose components are in
  \(\germalg d\). We may now apply the result to the function \(g
  \circ \Phi\) because it is defined on a polydisk, which allows us to
  conclude.
\end{proof}

\subsection{The Local Fiber Cutting Lemma}

At the moment, we know from Lemma \ref{global-parametrization} that we can
parametrize sub-\(\Lambda\)-sets \(A \subset \real^n\) as
\[A = \Pi_n(M_1) \cup \dots \cup \Pi_n(M_p)\]
where each \(M_i\) is an \(H\)-manifold such that \(\Pi_n \restriction
M_i\) has constant rank.
However, there is no bound on the dimension of the fibers.
In order to show that the hypotheses of Gabrielov's
Theorem of the Complement are satisfied, we need to have such a
parametrization where, for each \(1 \leq i \leq p\), there is a
strictly increasing sequence \(\iota \colon \{1, \dots, d_i\} \to \{1,
\dots, n\}\) such that \(\Pi_{\iota} \restriction M_i\) is an
immersion, where \(d_i = \dim(M_i)\). This implies in particular that
\(\Pi_n \restriction M_i\) is an immersion so that its fibers must
have dimension \(0\) whence they are finite by Lemma
\ref{lem:dimension-0-finite}.
We will show over the next two paragraphs that such parametrizations
exist. This paragraph is dedicated to proving the Local Fiber Cutting
Lemma, which is the main tool we use in the proof, while Paragraph
\ref{par:global-fiber-cutting} is concerned with showing how this
lemma can be used to conclude.

In order to prove the Local Fiber Cutting Lemma, we first need two
results. If \(M \subset \real^{n+k}\) is an \(H\)-manifold such that
\(\Pi_n \restriction M\) has constant rank, then each fiber \(M_x\),
with \(x \in \Pi_n(M)\), is a manifold. As in \ref{basis}, we construct a
basis for this manifold. The second result will allow us to derive a
contradiction in the proof of the Local Fiber Cutting Lemma. 

\begin{lem}
  \label{basis-fibers}
  Let \(M \subset \real^{n+k}\) be an \(H\)-manifold of dimension \(d\)
  and assume that \(\Pi_n \restriction M\) has constant rank \(l \leq
  d\). Then there are a polydisk \(\polydisk {n+k} r\) and functions 
  \(b_1, \dots, b_{d-l} \colon \polydisk {n+k} r \to \real^{n+k}\),
  all of whose components are in \(\germalg {n+k}\), and such that 
  for all \(z \in M \cap \polydisk {n+k} r\), \(b_1(z), \dots,
  b_{d-pl}(z)\) is a basis of \(\tangent z
  {\{x\} \times M_x}\) where \(x = \Pi_n(z)\). 
\end{lem}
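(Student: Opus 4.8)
The plan is to follow the proof of Proposition \ref{basis} almost verbatim, the only new observation being that the tangent space to a fibre of \(\Pi_n \restriction M\) is the intersection of \(\tangent{z}{M}\) with \(\{0\}^n \times \real^k\). Set \(N = n + k\). Fix defining data \(f_1, \dots, f_{N-d}, g_1, \dots, g_q \in \germalg{N}\) of \(M\) on a polydisk \(\polydisk{N}{r}\), with \(\grad f_1(z), \dots, \grad f_{N-d}(z)\) linearly independent for every \(z \in \polydisk{N}{r}\); as in Proposition \ref{basis} the inequations \(g_j\) play no role, \(M\) is an open submanifold of \(\{f_1 = \dots = f_{N-d} = 0\}\), and \(\tangent{z}{M} = \operatorname{span}(\grad f_1(z), \dots, \grad f_{N-d}(z))^{\perp}\) for \(z \in M\). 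Since \(\Pi_n \restriction M\) has constant rank \(l\), each fibre \(\{x\} \times M_x = M \cap (\{x\} \times \real^k)\) is a submanifold of \(M\), and for \(z = (x, y) \in M\) we get
\[
  \tangent{z}{\{x\} \times M_x}
  = \tangent{z}{M} \cap (\{0\}^n \times \real^k)
  = \operatorname{span}\bigl(\grad f_1(z), \dots, \grad f_{N-d}(z), \varepsilon_1, \dots, \varepsilon_n\bigr)^{\perp},
\]
where \(\varepsilon_1, \dots, \varepsilon_n\) are the first \(n\) standard basis vectors of \(\real^{N}\) and we used \(\{0\}^n \times \real^k = \operatorname{span}(\varepsilon_1, \dots, \varepsilon_n)^{\perp}\). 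The left-hand side has dimension \(d - l\), so for every \(z \in M\) the span on the right has dimension \(N - d + l\).

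Next I would choose, once and for all, indices \(j_1 < \dots < j_l\) in \(\{1, \dots, n\}\) and vectors \(e_1, \dots, e_{d-l} \in \real^{N}\) such that
\[
  \grad f_1(0), \dots, \grad f_{N-d}(0), \varepsilon_{j_1}, \dots, \varepsilon_{j_l}, e_1, \dots, e_{d-l}
\]
is a basis of \(\real^{N}\); this is possible for the same reason as in Proposition \ref{basis}, the \(\grad f_i(0)\) being independent and there being enough room among \(\varepsilon_1, \dots, \varepsilon_n\) to enlarge their span by \(l\) dimensions. By continuity of the corresponding determinant, after shrinking \(\polydisk{N}{r}\) we may assume this family is still a basis of \(\real^{N}\) when \(\grad f_i(0)\) is replaced by \(\grad f_i(z)\), for every \(z \in \polydisk{N}{r}\). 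Running the Gram--Schmidt orthonormalization process on it produces, for each \(z \in \polydisk{N}{r}\), an orthonormal basis \(a_1(z), \dots, a_{N-d}(z), a'_1(z), \dots, a'_l(z), b_1(z), \dots, b_{d-l}(z)\) of \(\real^{N}\), and, as in the proof of Proposition \ref{basis}, all components of \(a_i\), \(a'_m\) and \(b_m\) lie in \(\germalg{N}\).

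It remains to verify the conclusion for \(z \in M \cap \polydisk{N}{r}\). The vectors \(a_1(z), \dots, a_{N-d}(z), a'_1(z), \dots, a'_l(z)\) span \(\operatorname{span}(\grad f_1(z), \dots, \grad f_{N-d}(z), \varepsilon_{j_1}, \dots, \varepsilon_{j_l})\), which has dimension \(N - d + l\) and is contained in \(\operatorname{span}(\grad f_1(z), \dots, \grad f_{N-d}(z), \varepsilon_1, \dots, \varepsilon_n)\); the latter also has dimension \(N - d + l\), so the two coincide. Hence each \(b_m(z)\) is orthogonal to \(\operatorname{span}(\grad f_1(z), \dots, \grad f_{N-d}(z), \varepsilon_1, \dots, \varepsilon_n)\), that is, \(b_m(z) \in \tangent{z}{\{x\} \times M_x}\) with \(x = \Pi_n(z)\); since the \(b_m(z)\) are orthonormal and there are \(d - l = \dim \tangent{z}{\{x\} \times M_x}\) of them, they form a basis. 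The point needing the most care is exactly this dimension bookkeeping together with the choice of the \(\varepsilon_{j_i}\): one has to be sure that the \(l\) extra coordinate directions picked at \(0\) account, all along \(M\), for the gap between \(\dim \tangent{z}{M}\) and \(\dim \tangent{z}{\{x\} \times M_x}\) — which is precisely where the constant-rank hypothesis on \(\Pi_n \restriction M\) is used.
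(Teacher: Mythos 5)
Your proof is correct and reaches the conclusion by a genuinely different, ``dual'' route. The paper starts from Proposition \ref{basis}: it takes a moving tangent frame \(a_1(z), \dots, a_d(z)\) of \(\tangent z M\), reorders it so that \(\Pi_n(a_1(0)), \dots, \Pi_n(a_l(0))\) span \(\Pi_n(\tangent 0 M)\), and then runs a \emph{modified} Gram--Schmidt in which the inner products are taken between the \(\Pi_n\)-projections of the vectors; as the remark after the lemma stresses, the resulting \(\widetilde{a_i}(z)\) are not orthonormal --- only their projections are --- and the \(b_i\) are obtained by subtracting off these projected components, so that \(\Pi_n(b_i(z)) = 0\) while \(b_i(z)\) remains in \(\tangent z M\). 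You instead work with the normal frame \(\grad f_1, \dots, \grad f_{N-d}\) augmented by the coordinate vectors \(\varepsilon_{j_1}, \dots, \varepsilon_{j_l}\) and the \(e_m\), apply \emph{standard} Gram--Schmidt in \(\real^{N}\), and identify \(\tangent z {\{x\} \times M_x}\) as the orthogonal complement of \(V_z + W\), where \(V_z = \operatorname{span}(\grad f_i(z))\) and \(W = \operatorname{span}(\varepsilon_1, \dots, \varepsilon_n)\); the constant-rank hypothesis enters only through the count \(\dim(V_z + W) = N - d + l\), which forces your \((N-d+l)\)-dimensional span to coincide with \(V_z + W\) for \(z \in M\). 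Your version avoids the non-standard orthonormalization entirely, which is a genuine simplification; the paper's version builds directly on Proposition \ref{basis} and produces, as a by-product, an orthonormal frame of \(\Pi_n(\tangent z M)\), in line with how ranks of projections are handled elsewhere in the text. Note finally that both arguments rest on the same implicit assumption at the base point: you need \(\dim(V_0 + W) \geq N - d + l\) to choose the indices \(j_1 < \dots < j_l\), while the paper needs \(\dim \Pi_n(\tangent 0 M) \geq l\) to perform its reordering, and these two conditions are equivalent; so this is a shared feature, not a defect of your proof relative to the paper's.
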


\begin{proof}
  There are a polydisk \(\polydisk {n+k} r\) and functions \(a_1,
  \dots, a_d \colon \polydisk {n+k} r \to \real^{n+k}\), all of whose
  components are in \(\germalg {n+k}\), and such that \(a_1(z),
  \dots, a_d(z)\) is a basis of \(\tangent z M\) for all \(z \in
  M \cap \polydisk {n+k} r\). Up to reordering these functions, we
  may assume that
  \(\Pi_n(a_1(0)), \dots, \Pi_n(a_l(0))\) is a basis of
  \(\Pi_n(\tangent 0 M)\). By the hypothesis on constant rank, up to
  shrinking \(\polydisk {n+k} r\), we may also assume that
  \(\Pi_n(a_1(z)), \dots,
  \Pi_n(a_l(z))\) is a basis of \(\Pi_n(\tangent z M)\) for all \(z
  \in M \cap \polydisk {n+k} r\).

  Then, define functions \(\widetilde{a_1}, \dots, \widetilde{a_l}
  \colon \polydisk {n+k} r \to \real^{n+k}\)
  inductively as
  \begin{align*}
    \widetilde{a_1}(z) &= \frac{a_1(z)}{\norm{a_1(z)}}\\[10pt]
    \widetilde{a_i}(z) &= \frac{a_i(z) - \sum_{j < i} \langle
                         \Pi_na_i(z), \Pi_n \widetilde{a_j}(z) \rangle
                         \widetilde{a_j}(z)}{\norm{\Pi_na_i(z) -
                         \sum_{j < i} \langle \Pi_n a_i(z), \Pi_n
                         \widetilde{a_j}(z) \rangle \Pi_n
                         \widetilde{a_j}(z)}},
  \end{align*}
  where \(\langle \cdot, \cdot \rangle\) is the scalar product. 
  It is easy to see that the components of \(\widetilde{a_1}, \dots,
  \widetilde{a_l}\) are in \(\germalg {n+k}\) and that
  \(\Pi_n\widetilde{a_1}(z), \dots, \Pi_n\widetilde{a_l}(z)\) is an
  orthonormal basis of \(\Pi_n(\tangent z M)\) for every \(z \in M
  \cap \polydisk {n+k} r\).

  Finally, we define functions \(b_1,
  \dots, b_{d-l}\colon \polydisk {n+k} r \to \real^{n+k}\) as
    \[b_i(z) = a_{l+i}(z) - \sum_{j=1}^l \langle \Pi_n a_{l+i}(z),
    \Pi_n\widetilde{a_j}(z) \rangle \widetilde{a_j}(z).\]
  The components of the functions \(b_1, \dots, b_{d-l}\) are in
  \(\germalg {n+k}\) and the family
  \(\widetilde{a_1}(z), \dots, \widetilde{a_l}(z), b_1(z), \dots,
  b_{d-l}(z)\) is a basis of 
  \(\tangent z M\) for all \(z \in M \cap \polydisk {n+k} r\). Furthermore,
  \(\Pi_n(b_i(z)) = 0\) for
  \(z \in M\) so that the vectors \(b_1(x), \dots, b_{d-l}(x)\) make
  up a basis for \(\tangent z {\{x\} \times M_x}\). 
\end{proof}

\begin{remark}
  Notice that the formulas giving the functions \(\widetilde{a_1},
  \dots, \widetilde{a_n}\) are reminiscent of the Gram-Schmidt
  orthonormalization algorithm. However, we have included the formulas
  because there is a subtlety, namely, we do not necessarily want
  \(\widetilde{a_1}(z), \dots, \widetilde{a_n}(z)\) to be
  orthonormal. Rather, we wish to show that their projections on the
  first \(n\) coordinates are orthonormal. 
\end{remark}

The following lemma will be crucial in order to obtain a contradiction
in the proof of the Local Fiber Cutting Lemma below. It is proven in
\cite{rolin-speissegger-wilkie-denjoy-carleman-classes}, in the
paragraph before their Lemma 4.5.

\begin{lem}
  \label{frontier}
  Let \(M \subset \real^{n+k}\) be an \(H\)-manifold of dimension
  \(d\) such that \(\Pi_n \restriction M\) has constant rank \(l <
  d\). Assume also that
  there is a strictly increasing sequence \(\iota \colon \{1, \dots,
  d\} \to \{1, \dots, n+k\}\) such that \(\Pi_\iota \restriction M\)
  is an immersion and \(\iota(l) \leq n\). Consider \(y \in \Pi_n(M)\)
  and let \(C\) be a 
  connected component of the fiber \(M_y\). Then, \(\frontier C \neq
  \varnothing\). 
\end{lem}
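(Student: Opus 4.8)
The plan is to argue by contradiction. If $\frontier C = \varnothing$ then $C = \clos C$ is closed in its ambient space; since $M$, and therefore $C$, is contained in the polydisk witnessing that $M$ is an $H$-manifold, $C$ is also bounded, hence compact. I will contradict this by producing a continuous open map from $C$ onto an open subset of $\real^{d-l}$: such an image would then be both compact and a non-empty open subset of $\real^{d-l}$, which is impossible because $\real^{d-l}$ is connected and, as $l < d$, non-compact.

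To build the map I would first recall that, $\Pi_n \restriction M$ having constant rank $l$, the fibre $M_y$ is a manifold of dimension $d-l$ (as used before Lemma \ref{basis-fibers} and in Paragraph \ref{par:dimension}), and that $C$, being a connected component of $M_y$, is a non-empty open submanifold of it; I identify $M_y$ with the submanifold $\Pi_n^{-1}(y) \cap M$ of $\real^{n+k}$. The crucial observation is that $d-l$ of the coordinates picked out by $\iota$ restrict to local coordinates on $M_y$: set $\iota'' = (\iota(l+1), \dots, \iota(d))$ and fix a point $z$ of the fibre. Every vector of $\tangent z {\{y\} \times M_y}$ has vanishing first $n$ coordinates, hence vanishing $\iota(1), \dots, \iota(l)$ coordinates, since $\iota$ is increasing with $\iota(l) \leq n$; as $\Pi_\iota \restriction M$ is an immersion, $d(\Pi_\iota)_z$ restricts to an injective map on $\tangent z {\{y\} \times M_y}$ whose image therefore lies in, and by equality of dimensions coincides with, $\{0\}^l \times \real^{d-l} \subset \real^d$. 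Composing with the projection onto the last $d-l$ coordinates shows $d(\Pi_{\iota''})_z$ is an isomorphism on $\tangent z {\{y\} \times M_y}$; since $M_y$ and $\real^{d-l}$ have the same dimension, $\Pi_{\iota''} \restriction M_y$ is a local diffeomorphism, in particular an open map.

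Granting this, the contradiction is immediate: $\Pi_{\iota''}(C)$ is compact, being the continuous image of the compact set $C$; it is open in $\real^{d-l}$, being the image of the open subset $C$ of $M_y$ under the open map $\Pi_{\iota''} \restriction M_y$; and it is non-empty. Connectedness of $\real^{d-l}$ then forces $\Pi_{\iota''}(C) = \real^{d-l}$, contradicting the non-compactness of $\real^{d-l}$ for $d-l \geq 1$.

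The only step that is not a formality is the middle one — checking that the immersion $\Pi_\iota \restriction M$, together with $\iota(l) \leq n$, really does turn the last $d-l$ coordinates of $\iota$ into a chart on each fibre, i.e. that $d\Pi_\iota$ carries the tangent space of the fibre isomorphically onto $\{0\}^l \times \real^{d-l}$. Everything else is the standard point-set package: bounded closed sets are compact, open maps preserve openness, and a connected non-compact space has no non-empty compact open subset.
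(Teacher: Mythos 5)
Your proof is correct, and it is essentially the standard argument: the paper does not reproduce a proof but cites the paragraph before Lemma 4.5 of Rolin--Speissegger--Wilkie, where exactly this reasoning appears (empty frontier forces $C$ to be compact, while the hypothesis $\iota(l)\leq n$ together with the immersion property makes $\Pi_{(\iota(l+1),\dots,\iota(d))}$ a local diffeomorphism of the fiber onto an open subset of $\real^{d-l}$, so the image of $C$ would be a non-empty compact open subset of the connected non-compact space $\real^{d-l}$). Your middle step — the dimension count showing that $d(\Pi_\iota)_z$ carries the fiber's tangent space onto $\{0\}^l\times\real^{d-l}$ — is the right justification and is carried out correctly.
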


\begin{prp}[Local Fiber Cutting Lemma]
  \label{local-fiber-cutting}
  Let \(M \subset \real^{n+k}\) be an \(H\)-manifold of dimension
  \(d\) such that \(\Pi_n \restriction M\) has constant rank \(l <
  d\). Assume also that there is a strictly increasing sequence \(\iota
  \colon \{1, \dots, d\} \to \{1, \dots, n+k\}\) with \(\iota(l) \leq
  n\) and such that \(\Pi_\iota \restriction M\) is an immersion.
  Then, there are a polydisk \(\polydisk n r\) and a simple
  sub-\(\Lambda\)-set \(A
  \subset M\) such that \(\dim A < \dim M\) and \(\Pi_n(M \cap
  \polydisk {n+k} r) = \Pi_n(A)\). 
\end{prp}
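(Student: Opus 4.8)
The strategy is to cut each fibre $M_y$ of $\Pi_n\restriction M$ — a manifold of dimension $d-l\geq 1$ — down by one dimension, taking for $A$ an apparent contour and using Lemma \ref{frontier} to ensure the cut is nowhere trivial. A couple of preliminary observations: since $\iota(l)\leq n$ and $\Pi_\iota\restriction M$ is an immersion, the indices $\iota(l+1),\dots,\iota(d)$ all exceed $n$, so each $x_{\iota(j)}$ with $j>l$ is a genuine coordinate on every fibre; and when $l=0$ the set $\Pi_n(M\cap\polydisk{n+k}r)$ is a $0$-dimensional sub-$\Lambda$-set, hence finite by Lemma \ref{lem:dimension-0-finite}, so one may just take $A$ to be a finite subset of $M$ meeting every fibre — the interesting case is $l\geq 1$. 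For $l\geq 1$ I would begin with Lemma \ref{basis-fibers}, which supplies a polydisk $\polydisk{n+k}r$ and functions $b_1,\dots,b_{d-l}\colon\polydisk{n+k}r\to\real^{n+k}$ with all components in $\germalg{n+k}$, such that $b_1(z),\dots,b_{d-l}(z)$ is a basis of $\tangent z{\{x\}\times M_x}$ for $z\in M\cap\polydisk{n+k}r$ and $\Pi_n(b_i(z))=0$. I would then pick a function $\psi\in\germalg{n+k}$ — adjoining, if needed, a graph coordinate $x_{n+k+1}=\psi(z)$ to enlarge the ambient space — chosen so that $\psi\restriction M_y$ is non-constant for $y$ near $0$, and set
\[A=\bigl\{z\in M\cap\polydisk{n+k}r\for\grad\psi(z)\cdot b_1(z)=\dots=\grad\psi(z)\cdot b_{d-l}(z)=0\bigr\},\]
the set of critical points of $\psi$ restricted to the fibres of $\Pi_n\restriction M$. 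Since the functions $z\mapsto\grad\psi(z)\cdot b_i(z)$ belong to $\germalg{n+k}$, the set $A$ is an $H$-set contained in $M$, hence a simple sub-$\Lambda$-set by Theorem \ref{basic-simple-sub-lambda-set} after shrinking $\polydisk{n+k}r$; if a graph coordinate was added, projecting it away preserves all of these properties.

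For the dimension bound, note that if all the functions $\grad\psi\cdot b_i$ vanished on a subset of $M$ open in $M$ with non-empty germ at $0$, then Proposition \ref{vanishing} would force them to vanish on $M\cap U$ for a neighbourhood $U$ of $0$, hence $\psi$ would be locally constant on the fibres near $0$, contrary to the choice of $\psi$. So at least one $\grad\psi\cdot b_i$ has a zero set whose germ at $0$ is strictly smaller than that of $M$; cutting $M$ by the equations $\grad\psi\cdot b_i=0$ then drops the fibre dimension by at least one, and Proposition \ref{dimension-fibers} gives $\dim A\leq l+(d-l-1)=d-1<\dim M$. The inclusion $\Pi_n(A)\subseteq\Pi_n(M\cap\polydisk{n+k}r)$ is immediate from $A\subseteq M$.

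The crux is the reverse inclusion, i.e. $A\cap M_y\neq\varnothing$ for every $y$ near $0$, equivalently that $\psi\restriction M_y$ has a critical point. If it did not, then $\psi\restriction M_y$ would be a submersion onto a bounded interval (bounded because $M$ lies in a polydisk), a boundary value of which is approached along a sequence in some component $C$ of $M_y$ that, by boundedness, accumulates at a point of $\frontier C$; this is exactly the situation where Lemma \ref{frontier} and the structure of $\clos M$ near $0$ — together with quasianalyticity in the form of Proposition \ref{vanishing} — should be combined to derive a contradiction, and at the same time to dictate the correct choice of $\psi$. I expect producing $\psi$ with the right extremal behaviour on every fibre, and extracting this contradiction, to be the genuinely delicate step; when $d-l\geq 2$ a single cut will not exhaust the fibre and one iterates, letting $A$ be the union of the successive critical sets, each a lower-dimensional simple sub-$\Lambda$-set contained in $M$. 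Everything else — that $A$ has dimension $<\dim M$ and is a simple sub-$\Lambda$-set inside $M$ — is a routine assembly of Lemma \ref{basis-fibers}, Propositions \ref{vanishing} and \ref{dimension-fibers}, and Theorem \ref{basic-simple-sub-lambda-set}.
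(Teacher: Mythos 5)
Your skeleton is the right one --- take the fibrewise critical locus of a function, describe it with the basis from Lemma \ref{basis-fibers}, and invoke Proposition \ref{vanishing}, Proposition \ref{dimension-fibers} and Lemma \ref{frontier} --- but the two places you defer are exactly where the proof lives, and as written both have genuine gaps. First, the choice of \(\psi\). You ask for \(\psi\) ``with the right extremal behaviour on every fibre'' and admit you do not know how to produce it; the paper's answer is explicit and makes the surjectivity trivial: take
\[\varphi(z) = g_1(z)\cdots g_q(z)\,(r_1^2-z_1^2)\cdots(r_{n+k}^2-z_{n+k}^2),\]
where the \(g_i\) are the strict inequalities defining \(M\) inside \(\polydisk{n+k}r\). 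This \(\varphi\) is positive on \(M\) and vanishes on \(\frontier M\), so on the compact set \(\clos{M_x}\) it attains a maximum which must lie in \(M_x\) itself and is therefore a critical point of \(\varphi\restriction\{x\}\times M_x\). That gives \(\Pi_n(A)=\Pi_n(M)\) at once, with no iteration over \(d-l\) and no separate argument for non-critical \(\psi\restriction M_y\) being ``a submersion onto a bounded interval''.

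Second, your dimension argument does not close. Proposition \ref{vanishing} only gives vanishing of the \(\grad\varphi\cdot b_i\) on \(M\cap U\) for some neighbourhood \(U\) of \(0\); that makes \(\varphi_x\) constant on the components of \(M_x\cap U\), not on whole components of \(M_x\), so it does not contradict anything: \(\varphi_x\) can perfectly well be non-constant on \(M_y\) while being critical everywhere on \(M_y\cap U\). To reach the contradiction with Lemma \ref{frontier} one needs an entire component \(C\) of some fibre on which \(\varphi_x\) is a positive constant (then \(\varphi_x\) is that constant on \(\clos C\) yet vanishes on \(\frontier C\), forcing \(\frontier C=\varnothing\)). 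The paper arranges this by working with polydisks \emph{compatible} with \(M\) (those meeting a fibre only if they contain it), which form a neighbourhood basis of \(0\) only when \(M\) lies in a ``butterfly shape'' \(\abs{y_i}<\abs{x_j}\); the general case is then reduced to this one by adjoining the polyradius \(r'\) as \(n+k\) new variables and applying the special case to the resulting manifold \(\widetilde M\subset\real^{2(n+k)}\). The Appendix shows this is not a technicality: for \(M=\{y>x\}\) the critical locus \(A\) has empty germ at \(0\), so arguments that restrict \(A\) to small polydisks, or that only control germs at \(0\) as yours does, genuinely fail. (Your \(l=0\) remark is fine, and your formula for \(A\) and its \(H\)-basicness are correct.)
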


\begin{proof}
  For \(z \in \real^{n+k}\), we will write throughout the proof \(z =
  (x,  y)\) where \(x = (x_1, \dots, x_n)\) and \(y = (y_1, \dots,
  y_k)\). Thus, in particular, \(x = \Pi_n(z)\). We assume throughout
  that \(M\) has non-empty germ at \(0\) since there is nothing to
  prove otherwise. 
  There are a polydisk \(\polydisk {n+k} r \subset \real^{n+k}\), a
  natural number \(q \in \nat\) and functions \(f_1, \dots,
  f_{n+k-d}, g_1, \dots, g_q \in \germalg
  {n+k}\) that are defined and \(C^1\) on \(\polydisk
  {n+k} r\), such that \(\grad f_1(x), \dots, \grad f_{n+k-d}(x)\) are
  linearly independent for every \(x \in \polydisk {n+k} r\) and
  \[M = \{x \in \polydisk {n+k} r \for f_1(x) = \dots = f_{n+k-d}(x) =
    0, g_1(x) > 0, \dots, g_q(x) > 0\}.\]
  By Lemma \ref{basis-fibers}, up to shrinking \(\polydisk {n+k} r\), we may
  assume that there are functions \(b_1, \dots, b_{d-l} \colon
  \polydisk {n+k} r \to \real^{n+k}\), all of whose components are in
  \(\germalg {n+k}\) and such that \(b_1(z), \dots, b_{d-l}(z)\) make
  up a basis of \(\tangent z {\{x\} \times M_x}\) for each \(z \in
  \real^{n+k}\). We are going to prove the result in two steps. We
  will begin by proving the result under an additional assumption
  about \(M\) and we will then reduce the general case to the special
  one. \\

  \textit{Special case:} For now, we assume that, for each \(1 \leq i
  \leq k\), there is some \(1 \leq j \leq n\) such that \(\abs{y_i} <
  \abs{x_j}\) whenever \(z = (x,y) \in M\).
  This hypothesis means that \(M\) is contained in a ``butterfly
  shape''. We have represented the case \(n = k =1\) in the picture on
  the left in figure \ref{fig:fiber-cutting}, where \(M\) would be
  contained in the orange area.

  \begin{figure}
    \begin{tikzpicture}[scale = 0.6]
      \draw[step=1cm,gray,very thin,opacity=0.5] (-4.9,-4.9) grid
      (4.9,4.9);
      \draw[thick,->] (-4.5,0) -- (4.5,0) node[anchor =
      north east]{\(x\)};
      \draw[thick,->] (0,-4.5) -- (0,4.5)
      node[anchor = north east]{\(y\)}; \draw (-4, -4) -- (4, 4);
      \draw (-4, 4) -- (4, -4);
      \fill [orange!60!white, opacity = 0.3] (-4, 4) -- (-4, -4) --
      (0, 0) -- cycle; \fill [orange!60!white, opacity = 0.3] (4, 4)
      -- (4, -4) -- (0, 0) -- cycle;
    \end{tikzpicture}
    \hspace{20pt}
    \begin{tikzpicture}[scale=0.6]
      \draw[step=1cm,gray,very thin,opacity=0.5] (-4.9,-4.9) grid
      (4.9,4.9);
      \draw[thick,->] (-4.5,0) -- (4.5,0) node[anchor = north east]{\(x\)};
      \draw[thick,->] (0,-4.5) -- (0,4.5) node[anchor = north east]{\(y\)};
      \draw (-4, -4) -- (4, 4);
      \draw (-4, 4) -- (4,-4);
      \draw (-3, -2) rectangle (3, 2) node[anchor=north west]
      {\(\polydisk {2(n+k)} {s'}\)};
      \fill [green!60!white, opacity = 0.3] (-2, -2) rectangle (2, 2);
    \end{tikzpicture}
    \caption{}
    \label{fig:fiber-cutting}
  \end{figure}

  We define a function \(\varphi \in \germalg {n+k}\) defined and
  \(C^1\) on \(\polydisk {n+k} r\) by
  \[\varphi(z) = g_1(z) \dots g_q(z) (r_1^2 - z_1^2) \dots (r_{n+k}^2
    - z_{n+k}^2).\] 
  If \(z \in M\) then \(\varphi(z) > 0\) but 
  \(\varphi(z) = 0\) whenever \(z \in \frontier M = \clos M \setminus
  M\). Now, let \(x \in \Pi_n(M)\).
  Since \(\clos {M_x}\) is compact and non-empty, there is \(y_0 \in
  \clos {M_x}\) such that \(\varphi(x, y_0)\) is maximal. From \(y_0
  \in \clos {M_x}\), we deduce that \((x, y_0) \in \clos M\). 
  Also, for every \(y \in M_x\), we have \(\varphi(x, y) > 0\) so that
  \(\varphi(x, y_0) > 0\) by maximality. Thus, \((x, y_0) \not \in
  \frontier M\) so that \((x, y_0) \in M\) whence \(y_0 \in
  M_x\). Furthermore, \(\varphi \restriction \{x\} \times M_x\) is
  critical at \(z\). Thus, if we let
  \[A = \{z \in M \for \varphi \restriction \{x\} \times M_x \text{ is
      critical at } z\},\]
  then we obtain \(\Pi_n(M) = \Pi_n(A)\).
  We also have
  \[A = \{z \in M \for \langle \grad \varphi(z), b_1(z) \rangle =
    \dots = \langle \grad \varphi(z), b_{d-l}(z) \rangle = 0\}\]
  so that \(A\) is a \(H\)-basic set. \\

  A polydisk \(\polydisk {n+k} {r'} \subset \polydisk {n+k} r\), with
  \(r' = (s', t')\), is said to be \textbf{compatible} with \(M\)
  whenever we have \((M \cap \polydisk {n+k} {r'})_x = M_x\) for every
  \(x \in \real^n\) such that \((M \cap \polydisk {n+k} {r'})_x \neq
  \varnothing\). Such polydisks form a fundamental system of
  neighborhoods of \(0\), as shown in the picture on the right in
  figure \ref{fig:fiber-cutting}, where the green area is a polydisk
  compatible with \(M\).
  Furthermore, let \(\polydisk {n+k} {r'}\) be a polydisk compatible
  with \(M\) and \(x \in \Pi_n(M \cap \polydisk {n+k} {r'})\). Take
  \(y\) such that \(z = (x, y) \in A\). Since \(A 
  \subset M\), we have \(y \in M_x\) so that \(z \in \polydisk
  {n+k}{r'}\) by the compatibility assumption. Thus,
  \[\Pi_n(M \cap \polydisk {n+k} {r'}) = \Pi_n(A \cap \polydisk {n+k}
    {r'}).\]
  In view of the equality above, the fact that \(A\) is \(H\)-basic
  and Theorem \ref{basic-simple-sub-lambda-set}, it suffices to prove
  that there exists a polydisk \(\polydisk {n+k} {r'}\) which is
  compatible with \(M\) and such that \(\dim(A \cap
  \polydisk {n+k} {r'}) < \dim(M)\). \\

  Suppose for a contradiction that this is not the case and let \(B\)
  be the interior of \(A\) in \(M\). Then, for every polydisk
  \(\polydisk {n+k} {r'} \subset \polydisk {n+k} r\) which is
  compatible with \(M\), we have \(\dim(A \cap
  \polydisk {n+k} {r'}) = \dim(M)\) by assumption so that \(A \cap
  \polydisk {n+k} {r'}\) has non-empty interior in \(M\). In
  particular, \(B \cap \polydisk {n+k} {r'} \neq \varnothing\). Since
  such polydisks form a fundamental system of neighborhoods of \(0\),
  it follows that \(0 \in \clos B\). By Proposition \ref{vanishing},
  there exists a polydisk \(\polydisk {n+k} {r'} \subset \polydisk
  {n+k} r\) which is compatible with \(M\) and such that
  \(M \cap \polydisk {n+k} {r'} \subset A\).

  Let \(r' = (s', t')\) and consider \(x \in \Pi_n(M) \cap \polydisk n
  {s'}\). By compatibility of \(\polydisk {n+k}{r'}\), it follows that
  \(M_x \subset \polydisk {n+k}{r'}\). Thus, \(A_x = M_x\) whence
  \(\varphi_x = \varphi(x, \cdot)\) is constant along any connected
  component of \(M_x\). 
  Let \(C\) be such a connected component. Then, \(\varphi_x\)
  must also be constant on \(\clos C\) and, since \(\varphi_x\) is
  positive on \(C\) and vanishes on \(\frontier C\), it
  follows that \(\frontier C = \varnothing\) which is a contradiction
  by Lemma \ref{frontier}. \\

  \textit{General case:} We now return to the general case. We are
  looking for some polydisk \(\polydisk {n+k} {r'}\) such that there
  exists a simple sub-\(\Lambda\)-set \(A \subset M \cap \polydisk
  {n+k} {r'}\) with \(\dim A < \dim M\) and \(\Pi_n(M \cap \polydisk
  {n+k} {r'}) = \Pi_n(A)\). We may view \(r'\) as some sort of
  parameter. The idea of the proof is to replace \(M\) with a
  \(H\)-manifold in \(\real^{2(n + k)}\), where we have turned the
  parameter \(r'\) into a tuple of variables. The precise definition
  of \(\widetilde M\) is as follows:
  \[\widetilde M = \{(r', z) \for 0 < r_i' <  r_i \text{ for all }
    1 \leq i \leq n+k \text{ and } z \in M \cap \polydisk {n+k}
    {r'}\}.\]
  In particular, for every \(r' \in \real^{n+k}\) such
  that \(0 < r'_i < r_i\) for each \(1 \leq i \leq n+k\), we have
  \[\widetilde M_{r'} = M \cap \polydisk {n+k} {r'}.\]
  Thus, \(\widetilde M\) can be thought of as a parametrized family of
  restrictions of \(M\). 
  
  Notice also that \(\widetilde M\) is a \(H\)-manifold of dimension
  \(n+k+d\) such that \(\Pi_{2n+k} \restriction \widetilde M\) has constant
  rank \(n+k+l < n+k+d\) and that \(\widetilde M\) is contained in a
  butterfly shape. Indeed, \(\abs {y_i} < t'_i\) for every \(1
  \leq i \leq k\) and every \((r', z) \in M\), where \(z = (x,
  y)\) and \(r' = (s', t')\). By the special case of the result, there
  are a simple
  sub-\(\Lambda\)-set \(\widetilde A \subset \widetilde M\) and a
  polydisk \(\polydisk {2(n+k)} R\) such that \(\Pi_{2n+k}(\widetilde
  M \cap \polydisk {2(n+k)} R) = \Pi_{2n+k}(\widetilde A)\) and
  \(\dim(\widetilde A) < \dim(\widetilde M)\).

  We now want to use Proposition \ref{dimension-fibers} to show that
  we can find \(r'\) arbitrarily close to \(0\) such that
  \(\dim(\widetilde A_{r'}) < \dim(M)\). 
  Thus, suppose for a contradiction that there is \(r''\) such that
  \(\dim(\widetilde
  A_{r'}) = \dim(M)\) whenever \(0 < r'_i < r''_i\) for each \(1 \leq
  i \leq n+k\). It follows that all the fibers of \(\widetilde A \cap
  (\polydisk {n+k} {r''} \times \real^{n+k})\) have the same
  dimension. Furthermore,
  \begin{align*}
    \dim(\Pi_{n+k}(\widetilde A \cap (\polydisk {n+k}{r''} \times
    \real^{n+k}))) 
    &= \dim(\Pi_{n+k}(\widetilde A) \cap \polydisk {n+k}{r''}) \\
    &= \dim(\Pi_{n+k}(\widetilde M) \cap \polydisk {n+k} {r''}) \\
    &= n+k.
  \end{align*}
  Thus, by Proposition \ref{dimension-fibers}, we obtain
  \(\dim(\widetilde A \cap (\polydisk {n+k} {r''} \times \real^{n+k}))
  = \dim(M) + n+k = \dim(\widetilde M)\) which is a contradiction. 

  Thus, it is now sufficient to show that there is \(r''\) as above
  such that \(\Pi_n(\widetilde A_{r'}) = \Pi_n(M \cap \polydisk {n+k}
  {r'})\) for every \(r' \in \real^{n+k}\) such that \(0 < r'_i <
  r''_i\) whenever \(1 \leq i \leq n+k\). Up to shrinking
  \(\polydisk {2(n+k)} R\), we may assume
  that \(R\) has the form \((r'', r'')\) for some polyradius
  \(r''\). Then, given \(r'\) such that \(0 < r'_i < r''_i\) for every
  \(1 \leq i \leq n+k\), we have \(\{r'\} \times \widetilde M_{r'}
  \subset \polydisk {2(n+k)} R\) whence \(\Pi_n(\widetilde M_{r'}) =
  \Pi_n(\widetilde A_{r'})\). Furthermore, by definition of \(\widetilde
  M\), we have \(\widetilde M_{r'} = M \cap \polydisk {n+k} {r'}\)
  which concludes. 
\end{proof}

\begin{remark}
  We reuse the notation of the proof. If we assume
  \(\mathcal{A}\)-analyticity \cite[Definition
  1.10]{rolin-servi-monomialization}, then we can show that \(B\) is
  both open and closed. Indeed, consider \(a \in \clos B\). By
  \(\mathcal{A}\)-analyticity, the translation of \(A\) at \(a\) is
  still a set defined by equations so that we may apply
  Lemma \ref{vanishing} to deduce that \(a \in B\) . Thus, \(B\)
  contains every connected component that it intersects which yields a
  contradiction whenever \(B \neq \varnothing\). Notice that this
  argument does not depend on \(M\) being contained in a butterfly
  shape and that it proves the stronger result that \(\dim A < \dim
  M\).

  However, in the absence of \(\mathcal{A}\)-analyticity, we can only
  show that, when \(0 \in \clos B\), we must also have \(0 \in
  B\). Thus, we only get a contradiction when the germ of \(B\) at
  \(0\) is non-empty whence we can only prove that the dimension of
  \(A\) is small \textit{in a neighborhood of \(0\)}. Furthermore,
  this contradiction depends on the fact that every neighborhood of
  \(0\) contains a connected component of a fiber of \(M\). This does
  not hold in the general case (see Appendix \ref{appendix}) but it is
  true in the special case that
  \(M\) is contained in a butterfly shape (see the picture on the
  right in figure \ref{fig:fiber-cutting}).

  Actually, the proof we give for the special case still applies when
  the butterfly shape is ``distorted'', namely when the lines bounding
  the butterfly are not straight. More precisely, suppose that there
  are \(C^1\) functions \(h_1, \dots, h_k \in \germalg n\) defined on
  \(\polydisk n s\), where \(r = (s, t)\), such
  that \(h_1(0) = \dots = h_k(0) = 0\) and \(\abs{y_i} <
  h_i(x)\) for every \(z = (x, y) \in M\) and \(1 \leq i \leq
  k\). Then, it is still true that the polydisks compatible with \(M\)
  form a fundamental system of neighborhoods of \(0\) so that the
  proof given for the special case above applies in this case as
  well. 
\end{remark}

\subsection{The Global Fiber Cutting Lemma}
\label{par:global-fiber-cutting}

In this paragraph, we start by giving a global version of the Fiber
Cutting Lemma. We use it inductively to prove that every
sub-\(\Lambda\)-set is simple. Finally, using Gabrielov's Theorem of
the Complement \cite[Theorem
2.7]{vdDries-Speissegger-generalized-power-series}, we show that the
\(\struct\) is o-minimal and model complete. 

\begin{prp}
  Consider \(B \subset \real^{n+k}\) an \(H\)-basic
  set such that \(\dim(\Pi_n(B)) < \dim(B)\). Then, there are
  \(\Lambda\)-sets \(A_1, \dots, A_p\) such that
  \begin{itemize}
  \item For every \(1 \leq i \leq p\), there is some integer \(h_i
    \geq 0\) such that \(A_i \subset \real^{n+k+h_i}\), \(\dim(A_i)
    < \dim(B)\), \(\Pi_{n+k}(A_i) \subset B\) and \(\Pi_{n+k}
    \restriction A_i\) has finite fibers.
  \item There is a neighborhood \(U \subset \real^{n+k}\) of \(0\)
    such that
    \[\Pi_n(B \cap U) = \Pi_n(A_1) \cup \dots \cup \Pi_n(A_p).\]
  \end{itemize}
\end{prp}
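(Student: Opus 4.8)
The plan is to combine the local parametrization of Proposition \ref{local-parametrization-constant-rank} with the Local Fiber Cutting Lemma (Proposition \ref{local-fiber-cutting}), applied piece by piece, and then to replace each resulting simple sub-\(\Lambda\)-set by its witnessing \(\Lambda\)-set. First one should dispose of the case where \(B\) has empty germ at \(0\): then \(B \cap U = \varnothing\) for a small neighbourhood \(U\) of \(0\), the hypothesis \(\dim(\Pi_n(B)) < \dim(B)\) forces \(B \neq \varnothing\) and hence \(\dim B \geq 0 > \dim \varnothing\), so the statement holds with \(p = 1\) and \(A_1 = \varnothing\). So assume henceforth that \(B\) has non-empty germ at \(0\).

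Now apply Proposition \ref{local-parametrization-constant-rank} to the \(H\)-basic set \(B \subset \real^{n+k}\), obtaining a family \((M_j)_{j \in J}\) of \(H\)-manifolds with \(M_j \subset \real^{n+k+h_j}\), with \(\Pi_{n+k} \restriction M_j\) a diffeomorphism onto its image (so that \(\Pi_{n+k}(M_j) \subset B\) and \(\dim M_j = \dim \Pi_{n+k}(M_j) \leq \dim B\)), with the covering property that for any choice of neighbourhoods \(U_j\) of \(0\) there is a finite \(J_0 \subset J\) such that \(\bigcup_{j \in J_0} \Pi_{n+k}(M_j \cap U_j)\) is a neighbourhood of \(0\) in \(B\), and such that, writing \(d_j = \dim M_j\), the map \(\Pi_n \restriction M_j\) has constant rank \(l_j\) and there is a strictly increasing \(\iota_j \colon \{1, \dots, d_j\} \to \{1, \dots, n+k+h_j\}\) with \(\iota_j(l_j) \leq n\) and \(\Pi_{\iota_j} \restriction M_j\) an immersion. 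We may discard the empty \(M_j\).

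Fix \(j \in J\) and consider two cases. If \(l_j < d_j\), then \(M_j\), viewed inside \(\real^{n+(k+h_j)}\), satisfies the hypotheses of Proposition \ref{local-fiber-cutting}, which yields a polydisk \(U_j = \polydisk{n+k+h_j}{\rho_j}\) and a simple sub-\(\Lambda\)-set \(A_j' \subset M_j\) with \(\dim A_j' < \dim M_j\) and \(\Pi_n(M_j \cap U_j) = \Pi_n(A_j')\). If instead \(l_j = d_j\), then \(\Pi_n \restriction M_j\) is a constant-rank map of rank \(d_j = \dim M_j\), hence an immersion with discrete fibres, so by the rank theorem \(\dim \Pi_n(M_j) = d_j\); since \(\Pi_n(M_j) \subset \Pi_n(B)\), this forces \(\dim M_j \leq \dim(\Pi_n(B)) < \dim B\). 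In this case apply Theorem \ref{basic-simple-sub-lambda-set} to the \(H\)-set \(M_j\) to get a polydisk \(U_j \subset \real^{n+k+h_j}\) with \(A_j' := M_j \cap U_j\) a simple sub-\(\Lambda\)-set, and here \(\Pi_n(M_j \cap U_j) = \Pi_n(A_j')\) trivially. In both cases \(A_j'\) is a simple sub-\(\Lambda\)-set contained in \(M_j\) with \(\dim A_j' < \dim B\) and \(\Pi_n(M_j \cap U_j) = \Pi_n(A_j')\); note that the hypothesis \(\dim(\Pi_n(B)) < \dim(B)\) is used only to handle the second case.

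Feeding these \(U_j\) into the covering property, we obtain a finite \(J_0 \subset J\) and a neighbourhood \(U \subset \real^{n+k}\) of \(0\) with \(\bigcup_{j \in J_0} \Pi_{n+k}(M_j \cap U_j) = B \cap U\), whence \(\Pi_n(B \cap U) = \bigcup_{j \in J_0} \Pi_n(M_j \cap U_j) = \bigcup_{j \in J_0} \Pi_n(A_j')\). For each \(j \in J_0\), Definition \ref{defn:simple-sub-sets} furnishes a \(\Lambda\)-set \(A_j \subset \real^{n+k+h_j+m_j}\) with \(\Pi_{n+k+h_j}(A_j) = A_j'\) and \(\Pi_{n+k+h_j} \restriction A_j\) of finite fibres. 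Relabelling \(J_0\) as \(\{1, \dots, p\}\), it remains to check the two bullet points, all routine: \(\dim A_j = \dim A_j' < \dim B\) by Remark \ref{rem:simple-sub-sets-dimension}; \(\Pi_{n+k}(A_j) = \Pi_{n+k}(A_j') \subset \Pi_{n+k}(M_j) \subset B\); the map \(\Pi_{n+k} \restriction A_j\) has finite fibres since it is the composite of \(\Pi_{n+k+h_j} \restriction A_j \colon A_j \to A_j'\), which has finite fibres, with \(\Pi_{n+k} \restriction M_j\), which is injective; and \(\Pi_n(A_j) = \Pi_n(A_j')\), so that \(\Pi_n(B \cap U) = \bigcup_{j} \Pi_n(A_j)\). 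The genuine work is all contained in Proposition \ref{local-fiber-cutting}; in the present argument the only delicate points are the case split above (the immersion case being where the dimension hypothesis bites) and the bookkeeping that passing from a simple sub-\(\Lambda\)-set to its witnessing \(\Lambda\)-set preserves dimension, finiteness of the fibres of \(\Pi_{n+k}\), and the containment of \(\Pi_{n+k}(A_j)\) in \(B\).
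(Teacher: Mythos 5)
Your proof is correct and follows essentially the same route as the paper: parametrize \(B\) via Proposition \ref{local-parametrization-constant-rank}, apply the Local Fiber Cutting Lemma to the relevant charts, extract a finite subfamily from the covering property, and replace each simple sub-\(\Lambda\)-set by its witnessing \(\Lambda\)-set. The only (harmless) difference is cosmetic: you split on \(l_j < d_j\) versus \(l_j = d_j\) where the paper splits on \(\dim M_j = \dim B\) versus \(\dim M_j < \dim B\), and you are in fact slightly more careful than the paper in the low-dimensional case by explicitly invoking Theorem \ref{basic-simple-sub-lambda-set} to restrict \(M_j\) to a polydisk on which it is a simple sub-\(\Lambda\)-set.
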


\begin{proof}
  The proof proceeds by first parametrizing \(H\)-basic sets
  using Proposition \ref{local-parametrization-constant-rank} and then
  applying the Local
  Fiber Cutting Lemma to each of these charts. The following diagram
  illustrates the situation.
  \begin{center}
    \begin{tikzcd}[column sep = large, row sep = large]
      A_{j} \arrow[']{d}{\Pi_{n+k+h_j}} &\dots &A_{j'} \arrow{d}{\Pi_{n+k+h_{j'}}} \\
      C_{j} \arrow[phantom, sloped]{d}{\subset} &\dots &C_{j'}
      \arrow[phantom, sloped]{d}{\subset}\\
      M_{j} \arrow[']{dr}{\Pi_{n+k}} &\dots &M_{j'} \arrow{dl}{\Pi_{n+k}}\\
      &B
    \end{tikzcd}
  \end{center}
  
  Consider \((M_j \subset \real^{n+k+h_j})_{j \in J}\) a parametrization of
  \(B\) obtained by Proposition
  \ref{local-parametrization-constant-rank} and fix \(j \in J\). If
  \(\dim(M_j) < \dim(B)\) then we let \(C_j = M_j\) and \(U_j =
  \real^{n+k+h_j}\). Notice in particular that \(\Pi_n(C_j) =
  \Pi_n(M_j \cap U_j)\). Now, assume
  that \(\dim(M_j) = \dim(B)\). Then, \(\Pi_n \restriction M_j\) has
  constant rank \(l = \dim(\Pi_n(M_j)) \leq \dim(\Pi_n(B)) < \dim(B) =
  \dim(M_j)\). Thus, we can apply the Local Fiber Cutting Lemma to
  find \(C_j \subset M_j\) a simple sub-\(\Lambda\)-set and \(U_j
  \subset \real^{n+k+h_j}\) a neighborhood of \(0\) such that
  \(\dim(C_j) < \dim(M_j)\) and \(\Pi_n(C_j) = \Pi_n(M_j \cap U_j)\). 
 
  Now, consider \(J_0 \subset J\) a finite subset such that
  \(V \coloneq \bigcup_{j \in J_0} \Pi_{n+k}(M_j \cap U_j)\) is a
  neighborhood of \(0\) in \(B\). We then have
  \[\Pi_n(V) = \bigcup_{j \in J_0} \Pi_n(M_j \cap U_j) =
    \bigcup_{j \in J_0} \Pi_n(C_j).\]
  For \(j \in J_0\), \(C_j\) is a simple sub-\(\Lambda\)-set so that
  there is
  a \(\Lambda\)-set \(A_j\) such that
  \(\Pi_{n+k+h_j}(A_j) = C_j\) and \(\dim(A_j) = \dim(C_j)\). In
  particular, \(\dim(A_j) < \dim(B)\) for \(j \in J_0\) and
  \[\Pi_n(V) = \bigcup_{j \in J_0} \Pi_n(A_j)\]
  whence the result. 
\end{proof}

\begin{cor}[Global Fiber Cutting]
  \label{global-fiber-cutting}
  Let \(B \subset \real^{n+k}\) be a \(\Lambda\)-set such that
  \(\dim(\Pi_n(B)) < \dim(B)\). Then, there are \(\Lambda\)-sets
  \(A_1, \dots, A_p\) such that
  \begin{itemize}
  \item For every \(1 \leq i \leq p\), there is some integer \(h_i
    \geq 0\) such that \(A_i \subset \real^{n+k+h_i}\), \(\dim(A_i)
    < \dim(B)\), \(\Pi_{n+k}(A_i) \subset B\) and \(\Pi_{n+k}
    \restriction A_i\) has finite fibers.
  \item We have
    \[\Pi_n(B) = \Pi_n(A_1) \cup \dots \cup \Pi_n(A_p).\]
  \end{itemize}
\end{cor}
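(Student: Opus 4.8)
The plan is to deduce this corollary from the preceding proposition by a compactness argument that reduces the global $\Lambda$-set $B$ to its local $H$-set structure at each point of $\clos B$. Since $B$ is bounded, $\clos B$ is compact. For each $a \in \clos B$, the fact that $B$ is $H$-semianalytic provides an open polydisk $W_a$ centered at $a$ such that $(B \cap W_a) - a$ is an $H$-set; writing it as a finite union of $H$-basic sets yields finitely many translates of $H$-basic sets $B^a_1, \dots, B^a_{s_a} \subset B \cap W_a$ with $\bigcup_t B^a_t = B \cap W_a$.

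Now fix $a$ and a piece $B^a_t$, and write $d = \dim B$. Note that $\dim \Pi_n(B^a_t) \leq \dim \Pi_n(B) < d$. If $\dim B^a_t = d$, then $B^a_t - a$ is an $H$-basic set satisfying the hypothesis of the proposition, so applying it and translating the conclusion back by $a$ gives $\Lambda$-sets $A^a_{t,1}, \dots, A^a_{t,r_t}$ and an open neighborhood $\Omega^a_t \subset W_a$ of $a$ with $\Pi_n(B^a_t \cap \Omega^a_t) = \bigcup_j \Pi_n(A^a_{t,j})$, each $A^a_{t,j}$ having dimension $< d$, finite $\Pi_{n+k}$-fibers, and $\Pi_{n+k}(A^a_{t,j}) \subset B^a_t$. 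If instead $\dim B^a_t < d$, I would apply Theorem \ref{basic-simple-sub-lambda-set} to the $H$-set $B^a_t - a$: after intersecting with a small enough bounded open polydisk $\Omega^a_t = a + P$ contained in $W_a$, the set $(B^a_t \cap \Omega^a_t) - a$ is a simple sub-$\Lambda$-set (using Proposition \ref{simple-prp} to intersect with the $\Lambda$-set $P$), and translating its witnessing $\Lambda$-set back by $a$ produces a single $\Lambda$-set $A^a_t$ with finite $\Pi_{n+k}$-fibers, $\Pi_{n+k}(A^a_t) = B^a_t \cap \Omega^a_t$, $\dim A^a_t = \dim(B^a_t \cap \Omega^a_t) < d$ (the dimension equality by Proposition \ref{dimension-fibers}), and $\Pi_n(A^a_t) = \Pi_n(B^a_t \cap \Omega^a_t)$. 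In either case I set $V_a = \bigcap_t \Omega^a_t$, a finite intersection, hence still an open neighborhood of $a$ inside $W_a$.

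Finally, $(V_a)_{a \in \clos B}$ is an open cover of the compact set $\clos B \supset B$, so finitely many $V_{a_1}, \dots, V_{a_m}$ already cover $B$; let $A_1, \dots, A_p$ enumerate all the $\Lambda$-sets produced above over these $m$ points. Each $A_l$ has dimension $< d = \dim B$, has finite $\Pi_{n+k}$-fibers, and satisfies $\Pi_{n+k}(A_l) \subset B$; since $\Pi_n = \Pi_n \circ \Pi_{n+k}$, this gives $\Pi_n(A_l) \subset \Pi_n(B)$, so $\bigcup_l \Pi_n(A_l) \subset \Pi_n(B)$. Conversely, because the $V_{a_i}$ cover $B$ and each $V_{a_i} \subset W_{a_i}$, one has $\Pi_n(B) = \bigcup_i \bigcup_t \Pi_n(B^{a_i}_t \cap V_{a_i})$, and since $V_{a_i} \subset \Omega^{a_i}_t$ each term $\Pi_n(B^{a_i}_t \cap V_{a_i})$ is contained in $\Pi_n(B^{a_i}_t \cap \Omega^{a_i}_t)$, which by construction is a union of some of the $\Pi_n(A_l)$. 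Hence $\Pi_n(B) = \bigcup_l \Pi_n(A_l)$, which finishes the proof.

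The main obstacle I anticipate is conceptual rather than computational: the proposition applies to $H$-basic sets, which are local objects at $0$, whereas a translate of an $H$-basic set need not be a $\Lambda$-set, so the low-dimensional local pieces cannot be reinserted into the machinery directly and must instead be upgraded to genuine $\Lambda$-sets via Theorem \ref{basic-simple-sub-lambda-set}. A secondary point that needs care is that invoking the proposition forces the working neighborhood around each point to shrink (from $W_a$ down to $V_a$); this does no harm, because shrinking only shrinks $\Pi_n(B \cap V_a)$ while the sets $\Pi_n(A_l)$ stay fixed and already lie in $\Pi_n(B)$, so nothing is lost when passing to the finite subcover.
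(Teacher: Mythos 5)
Your proof is correct and follows essentially the same route as the paper, whose own proof is just the one-line remark that the corollary follows from the preceding proposition together with the compactness of \(\clos B\). Your elaboration — translating to each point of \(\clos B\), splitting the local \(H\)-basic pieces according to whether they have full dimension (apply the proposition) or lower dimension (upgrade to a \(\Lambda\)-set via Theorem \ref{basic-simple-sub-lambda-set} and Proposition \ref{dimension-fibers}), then extracting a finite subcover — is exactly the argument the paper leaves implicit, and the case split you introduce is genuinely needed since the proposition's hypothesis \(\dim(\Pi_n(B))<\dim(B)\) can fail for the lower-dimensional local pieces.
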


\begin{proof}
  The result follows from the proposition above and the compactness of
  \(\clos B\). 
\end{proof}

\begin{remark}
  \label{rem:global-fiber-cutting-fibers-smaller}
  We require that \(\Pi_{n+k} \restriction A_i\) has finite fibers and
  that \(\Pi_{n+k}(A_i) \subset B\) so that, for every \(y \in
  \Pi_n(A_i)\), we have \(\dim((A_i)_y) \leq \dim(B_y)\). Indeed,
  consider \(y \in \Pi_n(A_i)\). We have \(\Pi_{n+k}((A_i)_y) \subset
  B_y\) and \(\Pi_{n+k} \restriction (A_i)_y\) has finite fibers so
  that, by Proposition \ref{dimension-fibers}, \(\dim((A_i)_y) =
  \dim(\Pi_{n+k}((A_i)_y)) \leq \dim(B_y)\). 
\end{remark}

We want to work with \(\Lambda\)-sets that respect the hypotheses of
Proposition \ref{dimension-fibers} in order to have better control
over their fibers. This
is the purpose of the following lemma, which is an immediate
consequence of Lemma \ref{global-parametrization}.

\begin{lem}
  \label{lambda-sets-constant-rank}
  Let \(B \subset \real^{n+k}\) be a \(\Lambda\)-set. Then, there are
  \(\Lambda\)-sets \(A_1, \dots, A_p\) such that
  \begin{itemize}
  \item For every \(1 \leq i \leq p\), there is some integer \(h_i
    \geq 0\) such that \(A_i \subset \real^{n+k+h_i}\) and
    \(\dim((A_i)_y)\) does not depend on \(y \in \Pi_n(A_i)\). 
  \item For each \(1 \leq i \leq p\), \(\Pi_{n+k} \restriction A_i\)
    has finite fibers and 
    \[B = \Pi_{n+k}(A_1) \cup \dots \cup \Pi_{n+k}(A_p).\]
  \end{itemize}  
\end{lem}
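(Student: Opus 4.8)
The plan is to read this off directly from the global parametrization of Lemma \ref{global-parametrization}. First I would apply that lemma to \(B\), obtaining simple sub-\(\Lambda\)-sets \(M_1, \dots, M_p\) that are also \(H\)-manifolds, with \(M_i \subset \real^{n+k+c_i}\) for suitable \(c_i \geq 0\), such that \(\Pi_{n+k} \restriction M_i\) is a diffeomorphism onto its image, such that \(B = \Pi_{n+k}(M_1) \cup \dots \cup \Pi_{n+k}(M_p)\), and --- crucially --- such that \(\Pi_n \restriction M_i\) has constant rank \(l_i\); write \(d_i = \dim M_i\). Since \(M_i\) is in particular a simple sub-\(\Lambda\)-set, I may choose for each \(i\) an integer \(m_i \geq 0\) and a witnessing \(\Lambda\)-set \(A_i \subset \real^{n+k+c_i+m_i}\) such that \(\Pi_{n+k+c_i}(A_i) = M_i\) and \(\Pi_{n+k+c_i} \restriction A_i\) has finite fibers. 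Setting \(h_i = c_i + m_i\), the claim is that \(A_1, \dots, A_p\) is the desired family.

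The second bullet is then immediate: \(\Pi_{n+k} \restriction A_i\) factors as \((\Pi_{n+k} \restriction M_i) \circ (\Pi_{n+k+c_i} \restriction A_i)\), which is a composition of a map with finite fibers and an injective map, hence has finite fibers; and \(\Pi_{n+k}(A_i) = \Pi_{n+k}(M_i)\), so that \(B = \Pi_{n+k}(A_1) \cup \dots \cup \Pi_{n+k}(A_p)\).

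For the first bullet, note that \(\Pi_n(A_i) = \Pi_n(M_i)\), and fix \(y \in \Pi_n(M_i)\). By the constant rank theorem, as recalled in Paragraph \ref{par:dimension}, the fiber \((M_i)_y\) is a nonempty submanifold of \(M_i\) of dimension \(d_i - l_i\), which does not depend on \(y\). The projection forgetting the last \(m_i\) coordinates maps \((A_i)_y\) onto \((M_i)_y\) with finite fibers, since \(\Pi_{n+k+c_i} \restriction A_i\) has finite fibers. Both \((A_i)_y\) and \((M_i)_y\) are simple sub-\(\Lambda\)-sets by Proposition \ref{simple-prp}, so Proposition \ref{dimension-fibers}, applied with \(\mu = 0\), gives \(\dim((A_i)_y) = \dim((M_i)_y) = d_i - l_i\), independently of \(y\). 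This gives the first bullet.

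There is essentially no obstacle here: the statement is bookkeeping on top of Lemma \ref{global-parametrization}. The only step that is not purely formal is the last one --- transferring the constant-fiber-dimension property from \(M_i\), where it follows from the constant rank of \(\Pi_n \restriction M_i\), across the finite-fibered projection to the witnessing \(\Lambda\)-set \(A_i\).
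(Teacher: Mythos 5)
Your proof is correct and follows exactly the route the paper intends: the paper gives no written proof, stating only that the lemma is an immediate consequence of Lemma \ref{global-parametrization}, and your argument is the natural elaboration of that remark (constant rank of \(\Pi_n \restriction M_i\) gives constant fiber dimension \(d_i - l_i\), and the finite-fibered witnessing projection from \(A_i\) onto \(M_i\) preserves fiber dimension via Proposition \ref{dimension-fibers}, or equivalently Remark \ref{rem:simple-sub-sets-dimension}).
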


\begin{thm}
  \label{sub-sets-are-simple}
  Let \(A \subset \real^n\) be a sub-\(\Lambda\)-set. Then, \(A\) is
  also a simple sub-\(\Lambda\)-set. 
\end{thm}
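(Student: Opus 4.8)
The plan is to prove the theorem by induction on $\dim A$, using the Global Fiber Cutting Lemma (Corollary~\ref{global-fiber-cutting}) to reduce the dimension of the fibers. Let $A \subset \real^n$ be a sub-$\Lambda$-set, so there is an integer $k \geq 0$ and a $\Lambda$-set $A' \subset \real^{n+k}$ with $\Pi_n(A') = A$. By Lemma~\ref{lambda-sets-constant-rank}, we may write $A'$ as a finite union of $\Pi_{n+k}$-images of $\Lambda$-sets whose fibers over $\Pi_n$ all have the same dimension; since simple sub-$\Lambda$-sets are closed under finite unions by Proposition~\ref{simple-prp}, and since taking such a $\Pi_{n+k}$-image with finite fibers does not change the dimension of the relevant fibers (Proposition~\ref{dimension-fibers}), it suffices to treat the case where $\dim(A'_y)$ is constant, say equal to $\mu$, for all $y \in \Pi_n(A') = A$. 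We argue by induction on $\mu$.

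If $\mu = 0$, then by Lemma~\ref{lem:dimension-0-finite} each fiber $A'_y$ is finite, so $\Pi_n \restriction A'$ has finite fibers and $A = \Pi_n(A')$ is a simple sub-$\Lambda$-set by definition (its existential definability in $\struct$ being clear since sub-$\Lambda$-sets are existentially definable). Now suppose $\mu > 0$ and the result holds for all smaller values. Then $\dim(\Pi_n(A')) = \dim(A) = \dim(A') - \mu < \dim(A')$, using the dimension formula from Proposition~\ref{dimension-fibers}. Apply the Global Fiber Cutting Lemma (Corollary~\ref{global-fiber-cutting}) to $B = A'$: there are $\Lambda$-sets $A_1, \dots, A_p$ with $A_i \subset \real^{n+k+h_i}$, $\dim(A_i) < \dim(A')$, $\Pi_{n+k}(A_i) \subset A'$, $\Pi_{n+k} \restriction A_i$ having finite fibers, and $\Pi_n(A') = \Pi_n(A_1) \cup \dots \cup \Pi_n(A_p)$. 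By Remark~\ref{rem:global-fiber-cutting-fibers-smaller}, for each $y \in \Pi_n(A_i)$ we have $\dim((A_i)_y) \leq \dim(A'_y) = \mu$. I must now be careful: the composite projection $\Pi_n \restriction A_i = (\Pi_n \restriction \real^{n+k}) \circ (\Pi_{n+k} \restriction A_i)$ realizes $\Pi_n(A_i)$ as a sub-$\Lambda$-set of $\real^n$, and since $\Pi_{n+k} \restriction A_i$ has finite fibers, the fibers $(A_i)_y$ over $\Pi_n$ have the same dimension as the fibers $\Pi_{n+k}((A_i)_y)$ over $\Pi_n$, which is at most $\mu$. Applying Lemma~\ref{lambda-sets-constant-rank} once more to each $A_i$ to make the fiber dimension constant (and noting this constant is $\leq \mu$), the key point is that in fact it must be strictly less than $\mu$ on each piece: if a piece had a $\Lambda$-set witness $A_i''$ with constant fiber dimension $\mu$ over $\Pi_n$, then $\dim(A_i'') = \dim(\Pi_n(A_i'')) + \mu$; but $\dim(\Pi_n(A_i'')) \leq \dim(\Pi_n(A')) = \dim(A') - \mu$, forcing $\dim(A_i'') \leq \dim(A')$, whereas we also need $\dim(A_i'') < \dim(A')$ — this is where I must track the strict inequality $\dim(A_i) < \dim(A')$ carefully to exclude the boundary case, possibly by observing $\dim(\Pi_n(A_i)) < \dim(\Pi_n(A'))$ or by a further dimension count. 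Once the fibers are known to have dimension $< \mu$, the induction hypothesis applies to each $\Pi_n(A_i)$, showing it is a simple sub-$\Lambda$-set, and then $A = \Pi_n(A_1) \cup \dots \cup \Pi_n(A_p)$ is simple by Proposition~\ref{simple-prp}.

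The main obstacle I anticipate is precisely the bookkeeping in the inductive step around the strict decrease of fiber dimension: the Global Fiber Cutting Lemma guarantees $\dim(A_i) < \dim(A')$, and Remark~\ref{rem:global-fiber-cutting-fibers-smaller} gives $\dim((A_i)_y) \leq \mu$, but to feed the induction I need the fiber dimension to drop to $\leq \mu - 1$ on each piece after a further application of Lemma~\ref{lambda-sets-constant-rank}. The resolution should be that if some piece retained a fiber of dimension $\mu$, the dimension formula $\dim = \dim(\text{base}) + \mu$ together with $\dim(\text{base}) \leq \dim(\Pi_n(A'))$ and $\dim(A_i) < \dim(A') = \dim(\Pi_n(A')) + \mu$ would be contradictory — so the strict inequality on total dimension exactly buys the strict decrease in fiber dimension, provided one has also arranged that $\Pi_n(A_i)$ does not gain dimension, which holds since $\Pi_n(A_i) \subset \Pi_n(A') = A$. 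Assembling these inequalities cleanly is the crux; everything else is an application of the stability properties in Proposition~\ref{simple-prp} and the dimension lemmas of Paragraph~\ref{par:dimension}.
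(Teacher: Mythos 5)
Your overall strategy (normalize fiber dimensions with Lemma \ref{lambda-sets-constant-rank}, apply the Global Fiber Cutting Lemma, and induct downwards on fiber dimension) is the right one, but the induction as you set it up --- on $\mu$ alone --- does not close, and the ``resolution'' you sketch at the end is not actually a contradiction. Suppose a piece $A_i$ produced by Corollary \ref{global-fiber-cutting} still has constant fiber dimension $\mu$ over $\Pi_n$. Then $\dim(A_i) = \dim(\Pi_n(A_i)) + \mu$, and the inequalities you list, namely $\dim(\Pi_n(A_i)) \leq \dim(\Pi_n(A'))$ and $\dim(A_i) < \dim(A') = \dim(\Pi_n(A')) + \mu$, combine only to give $\dim(\Pi_n(A_i)) < \dim(\Pi_n(A'))$. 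That is perfectly consistent: the total dimension of $A_i$ can drop because its base shrinks while its fibers keep dimension $\mu$. (At least one piece must have base of full dimension $\dim(\Pi_n(A'))$, since the $\Pi_n(A_i)$ cover $\Pi_n(A')$, and for that piece the fiber dimension genuinely drops below $\mu$; but the remaining pieces may retain $\mu$-dimensional fibers over a lower-dimensional base.) So after one round of fiber cutting you cannot invoke an induction hypothesis indexed by $\mu$ on every piece, and your induction has no guarantee of terminating.

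The fix is exactly the quantity your computation surfaces. The paper inducts on the lexicographically ordered pair $(\mu, d)$, where $\mu$ is the maximal (constant) fiber dimension among the pieces $B_i$ and $d$ is the maximal dimension of $\Pi_n(B_i)$ over the pieces attaining $\mu$. One application of global fiber cutting to a piece with $\mu_i = \mu$ and $d_i = d$ replaces it by pieces on which either the fiber dimension drops below $\mu$, or it stays equal to $\mu$ but the base dimension drops below $d$ --- precisely the inequality $\dim(\Pi_n(A_i)) < \dim(\Pi_n(A'))$ you derived. Since the set of such pairs is well ordered, the procedure terminates with $\mu = 0$, at which point Lemma \ref{lem:dimension-0-finite} finishes the argument as in your base case. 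With this change of induction variable the rest of your argument goes through.
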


\begin{proof}
  The following diagram illustrates the proof below. 
  \begin{center}
    \begin{tikzcd}[row sep = large]
      &C_1 \arrow[']{dr}{\Pi_{n+k}} &\dots &C_q \arrow{dl}{\Pi_{n+k}} \\
      B_1 \arrow[']{drr}{\Pi_n} &\dots &B_i \arrow{d}{\Pi_n} &\dots
      &B_p \arrow{dll}{\Pi_n} \\
      &&A
    \end{tikzcd}
  \end{center}
  
  It suffices to show that there are \(\Lambda\)-sets \(B_1, \dots,
  B_p\) such that \(\Pi_n \restriction B_i\) has finite fibers and \(A
  = \Pi_n(B_1) \cup \dots \cup \Pi_n(B_p)\). Thus, consider \(B_1,
  \dots, B_p\) some \(\Lambda\)-sets such that \(A = \Pi_n(B_1) \cup
  \dots \cup \Pi_n(B_p)\). By applying Lemma
  \ref{lambda-sets-constant-rank}, we may assume
  that, for every \(1 \leq i \leq p\), the dimension \(\mu_i \coloneq
  \dim((B_i)_y)\) does not depend on \(y \in \Pi_n(B_i)\). Define also
  \(d_i = \dim(\Pi_n(B_i))\) for each \(1 \leq i \leq p\). Finally,
  consider \(\mu = \max(\mu_1, \dots, \mu_p)\) and \(d = \max\{d_i \for \mu_i
  = \mu\}\) and assume that \(B_1, \dots, B_p\) have been chosen in such
  a way that the pair \((\mu, d)\) is minimal for the lexicographic
  order. To prove the result, we only need to show that \(\mu =
  0\) by Lemma \ref{lem:dimension-0-finite}. Thus, assume
  by contradiction that \(\mu > 0\).

  Let \(1 \leq i \leq p\) such that \(\mu_i = \mu\) and \(d_i =
  d\). For the rest of the proof, we will write \(B = B_i\). By Proposition
  \ref{dimension-fibers}, we have \(\dim(B) = d + \mu > d\) so
  that, by Corollary \ref{global-fiber-cutting}, there are \(C_1,
  \dots, C_{q}\) some \(\Lambda\)-sets such that
  \[\Pi_n(B) = \Pi_n(C_1) \cup \dots \cup \Pi_n(C_{q})\]
  and \(\dim (C_j) < \dim(B)\) for \(1 \leq j \leq q\). By Lemma
  \ref{lambda-sets-constant-rank}, we can assume without loss of
  generality that, for every \(1 \leq j
  \leq q\), the dimension \(\mu_j \coloneq \dim((C_j)_y)\) does not
  depend on \(y \in \Pi_n(C_j)\). Define
  also \(d_j = \dim(\Pi_n(C_j))\) for \(1 \leq j \leq q\). Then,
  fix \(1 \leq j \leq q\). By Remark
  \ref{rem:global-fiber-cutting-fibers-smaller}, we have \(\mu_j \leq
  \mu\).
  Assume that \(\mu_j = \mu\), then \(\dim(C_j) =
  \mu_j + d_j = \mu + d_j\) so that \(\mu + d > \mu+d_j\) whence
  \(d_j < d\). Thus, we contradict the minimality of the pair \((\mu,
  d)\) by
  replacing \(B\) with the family \(C_1, \dots, C_q\) and by repeating
  the same operation for each \(B_i\) such that \(\mu_i = \mu\) and
  \(d_i = d\). 
\end{proof}

\begin{remark}
  The proof can be seen as a procedure that takes a representation of
  the form \(A = \Pi_n(B_1) \cup \dots \cup \Pi_n(B_p)\) and that
  produces a new one such that the pair \((\mu, d)\) decreases. Since
  this procedure can be applied as long as \(\mu > 0\), we can apply
  it repeatedly until \(\mu = 0\). Also, the set of pairs \((\mu, d)\)
  is well ordered so that the process must stop eventually. 
\end{remark}

The theorem along with Lemma \ref{global-parametrization} show that
every \(\Lambda\)-set has the \(\Lambda\)-Gabrielov property
(see \cite[p. 4380]{vdDries-Speissegger-generalized-power-series}). Thus,
if \(A, B \subset \real^n\) are two sub-\(\Lambda\)-sets, the set \(A
\setminus B\) is also a sub-\(\Lambda\)-set by Gabrielov's Theorem of
the complement \cite[Theorem
2.7]{vdDries-Speissegger-generalized-power-series}. Now, consider
\(\tau \colon \real \to (-1, 1)\) the definable map given by
\[\tau(x) = \frac{x}{\sqrt{1 + x^2}}.\]

\begin{thm}
  \label{thm:omin}
  A set \(A \subset \real^n\) is \(\struct\)-definable if and only if
  \(\tau^n(A) \subset (-1, 1)^n\) is a
  sub-\(\Lambda\)-set. Furthermore, the structure \(\struct\) is
  o-minimal and model complete. 
\end{thm}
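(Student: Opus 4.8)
The plan is to transfer everything into the bounded world via the semialgebraic strictly increasing homeomorphism $\tau\colon\real\to(-1,1)$ — whose inverse $y\mapsto y/\sqrt{1-y^2}$ is again semialgebraic — and to show that the family
\[
  \mathcal{S}_n=\bigl\{A\subset\real^n\for \tau^n(A)\text{ is a sub-}\Lambda\text{-set}\bigr\},\qquad \tau^n=\tau\times\dots\times\tau,
\]
is exactly the family of $\struct$-definable sets, with every member of it in fact existentially definable. The first task is to verify that $\mathcal{S}=(\mathcal{S}_n)_n$ is a \emph{structure}. Every bounded semialgebraic set is a $\Lambda$-set, since polynomials lie in $\germalg n$, so semialgebraic sets are $H$-semianalytic; in particular $(-1,1)^n$ is a $\Lambda$-set. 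As $\tau^n$ is a bijection onto $(-1,1)^n$ it commutes with Boolean operations relative to $(-1,1)^n$, so closure of $\mathcal{S}$ under $\cup$ and $\cap$ follows from the same properties of sub-$\Lambda$-sets, and closure under complement follows from the fact — recalled just before the statement — that sub-$\Lambda$-sets are closed under set-theoretic difference (via Gabrielov's theorem of the complement, Theorem~\ref{sub-sets-are-simple}, and Lemma~\ref{global-parametrization}). Since $\tau$ acts coordinatewise it also intertwines projections, products, coordinate permutations and diagonal identifications with the corresponding operations on sub-$\Lambda$-sets, all of which preserve them (Definition~\ref{defn:H-basic}, Proposition~\ref{simple-prp}); and $\mathcal{S}$ contains every semialgebraic set, because the $\tau^n$-image of such a set is semialgebraic by the Tarski–Seidenberg theorem and is contained in $(-1,1)^n$.

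The main obstacle, I expect, is showing that $\mathcal{S}$ contains the graph $\graph{H^{(k)}}\subset\real^2$ of each function symbol $H^{(k)}$ of $\struct$. I would split $\real$ into $\{|x|\geq c\}$ and $\{|x|\leq c\}$ for a small $c>0$. Away from $0$ the function $H$, hence each $H^{(k)}$, is piecewise polynomial by Theorem~\ref{thm:properties-H}, so $\graph{H^{(k)}}\cap(\{|x|\geq c\}\times\real)$ is semialgebraic and its $\tau^2$-image is bounded semialgebraic, hence a $\Lambda$-set. Near $0$ I would first establish that $H^{(k)}\in\germalg 1$ for every $k$: indeed $H\in\germalg 1$, and whenever $g\in\germalg 1$ the germ $(x_1,x_2)\mapsto\bigl(g(x_1+x_2)-g(x_1)\bigr)/x_2$ lies in $\germalg 2$ (compose $g$ with the polynomial germs $x_1+x_2$ and $x_1$, then apply monomial division by $x_2$, since the numerator vanishes on $\{x_2=0\}$), and its restriction to $\{x_2=0\}$ — which, being the continuous extension, equals $g'$ — lies in $\germalg 1$ by the composition axiom; induction on $k$ finishes. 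Then Lemma~\ref{graphs} yields $c>0$ for which $\graph{H^{(k)}\restriction\polydisk 1 c}$ is a simple sub-$\Lambda$-set, and since $\tau$ is semialgebraic with bounded image its $\tau^2$-image is again a sub-$\Lambda$-set (realize it as a projection of the intersection of $\graph{H^{(k)}\restriction\polydisk 1 c}\times\real^2$ with the bounded semialgebraic set $\{(x,y,u,v)\for u=\tau(x),\ v=\tau(y)\}$). Taking the union of the two pieces shows $\graph{H^{(k)}}\in\mathcal{S}_2$.

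Granting this, I would conclude as follows. Every $\struct$-definable set lies in $\mathcal{S}$: the solution set of an atomic formula is a projection of a finite intersection of (suitably cylindrified and permuted) graphs of $+,-,\cdot$ and of the $H^{(k)}$ together with semialgebraic sets, all in $\mathcal{S}$, and $\mathcal{S}$ is closed under conjunction, negation and existential quantification, so a structural induction on formulas — with parameters carried along as extra free variables — gives the inclusion. Conversely, if $\tau^n(A)$ is a sub-$\Lambda$-set then it is simple by Theorem~\ref{sub-sets-are-simple}, hence existentially definable in $\struct$ by the definition of a simple sub-$\Lambda$-set; since the relation $u=\tau(x)$ is quantifier-free definable over $\real_\text{alg}$ (it reads $u^2(1+x^2)=x^2\wedge ux\geq 0$), the set $A=\{x\for\tau^n(x)\in\tau^n(A)\}$ is existentially $\struct$-definable. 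This establishes the claimed equivalence, and also shows that every $\struct$-definable set — uniformly in its parameters — is existentially definable; since such a uniform equivalence is expressible by a sentence true in $\struct$, it holds in the theory of $\struct$, which is therefore model complete. Finally, for o-minimality, if $A\subset\real$ is $\struct$-definable then $\tau(A)$ is a sub-$\Lambda$-set, hence a continuous image of a $\Lambda$-set, hence has finitely many connected components by Corollary~\ref{cor:lambda-finite-components}; as $\tau^{-1}$ is an increasing homeomorphism, $A$ is a finite union of points and open intervals.
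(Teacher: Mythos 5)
Your proposal is correct and follows essentially the same route as the paper: transfer to $(-1,1)^n$ via $\tau$, show the resulting family is a structure (using Theorem \ref{sub-sets-are-simple}, Lemma \ref{global-parametrization} and Gabrielov's theorem for complements), identify its definable sets with those of \(\struct\), and deduce o-minimality from Corollary \ref{cor:lambda-finite-components} and model-completeness from the existential definability built into simple sub-\(\Lambda\)-sets. The only (harmless) divergence is that you treat each \(H^{(k)}\) separately by first proving \(H^{(k)}\in\germalg 1\), whereas the paper observes that \(\struct\) and \(\langle\real_{\mathrm{alg}};H\rangle\) have the same definable sets and only handles \(H\), obtaining the graph near \(0\) directly as a \(\Lambda\)-set rather than via Lemma \ref{graphs}.
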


\begin{proof}
  Consider \(\mathcal{D}_n\) to be the collection of subsets
  \(A \subset \real^n\) such that \((\tau^n)(A)\) is a
  sub-\(\Lambda\)-set. This collection is closed under
  unions and intersections. Furthermore, if \(A \subset (-1, 1)^n\) is a
  sub-\(\Lambda\)-set, so is \((-1, 1)^n \setminus A\) so that
  \(\mathcal{D}_n\) is also closed under complement. Finally, if \(A \in
  \mathcal{D}_{n+k}\), it is clear that \(\Pi_n(A) \in
  \mathcal{D}_n\). Thus, the collection \(\mathcal{D} =
  (\mathcal{D}_n)_{n \geq 0}\) is the collection of definable sets of a
  certain structure on \(\real\) which we write
  \(\real_\mathcal{D}\).

  We now need to argue that \(\mathcal{D}\) is exactly the
  collection of definable sets of the structure \(\struct\). Firstly,
  every sub-\(\Lambda\)-set is clearly definable in \(\struct\) so that
  every set \(A \in \mathcal{D}_n\) must also be definable in
  \(\struct\). Since \(\tau\) is order-preserving, the set
  \(\tau \times \tau(\graph <)\) is the graph of the order on \((-1, 1)^n\)
  which is clearly a sub-\(\Lambda\)-set so that \(\graph < \in
  \mathcal{D}_2\). Also, given \(x, y, z \in (-1, 1)\), we have
  \begin{align*}
    \tau^{-1}(x) + \tau^{-1}(y) = \tau^{-1}(z) \iff \exists &u, v, w >
    0, \\
    &(u^2, v^2, w^2) = (1-x^2, 1-y^2, 1-z^2) \\
    \land\  &xvw + yuw = zuv
  \end{align*}\
  Thus, \(\graph + \in \mathcal{D}_3\). Similarly, it is easy to show
  that \(\graph -, \graph \cdot \in \mathcal{D}_3\). Therefore, every
  polynomial is definable in the structure \(\real_\mathcal{D}\). In
  particular, the restriction of \(H\) to the complement of any
  neighborhood of \(0\) is
  \(\real_\mathcal{D}\)-definable. Thus, in order to prove that
  \(\real_\mathcal{D}\) and \(\struct\) have the same definable sets,
  it suffices to show that the restriction of \(H\) to some
  neighborhood of \(0\) is definable in \(\real_\mathcal{D}\).

  Up to considering \(H - H(0)\), we may as well assume that \(H(0)
  = 0\). In this case, the function \(\tau^{-1} \circ H \circ \tau\)
  is well defined in a neighborhood of \(0\). Furthermore,
  \(\tau \times \tau(\Gamma(\tau^{-1} \circ H \circ \tau))\) is the graph of \(H\)
  restricted to some neighborhood of \(0\). Since this is a
  \(\Lambda\)-set, it means that \(\tau^{-1} \circ H \circ \tau\)
  is \(\real_\mathcal{D}\)-definable. Finally, it is clear that
  \(\tau\) is a \(\real_\mathcal{D}\)-definable function so that
  \(\tau^{-1}\) is also \(\real_\mathcal{D}\)-definable. Putting
  everything together, we find that \(H\) is
  \(\real_\mathcal{D}\)-definable whence the result. 
  
  Since \(\tau\) is a homeomorphism and every sub-\(\Lambda\)-set has
  only finitely many connected components by Corollary
  \ref{cor:lambda-finite-components}, it follows that every
  \(\struct\)-definable set also has only finitely many connected
  components. Thus, \(\struct\) is o-minimal. Furthermore, consider
  \(A \subset \real^n\) a definable set. Then, there is \(A' \subset
  \real^{n+k}\) a \(\Lambda\)-set such that \(\Pi_n(A') =
  \tau^n(A)\). Then, \(A'\) is existentially definable by Proposition
  \ref{basic-simple-sub-lambda-set} so that the set
  \[B = \{(y, x) \in \real^{n+k} \times \real^{n+k} \for \tau^{n+k}(y)
    = x, x \in A\}\]
  is also existentially definable. Furthermore, \(\Pi_n(B) = A\) so
  that \(\struct\) is also model complete. 
\end{proof}
\appendix

\section{A counter example}
\label{appendix}

In this Appendix, we will follow the proof of \cite[Lemma
3.7]{legal-rolin-not-c-infty} in a special case. We will see that the
set \(A\) obtained in this way has empty germ at \(0\). This is an
issue as the proof of \cite[Proposition 3.8]{legal-rolin-not-c-infty}
freely restricts such sets \(A\) to arbitrary neighborhoods of \(0\). 

Let \(r = (1, 1)\) and 
\[M = \{(x, y) \in \polydisk 2 r \for y > x\}.\] 
Then, the projection \(\Pi_1\) on the \(x\) coordinate has constant
rank. As in the proof of Lemma 3.7, we define \(\varphi \colon I_r \to
\real\) as
\[\varphi(x, y) = (y-x)(1-x^2)(1-y^2)\]
and we let \(A = \{(x, y) \in M \for \varphi \restriction M_x \text{
  is critical at } y\}\). We can then compute
\[A = \left\{(x, y) \in M \for y = \frac{1}{3} x \pm
  \frac{\sqrt{x^2+3}}{3}\right\}.\]
There is \(\varepsilon > 0\) such that, for \(x \in (-\varepsilon,
\varepsilon)\), we have
\[ \abs{\frac{1}{3}x \pm \frac{\sqrt{x^2 +3}}{3}} > \frac{\sqrt
    2}{3}\]
Thus, \(A \cap \polydisk 2 {r'} = \varnothing\) where \(r' =
\left(\varepsilon, \frac{\sqrt 2}{3}\right)\).

\end{document}